\newtheoremstyle{thm}
     {.8ex plus .3ex minus .1ex}
     {.8ex plus .3ex minus .1ex}
     {\itshape}
     {}
     {\bfseries}
     {---}
     {0em}
     {$\bullet$\hbox{\ }#1\hbox{\ }#2}
\theoremstyle{thm}
\newtheorem{definition}{Definition}[section]
\newtheorem{theorem}[definition]{Theorem}
\newtheorem{lemma}[definition]{Lemma}
\newtheorem{proposition}[definition]{Proposition}
\newtheorem{corollary}[definition]{Corollary}
\newcommand{\etat}[1]{*+<1em,1em>[o][F-]{\makebox[1.5em]{\strut $#1$}}}
\renewcommand{\1}[1]{\mathbf{1}_{\{#1\}}}
\newcommand{\tq}{\;:\;}
\newcommand{\iid}{{\normalfont\slshape\textsf{\small i.i.d.}}}
\newcommand{\lub}{{\normalfont\slshape\textsf{\small l.u.b.}}}
\newcommand{\indep}{\parallel}
\newcommand{\up}[1]{\,\uparrow\! #1}
\newcommand{\seq}[2]{(#1_{#2})_{#2\geq0}}
\newcommand{\NN}{\mathbb{N}}
\newcommand{\ZZ}{\mathbb{Z}}
\newcommand{\pr}{\mathbb{P}}
\newcommand{\esp}{\mathbb{E}}
\newcommand{\prq}{\mathbb{Q}}
\newcommand{\A}{\mathcal{A}}
\newcommand{\C}{\mathscr{C}}
\newcommand{\Cstar}{\mathfrak{C}}
\newcommand{\RR}{\mathbb{R}}
\newcommand{\D}{\mathcal{D}}
\newcommand{\FFF}{\mathfrak{F}}
\newcommand{\GGG}{\mathfrak{G}}
\newcommand{\slgb}{\mbox{$\sigma$-al}\-ge\-bra}
\newcommand{\pas}{\text{$\pr$-a.s.}}
\newcommand{\M}{\mathcal{M}}
\newcommand{\muc}{\mu_{\M_c}}
\newcommand{\ECF}{\mbox{\normalfont\sffamily xCF}}
\newcommand{\R}{\mathscr{R}}
\renewcommand{\H}{\mathcal{H}}
\newcommand{\W}{\mathcal{W}}
\newcommand{\rest}[1]{\bigl|_{#1}}
\newcommand{\height}{\tau}
\newcommand{\cut}{\kappa}
\newcommand{\pirreducible}{\mbox{irre}\-\mbox{ducible}}
\newcommand{\B}{\partial}
\newcommand{\MM}{\mathbf{M}}
\newcommand{\speed}{\gamma}
\newcommand{\rev}{\mathsf{rev}}
\newcommand{\wbar}{\widehat{w}}
\newcommand{\goodname}{M\"{o}bius}
\newcommand{\Goodname}{M\"{o}bius}
\newcommand{\un}{\bm{1}}
\begin{document}

\mainmatter

\begin{center}
\huge\bfseries\sffamily
Uniform and Bernoulli measures\\
on the boundary of trace monoids
\end{center}

\medskip

\begin{tabular}{cc}
  \Large\sffamily Samy Abbes&\Large\sffamily Jean Mairesse\\
  \small University Paris Diderot -- Paris~7&\small CNRS, LIP6 UMR 7606\\
  \small CNRS Laboratory PPS (UMR 7126)&\small Sorbonne Universit\'es\\
  \small Paris, France&\small  UPMC Univ Paris 06, France\\
  \small
  \ttfamily\footnotesize  samy.abbes@univ-paris-diderot.fr
&\ttfamily\footnotesize jean.mairesse@lip6.fr
\end{tabular}

\begin{abstract}
  \vspace{-1em} Trace monoids and heaps of pieces appear in various
  contexts in combinatorics. They also constitute a model used in
  computer science to describe the executions of asynchronous
  systems. The design of a na\-tu\-ral probabilistic layer on top of
  the model has been a long standing challenge. The difficulty comes
  from the presence of commuting pieces and from the absence of a
  global clock.  In this paper, we introduce and study the class of
  \emph{Bernoulli} probability measures that we claim to be the
  simplest adequate probability measures on infinite traces. For this,
  we strongly rely on the theory of trace combinatorics with the
  M\"{o}bius polynomial in the key role. These new measures provide a
  theoretical foundation for the probabilistic study of concurrent
  systems.
\end{abstract}

\section{Introduction}
\label{sec:introduction-1}

Trace monoids are finitely presented monoids with commutation
relations between some generators, that is to say, relations of the
form $a\cdot b= b \cdot a$.

Trace monoids have first been studied in combinatorics under the name
of partially commutative monoids~\cite{cartier69}. It was noticed by
Viennot that trace monoids are ubiquitous both in combinatorics and in
statistical physics~\cite{viennot86}. Trace monoids have also
attracted a lasting interest in the computer science community, since
it was realized that they constitute a model of concurrent systems
which are computational systems featuring parallel actions; typically,
parallel access to distributed databases or parallel events in
networked systems~\cite{diekert90,diekert95}. In a nutshell, the
co-occurrence of parallel actions corresponds to the commutation
between generators in the trace monoid.  The relationship with other
models of concurrency has been extensively
studied~\cite{rozoy91,winskel95}. In particular, in most concurrency
models, the executions can be described as regular trace languages,
that is to say, regular subsets of trace monoids. Hence trace monoids
are among the most fundamental objects of concurrency theory.

\smallskip

There are several motivations for adding a probabilistic layer on top
of trace monoids.  In the concurrent systems context, it is relevant
for network dimensioning and performance
evaluation~\cite{saheb89,krob03}.  It is also a question that has been
considered for general combinatorial structures since the 80's, and
which is crucial for the design of random sampling
algorithms~\cite{jerrum86}. Consider for instance the model checking
of asynchronous systems.  Such systems are known to suffer from the
``state-space explosion'' problem.  So it is in practice impossible to
check for all the trajectories. The key idea in \emph{statistical
  model checking} is to design testing procedures, relying on random
sampling, that provide quantitative guarantees for the fair
exploration of trajectories.  In this paper, we design a relevant
probabilistic layer at the level of the full trace monoid. This is a
first and necessary step, which has to be thoroughly understood,
before pushing the analysis further towards the regular trace
languages describing the trajectories of concurrent systems.

\medskip

The elements of a trace monoid are called traces. Traces can be seen
as an extended notion of words, where some letters are allowed to
commute with each other. Traces carry several notions which are
transposed from words. In particular, traces have a natural notion of
length, the number of letters in any representative word, and they are
partially ordered by the prefix relation inherited from words.  A
trace monoid can be embedded into a compact metric space where the
boundary elements are \emph{infinite traces}, which play the same role
with respect to traces than infinite words play with respect to words.

\medskip

One of our goals is to design a natural and effective notion of
``uniform'' probability measure on \emph{infinite traces}.  Let us
illustrate the difficulties that have to be overcome.

\medskip

\noindent\textbf{An elementary challenge with no elementary solution.} 
Consider the basic trace monoid $\M=\langle a,b,c\;|\; a\cdot b=b\cdot
a\rangle$. For $u\in \M$, denote by $\up u$ the set of all infinite
traces for which $u$ is a possible prefix. Does there exist a
probability $\pr$ on infinite traces which is \emph{uniform}, that is,
which satisfies: $\pr(\up u)=\pr(\up v)$ if $u$ and $v$ have the same
length~?

A first attempt consists in performing a random walk on the trace
monoid (see \cite{VNBi}). Draw a random sequence of letters in
$\{a,b,c\}$, the letters being chosen independently and with
probabilities $p_a,p_b,p_c$. Then consider the infinite trace obtained
by concatenating the letters.  We invite the reader to check by hand
that the probability measure thus induced on infinite traces does not
have the uniform property, whatever the choice of $p_a,p_b,p_c$\,.

A second attempt consists in considering the well-defined sequence
$(\nu_n)_{n\geq0}$, where $\nu_n$ is the uniform measure on the finite
set $\M_n=\{u\in\M\tq |u|=n\}$ of traces of length~$n$. Observe that
there are three traces of length~$1$, which are~$a$, $b$~and~$c$, and
eight traces of length~$2$, obtained from the collection of nine words
of length $2$ on $\{a,b,c\}$ by identifying the two words $ab$
and~$ba$. In particular, we have:
\begin{gather*}
1/3 =\nu_1(a)\;\neq\;\nu_2(aa)+\nu_2(ab)+\nu_2(ac)=3/8\,.
\end{gather*}
Hence the pair $(\nu_1,\nu_2)$ is \emph{not consistent}; and neither
is the pair $(\nu_n,\nu_{n+1})$ for all $n>0$.  There is therefore no
probability measure on infinite traces that induces the family
$(\nu_n)_{n\geq0}$\,.

We seemingly face the Cornelian
dilemma of choosing between consistent but non-uniform probabilities (first attempt),
or uniform but non consistent probabilities (second attempt).  

\medskip

\noindent\textbf{Results.} In this paper, we prove the existence and uniqueness of
the uniform probability measure $\pr$ for any irreducible trace
monoid---irreducibility corresponds to a connectedness property.  The
above dilemma is solved by playing on a variable which was thought to
be fixed: the total mass of finite marginals.  Indeed, the uniform
\emph{probability} on infinite traces induces a uniform \emph{measure}
on traces of a given length \emph{whose mass exceeds~$1$}.  As for the
consistency conditions, they do not hold and they are replaced by
compatibility conditions based on the inclusion-exclusion principle.

The uniform measure has the remarkable property of satisfying
$\pr\bigl(\up(u\cdot v)\bigr)=\pr(\up u)\pr(\up v)$ for any traces
$u,v\in\M$. More generally, we call Bernoulli measure any probability
measure satisfying this identity, which corresponds to a memoryless
property on traces. We exhibit infinitely many Bernoulli measures and
characterize all of them by means of a finite family of
intrinsic parameters obeying polynomial equations.  Furthermore,
we establish a realization theorem by proving that Bernoulli measures
correspond to some particular Markov chains on the Cartier-Foata
decomposition of traces. This
realization result is a basis for further work on algorithmic
sampling, a task that has not been tackled in the
literature so far. 

The M\"obius polynomial associated with the trace monoid appears in
all the results.  For instance, we establish that the uniform measure
satisfies $\pr(\up u)=p_0^{|u|}$, for all $u\in \M$, where $p_0$ is
the unique root of smallest modulus of the M\"obius polynomial. Also,
in the realization result for Bernoulli measures, the relationship
between the intrinsic parameters and the transition matrix of the
Markov chain is based on a general M\"obius transform in the sense of
Rota~\cite{rota64}. This highlights the deep combinatorial structure
of the probabilistic objects that we construct.

\medskip

\noindent\textbf{Related work.}
The uniform measure that we construct is closely related to two
classical objects: the Parry measure and the Patterson-Sullivan
measure. The \emph{Parry measure} is the measure of maximal entropy on
a sofic subshift, that is, roughly, the ``uniform'' measure on the
infinite paths in a finite automaton~\cite{lind95}. Traces can be
represented by their Cartier-Foata decompositions which are recognized
by a finite automaton having an associated Parry measure.  The
limitation in this approach is that the link with the combinatorics of
the trace monoid remains hidden in the construction. In a sense, our
results reveal the inherent combinatorial structure of the Parry
measure.  The \emph{Patterson-Sullivan measure} is also a uniform
measure, which is classically constructed on the boundary at infinity
of some geometric groups~\cite{kaimanovich90}. The proof of its
existence is non-constructive and is based on the Poincar\'e series of
the group, which, in the context of the trace monoid, is simply
$\sum_{u\in \M}z^{|u|}$\,. Using that the Poincar\'e series of $\M$ is
the inverse of the M\"obius polynomial, we get an explicit and
combinatorial identification of the Patterson-Sullivan measure for
trace monoids. Hence our results provide the first discrete framework,
outside trees~\cite{coornaert93}, where the Patterson-Sullivan measure
is explicitly identified.

Our approach radically differs from the probabilistic techniques found
in the computer science literature and related to concurrent systems,
such as Rabin's probabilistic automata~\cite{rabi63} and their
variants, probabilistic process algebra~\cite{hill96}, or stochastic
Petri nets~\cite{haas}.  All these approaches rely first on a
transposition of the asynchronous system into a sequential one, after
which a Markov chain structure is typically added. In contrast, we
consider the randomization of the elements involving parallelism, and
not sequentializations of those elements.

\medskip

\noindent\textbf{Organization of the paper.}
The paper is organized in four sections.
Section~\ref{part:framework-results} exposes the framework and
contains the statements of the results, with no proofs.
Section~\ref{sec:exampl-appl} illustrates the results through a study
of two concrete examples. It also provides a first immediate
application of our constructions to the computation of the ``speedup''
of trace monoids.  Section~\ref{part:auxiliary-tools} introduces
auxiliary tools.  Section~\ref{part:proofs-results} is devoted to the
proofs of the results stated in Section~\ref{part:framework-results}.
Last, a concluding section provides perspectives for future work.

\section{Framework and results}
\label{part:framework-results}

\subsection{Trace monoids and their boundary}
\label{sec:trace-monoids-their}

An \emph{independence pair} is an ordered pair $(\Sigma,I)$, where
$\Sigma$ is a finite set, referred to as the \emph{alphabet} and whose
elements are called \emph{letters}, and $I\subset \Sigma\times \Sigma$
is an irreflexive and symmetric binary relation on~$\Sigma$.  To each
independence pair is attached another ordered pair $(\Sigma,D)$,
called the associated \emph{dependence pair}, where $D$ is defined by
$D=(\Sigma\times\Sigma)\setminus I$, which is a symmetric and
reflexive relation on~$\Sigma$. Two letters $\alpha,\beta\in\Sigma$
such that $(\alpha,\beta)\in I$ are said to be \emph{parallel},
denoted by $\alpha\indep \beta$\,.

To each independence pair $(\Sigma,I)$ is associated the finitely
presented monoid
\begin{equation*}
\M(\Sigma,I)=\bigl\langle\Sigma\,|\,
\alpha\cdot\beta=\beta\cdot\alpha\text{ for $(\alpha,\beta)\in
  I$}\bigr\rangle\,.
\end{equation*}
Denoting by $\Sigma^*$ the free monoid generated by~$\Sigma$, the
monoid $\M=\M(\Sigma,I)$ is thus the quotient monoid~$\Sigma^*/\R$\,,
where $\R$ is the congruence relation on $\Sigma^*$ generated by
$(\alpha\beta,\beta\alpha)$, for $(\alpha,\beta)$ ranging over~$I$.
Such a monoid $\M$ is called a \emph{trace monoid}, and its elements
are called \emph{traces}. The concatenation in $\M$ is denoted by the
dot~``\,$\cdot$\,''\,, the unit element in $\M$, the {\em empty}
trace, is denoted~$0$.  The trace monoid $\M$ is said to be
\emph{non-trivial} if $\Sigma\neq\emptyset$. \emph{By convention, we
  only consider non-trivial trace monoids throughout the paper, even
  if not specified.}  Throughout the paper, we consider a generic
trace monoid $\M=\M(\Sigma,I)$.

\medskip

Viennot's \emph{heap of pieces} interpretation is an enlightening
visualization of traces~\cite{viennot86}. In this interpretation, a
trace is identified with the \emph{heap} obtained from any
representative word as follows: each letter corresponds to a piece
that falls vertically until it is blocked; a letter is blocked by all
other letters but the ones which are parallel to it. We illustrate
this in Figure~\ref{fig:heapm1} for the example monoid $\M_1=\langle
a,b,c\,|\,a\cdot b=b\cdot a\rangle$\,.

\medskip

\begin{figure}[hb]
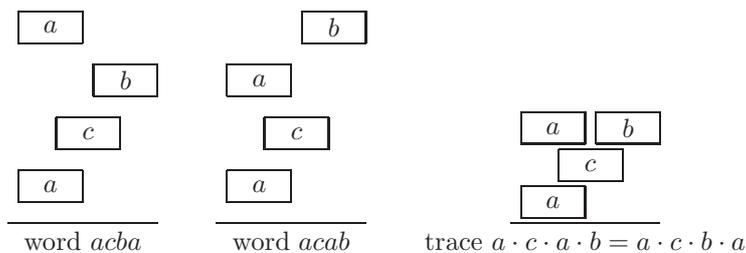

  \centering
  \begin{tabular}{ccc}
\xy
<.1em,0em>:
(0,6)="G",
"G"+(12,6)*{a},
"G";"G"+(24,0)**@{-};"G"+(24,12)**@{-};"G"+(0,12)**@{-};"G"**@{-},
(14,26)="G",
"G"+(12,6)*{c},
"G";"G"+(24,0)**@{-};"G"+(24,12)**@{-};"G"+(0,12)**@{-};"G"**@{-},
(0,66)="G",
"G"+(12,6)*{a},
"G";"G"+(24,0)**@{-};"G"+(24,12)**@{-};"G"+(0,12)**@{-};"G"**@{-},
(28,46)="G",
"G"+(12,6)*{b},
"G";"G"+(24,0)**@{-};"G"+(24,12)**@{-};"G"+(0,12)**@{-};"G"**@{-},
(-4,-2);(52,-2)**@{-}
\endxy& \quad
\xy
<.1em,0em>:
(0,6)="G",
"G"+(12,6)*{a},
"G";"G"+(24,0)**@{-};"G"+(24,12)**@{-};"G"+(0,12)**@{-};"G"**@{-},
(14,26)="G",
"G"+(12,6)*{c},
"G";"G"+(24,0)**@{-};"G"+(24,12)**@{-};"G"+(0,12)**@{-};"G"**@{-},
(0,46)="G",
"G"+(12,6)*{a},
"G";"G"+(24,0)**@{-};"G"+(24,12)**@{-};"G"+(0,12)**@{-};"G"**@{-},
(28,66)="G",
"G"+(12,6)*{b},
"G";"G"+(24,0)**@{-};"G"+(24,12)**@{-};"G"+(0,12)**@{-};"G"**@{-},
(-4,-2);(52,-2)**@{-}
\endxy
&\quad \xy
<.1em,0em>:
0="G",
"G"+(12,6)*{a},
"G";"G"+(24,0)**@{-};"G"+(24,12)**@{-};"G"+(0,12)**@{-};"G"**@{-},
(14,14)="G",
"G"+(12,6)*{c},
"G";"G"+(24,0)**@{-};"G"+(24,12)**@{-};"G"+(0,12)**@{-};"G"**@{-},
(0,28)="G",
"G"+(12,6)*{a},
"G";"G"+(24,0)**@{-};"G"+(24,12)**@{-};"G"+(0,12)**@{-};"G"**@{-},
(28,28)="G",
"G"+(12,6)*{b},
"G";"G"+(24,0)**@{-};"G"+(24,12)**@{-};"G"+(0,12)**@{-};"G"**@{-},
(-4,-2);(52,-2)**@{-}
\endxy\\
word $acba$&\quad word $acab$&\quad trace $a\cdot c\cdot a\cdot b=a\cdot c\cdot b\cdot a$
\end{tabular}
\caption{\textsl{Two congruent words
    and the resulting heap (trace) for~$\M_1$}}
  \label{fig:heapm1}
\end{figure}

The trace monoid $\M$ is said to be \emph{irreducible} whenever the
associated dependence pair~$(\Sigma,D)$, as an undirected graph, is
connected.  Note that a given trace monoid $\M(\Sigma,I)$ determines
the independence pair $(\Sigma,I)$ up to isomorphism, and hence the
dependence pair $(\Sigma,D)$ as well, which makes the definition of an
irreducible trace monoid intrinsic to the monoid.

\medskip

The notion of independence clique is central in the combinatorics of
trace monoids. An element $c\in\M$ is said to be a \emph{clique} if it
is of the form $c=\alpha_1\cdot\ldots\cdot\alpha_n$ for some integer
$n\geq0$ and for some letters $\alpha_1,\ldots,\alpha_n\in\Sigma$ such
that $i\neq j\implies \alpha_i\indep \alpha_j$\,.  The set of
cliques is denoted~$\C_\M$\,, or simply~$\C$. The set
$\C\setminus\{0\}$ of non-empty cliques is denoted\/~$\Cstar_\M$\,,
or\/~$\Cstar$\,.  Noting that a letter may occur at most once in any
representative word of a clique, we identify a clique with the set of
letters occurring in any of its representative words. In the heap
representation, each layer of a heap is a clique.

Two cliques $c,c'\in\C$ are said to be \emph{parallel} whenever
$c\times c'\subseteq I$\,, which is denoted $c\parallel c'$\,. This
relation extends to cliques the parallelism relation defined on
letters.  Observe that, since $I$ is supposed to be irreflexive, two
parallel cliques are necessarily disjoint.

For each clique $c\in\C$, we consider the sub-monoid $\M_c\subseteq\M$
defined as follows:
\begin{align}
  \label{eq:39}
  \Sigma_c&=\{\alpha\in\Sigma\tq\alpha\indep c\}\,,&
I_c&=I\cap(\Sigma_c\times\Sigma_c)\,,&
\M_c&=\M(\Sigma_c,I_c)\,.
\end{align}

So for instance, $\Sigma_0=\Sigma$ and $\M_0=\M$; while $\Sigma_c=\emptyset$ if $c$ is
a maximal clique, and then $\M_c=\{0\}$\,.

\medskip

The \emph{length} of a trace $u\in\M$ is defined as the length of any
of its representative words in the free monoid~$\Sigma^*$, and is
denoted by~$|u|$. Obviously, length is additive on traces, and $0$ is
the unique trace of length~$0$. The length of a trace corresponds to
the number of pieces in the associated heap. 

\medskip
We consider the left divisibility relation of~$\M$, denoted~$\leq$,
and defined by:
\begin{equation*}
  \forall u,v\in\M\quad u\leq v\iff\exists w\in\M\quad v=u\cdot w\,.
\end{equation*}
Trace monoids are cancellative~\cite{cartier69}. This justifies the
notation $v-u$ to denote the unique trace $w\in\M$ such that $v=u\cdot
w$ whenever $u\leq v$ holds. The two properties mentioned above for
the length of traces imply that $(\M,\leq)$ is a partial order.

\medskip
Informally, infinite traces correspond to heaps with a
countably infinite number of pieces. 
Following~\cite{abbes08}, a formal way to define infinite traces
associated to $\M$ is to consider the completion of $\M$ with respect
to least upper bound (\lub) of non-decreasing sequences in
$(\M,\leq)$. Say that a sequence $\seq uk$ is \emph{non-decreasing} in
$\M$ if $u_k\leq u_{k+1}$ holds for all integers $k\geq0$.  Let
$(\H,\preccurlyeq)$ be the pre-ordered set of all non-decreasing sequences
in~$\M$ equipped with the Egli-Milner pre-ordering relation, defined
as follows:
\begin{equation*}
\seq uk\preccurlyeq \seq{u'}k  \iff\forall k\geq0\quad\exists
k'\geq0\quad u_k\leq u'_{k'}\,.
\end{equation*}

Finally, let $(\W,\leq)$ be the collapse partial order associated with
$(\H,\preccurlyeq)$. The elements of $\W=\W(\Sigma,I)$ are called
\emph{generalized traces}. Intuitively, any non-decreasing sequence in
$\M$ defines a generalized trace, and two such sequences are
identified whenever they share the same \lub\ in a universal
\lub-completion of~$\M$. In particular, there is a natural embedding
of partial orders $\iota:\M\to\W$ which associates to each trace
$u\in\M$ the generalized trace represented by the constant sequence,
equal to~$u$. In the heap model, generalized traces correspond to
heaps with countably many pieces, either finitely or infinitely many.

By construction, any generalized trace is the \lub\ in $\W$ of a
non-decreasing sequence of traces in~$\M$. Furthermore, $(\W,\leq)$ is
shown to be closed with respect to \lub\ of non-decreasing sequences, and
also to enjoy the following compactness property: for every trace
$u\in\M$ and for every non-decreasing sequence $\seq uk$ in $\M$ such that
$\bigvee\{u_k\tq k\geq0\}\geq u$ holds in~$\W$, there exists an
integer $k\geq0$ such that $u_k\geq u$ holds in~$\M$.  This property
is used to reduce problems concerning generalized traces to problems
concerning traces.

\medskip

The \emph{boundary} of $\M$ is defined as a measurable space
$(\B\M,\FFF)$. The set $\B\M$ is defined by $\B\M=\W\setminus\M$, the
set of \emph{infinite traces}. For any trace $u\in\M$, the
\emph{elementary cylinder of base~$u$} is the non-empty subset of
$\B\M$ defined by
\[
\up u=\{\xi\in\B\M\tq u\leq\xi\}\:;
\]
and $\FFF$ is
the \slgb\ on $\B\M$ generated by the countable collection of all
elementary cylinders.

\subsection{Finite measures on the boundary}
\label{sec:finite-meas-bound}

In this section, we  point out
two basic facts which are valid for any finite
measure on the boundary of a trace monoid~$\M$.

\medskip

First, it is known, see~\cite[p.~150]{bertoni94},  that any two traces
$u,v\in\M$ have a \lub\ $u\vee 
v$ in $\M$ if and only if there exists a trace $w\in\M$ such that
$u\leq w$ and $v\leq w$, in which case $u$ and $v$ are said to be
\emph{compatible}. Using the 
compactness property mentioned above, we deduce:
\begin{equation}
  \label{eq:38}
\forall u,v\in\M\quad  \up u\;\cap\up v=
  \begin{cases}
\up(u\vee v),&\text{if $u$ and $v$ are compatible,}    \\
\emptyset,&\text{otherwise.}
  \end{cases}
\end{equation}

In particular, two different elementary cylinders $\up u$ and $\up v$ may have a
non-empty intersection, even if $u$ and $v$ have the same length. For instance, for the monoid $\M_1=\langle
a,b,c\,|\,a\cdot b=b\cdot a\rangle$, we have the identity $\up
a\;\cap \up b=\up(a \cdot b)$.

It follows from \eqref{eq:38} that the collection of elementary
cylinders, to which is added the empty set, is closed under finite
intersections: this collection forms thus a $\pi$-system, which
generates~$\FFF$.  This implies that a \emph{finite measure on
  $(\B\M,\FFF)$ is entirely determined by its values on elementary
  cylinders.}

\medskip

Second, we highlight a relation satisfied by any finite measure on the
boundary (proof postponed to~\S~\ref{sec:proof-proposition-1}).

\medskip

\begin{proposition}
  \label{prop:9}
  Let $\lambda$ be a finite measure defined on the boundary
  $(\B\M,\FFF)$ of a trace monoid~$\M$. Then:
\begin{equation}
  \label{eq:26}
  \forall u\in\M\qquad
\sum_{c\in\C_\M}(-1)^{|c|}\lambda\bigl(\up(u\cdot c)\bigr)=0\,.
\end{equation}
\end{proposition}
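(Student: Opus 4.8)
The plan is to recognize \eqref{eq:26} as the inclusion--exclusion expansion of the $\lambda$-measure of a finite union of elementary cylinders. The starting observation is that every infinite trace strictly above $u$ admits at least one letter immediately after $u$: for $\xi\in\up u$ we have $u\leq\xi$ with $\xi\neq u$, so the remainder $\xi-u$ is a non-empty generalized trace, which has a minimal letter $\alpha\in\Sigma$ with $u\cdot\alpha\leq\xi$. This yields the covering
\begin{equation*}
\up u=\bigcup_{\alpha\in\Sigma}\up(u\cdot\alpha)\,,
\end{equation*}
a \emph{finite} union since $\Sigma$ is finite. Each set here is an elementary cylinder, so everything stays in $\FFF$ and $\lambda$ may be applied.

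First I would apply the finite inclusion--exclusion formula for $\lambda$ to this union, reducing the problem to computing $\lambda\bigl(\bigcap_{\alpha\in S}\up(u\cdot\alpha)\bigr)$ for every $\emptyset\neq S\subseteq\Sigma$. Iterating \eqref{eq:38} over the finite family, such an intersection equals $\up\bigl(\bigvee_{\alpha\in S}(u\cdot\alpha)\bigr)$ when the traces $u\cdot\alpha$, $\alpha\in S$, are mutually compatible, and is empty otherwise.

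Next I would pin down exactly when compatibility holds. Two distinct letters $\alpha,\beta$ have a common upper bound in $\M$ if and only if $\alpha\indep\beta$, in which case $(u\cdot\alpha)\vee(u\cdot\beta)=u\cdot\alpha\cdot\beta$; hence the family $\{u\cdot\alpha:\alpha\in S\}$ is mutually compatible precisely when $S$ is a clique, and then $\bigvee_{\alpha\in S}(u\cdot\alpha)=u\cdot c$, where $c$ is the clique whose letter set is $S$. Since non-empty cliques are in bijection with their (pairwise parallel) letter sets and $|c|=|S|$, inclusion--exclusion rewrites as
\begin{equation*}
\lambda(\up u)=\sum_{c\in\Cstar}(-1)^{|c|+1}\lambda\bigl(\up(u\cdot c)\bigr)\,.
\end{equation*}
Transposing every term and noting that the empty clique $0$ contributes $(-1)^{0}\lambda(\up u)=\lambda(\up u)$, so that $\C=\{0\}\cup\Cstar$ accounts for all terms, gives exactly \eqref{eq:26}.

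I expect the main obstacle to be the compatibility analysis, namely the equivalence between mutual compatibility of $\{u\cdot\alpha:\alpha\in S\}$ and $S$ being a clique, together with the identification of the join as $u\cdot c$. By cancellativity this reduces to the corresponding statement about the letters alone: a set of distinct, pairwise parallel letters each dividing a common trace has its clique product dividing that trace (so $u\cdot c$ is their join), whereas any dependent pair $\alpha,\beta$ already fails to be compatible, since both would have to be minimal in a common finite upper bound and hence lie together in its first Cartier--Foata layer, which is a clique. Once this purely combinatorial fact about $\M$ is secured, the remainder is just the finite inclusion--exclusion principle applied to $\lambda$ combined with the iterated form of \eqref{eq:38}.
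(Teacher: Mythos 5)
Your proof is correct and follows essentially the same route as the paper's: the paper likewise writes $\up u=\bigcup_{\alpha\in\Sigma}\up(u\cdot\alpha)$ and applies inclusion--exclusion, identifying each non-empty intersection with $\up(u\cdot c)$ for $c$ a clique (this identification is carried out in the derivation of \eqref{eq:13}, to which the paper's one-line proof of Proposition~\ref{prop:9} refers). You have merely written out in full the compatibility analysis that the paper leaves implicit.
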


If $\M=\Sigma^*$ is a free monoid, corresponding to the
trivial independence relation $I=\emptyset$\,, then the only non-empty
cliques are the letters of the alphabet~$\Sigma$. In this case, if
$\lambda_k$ is the marginal distribution of $\lambda$ on the words of
length~$k\geq0$, the relation~\eqref{eq:26} is equivalent to
$\lambda_k(u)=\sum_{\alpha\in\Sigma}\lambda_{k+1}(u\cdot\alpha)$\:,
the usual consistency relation between marginals. For general trace
monoids however, the sum in~\eqref{eq:26} contains terms for cliques
of length~$\geq2$. This relates with~\eqref{eq:38}.

\subsection{Valuations and Bernoulli measures}
\label{sec:harm-valu-harm}

Our central object of study is introduced in the following
definition. Throughout the paper, $\RR_+^*$~denotes the set of
positive reals.

\begin{definition}
  \label{def:1}
  Let $\M$ be a trace monoid. We say that a probability measure $\pr$
  on $(\B\M,\FFF)$ is a\/ \emph{Bernoulli measure} if it satisfies:
\begin{gather}
  \label{eq:1}
\forall u\in\M\qquad\pr(\up u)>0\,,\\
\label{eq:2}
\forall u,v\in\M\qquad\pr\bigl(\up(u\cdot v)\bigr)=\pr(\up u)\pr(\up
v)\,.
\end{gather}
The \emph{characteristic numbers} of\/ $\pr$ are defined by
$p_\alpha=\pr(\up\alpha)$ for $\alpha\in\Sigma$\,.
\end{definition}

A Bernoulli measure $\pr$ is entirely characterized by its
characteristic numbers since, by~\eqref{eq:2}, the value of $\pr$ on
all elementary cylinders is determined by the characteristic
numbers. The characteristic numbers appear thus as the natural family
of parameters of a Bernoulli measure.

The main property of Bernoulli measures, condition~\eqref{eq:2},
corresponds to a \emph{memoryless} property on traces. Note that, if
$\M=\Sigma^*$ is a free monoid, then $(\B\M,\FFF)$ is the standard
sample space of infinite sequences with values in~$\Sigma$, and
measures satisfying~\eqref{eq:2} are indeed the standard Bernoulli
measures corresponding to \iid\ processes.

Condition~\eqref{eq:1} is there for convenience and does not involve any loss of
generality. Indeed, it will be satisfied when restricting
ourselves to the sub-monoid generated by those $\alpha\in\Sigma$ such
that $p_\alpha>0$\,.

\medskip
Say that  a function
$f:\M\to\RR_+^*$ which satisfies:
\begin{equation}
  \label{eq:4}
  \forall u,v\in\M\quad f(u\cdot v)=f(u)f(v)\,,
\end{equation}
is a \emph{valuation}, and we insist that $f$ only takes
positive values. The numbers defined by $p_\alpha=f(\alpha)$
are called the \emph{characteristic numbers} of the valuation, and it
is readily seen that for any family of positive
numbers~$(q_\alpha)_{\alpha\in\Sigma}$\,, there exists a unique
valuation with $(q_\alpha)_{\alpha\in\Sigma}$ as characteristic
numbers. By definition, if $\pr$ is a Bernoulli measure
on~$(\B\M,\FFF)$, then the function $u\in\M\mapsto\pr(\up u)$ is a
valuation, that is said to be \emph{induced by\/~$\pr$}\,.

\medskip We recall next the notion of M\"obius polynomial and the
notion of
M\"obius transform of functions. The general notion of M\"obius
transform for partial orders has been introduced by
Rota~\cite{rota64}, and we particularize it to trace monoids.

Considering a trace monoid~$\M$ and any real-valued function
$f:\C\to\RR$\,, the \emph{M\"obius transform} of $f$ is the
function $h:\C\to\RR$ defined by:
\begin{equation}
  \label{eq:5}
  \forall c\in\C\qquad h(c)=\sum_{c'\in\C\tq c\leq c'}(-1)^{|c'|-|c|}f(c')\,.
\end{equation}

By convention, if $f:\M\to\RR_+^*$ is a valuation, the M\"obius transform
of $f$ is defined as the M\"obius transform of its
restriction~$f\rest{\C}$\,.

For each letter $\alpha\in\Sigma$, let $X_\alpha$ be a formal
indeterminate, and let $\ZZ[\Sigma]$ be the ring of polynomials
over~$(X_\alpha)_{\alpha\in\Sigma}$\,. The \emph{multi-variate
  M\"obius polynomial} associated to $(\Sigma,I)$ is
$\mu_\M\in\ZZ[\Sigma]$ defined by:
\begin{equation}
\label{eq:19}
  \mu_\M=\sum_{c\in\C_\M}(-1)^{|c|}
\prod_{\alpha\in c}X_\alpha\,.
\end{equation}

The \emph{evaluation} of the polynomial $\mu_\M$ over a family of real
numbers $(p_\alpha)_{\alpha\in\Sigma}$ is obtained by substituting the
real numbers $p_\alpha$ to the indeterminates $X_\alpha$ in the above
expression. The result is
denoted~$\mu_\M\bigl((p_\alpha)_{\alpha\in\Sigma}\bigr)$\,.

When considering a valuation $f:\M\to\RR_+^*$\,, with characteristic
numbers $(p_\alpha)_{\alpha\in\Sigma}$\,, the M\"obius transform $h$
of $f$ has the following simple expression involving the evaluation of
M\"obius polynomials:
\begin{equation}
  \label{eq:40}
  \forall c\in\C\quad h(c)=f(c)\;\muc\bigl((p_\alpha)_{\alpha\in\Sigma_c}\bigr)\,,
\end{equation}
where $\M_c$ is the sub-monoid defined in~\eqref{eq:39}.  The
expression~\eqref{eq:40} derives immediately from the change of
variable $c'=c\cdot\delta$\,, for $\delta$ ranging over~$\C_{\M_c}$\,,
in the defining sum~\eqref{eq:5} for~$h(c)$, and using the
multiplicative property~\eqref{eq:4}. Two particular instances
of~\eqref{eq:40} shall be noted: for $c=0$, we obtain
$h(0)=\mu_\M\bigl((p_\alpha)_{\alpha\in\Sigma}\bigr)$ since
$f(0)=1$\,, and if $c\in\C_\M$ is maximal then $h(c)=f(c)$ since
$\C_{\M_c}=\{0\}$.

\begin{definition}
  \label{def:3}
  Let $\M=\M(\Sigma,I)$ be a trace monoid. A valuation
  $f:\M\to\RR_+^*$ is a\/ \emph{\goodname\ valuation}\ if its M\"obius
  transform $h:\C_\M\to\RR$ satisfies the following two conditions:
  \begin{align}
    \label{eq:16}
    &(a)\quad h(0)=0\,,&& (b)\quad\forall c\in\Cstar_\M\quad h(c)>0\,.
    \intertext{Equivalently, if $(p_\alpha)_{\alpha\in\Sigma}$ are the
      characteristic numbers of~$f$, then $f$ is a \goodname\
      valuation if and only if:}
  \label{eq:41}
&(a)\quad\mu_\M\bigl((p_\alpha)_{\alpha\in\Sigma}\bigr)=0\,,&&
(b)\quad\forall c\in\Cstar_\M\quad \muc\bigl((p_\alpha)_{\alpha\in\Sigma_c}\bigr)>0\,.
\end{align}
\end{definition}

Our first result transfers the initial problem of determining
Bernoulli measures to the new problem of determining \goodname\
valuations.

\medskip

\begin{theorem}
  \label{thr:1}
  Let $\M(\Sigma,I)$ be an irreducible trace monoid. Then:
\begin{enumerate}
\item\label{item:3} The valuation induced by any Bernoulli measure on
  the boundary of $\M$ is a \goodname\ valuation.
\item\label{item:4} If $f:\M\to\RR_+^*$ is a \goodname\ valuation,
  there exists a unique Bernoulli measure\/ $\pr$ on\/ $(\B\M,\FFF)$
  such that\/ $\pr(\up u)=f(u)$ for all $u\in\M$.
\end{enumerate}
\end{theorem}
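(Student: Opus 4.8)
My plan rests on a single object, the \emph{initial clique} of an infinite trace. For $\xi\in\B\M$ let $\Delta(\xi)$ be the clique of the letters that are minimal for~$\xi$, that is, the first factor of its Cartier--Foata decomposition; it lies in $\Cstar$ because $\xi$ is infinite. Since a clique $c$ divides $\xi$ exactly when each of its letters is minimal for $\xi$, we have $\up c=\{\xi\in\B\M\tq c\le\Delta(\xi)\}$. Starting from a Bernoulli measure $\pr$ with induced valuation $f$ and M\"obius transform $h$, I would first prove the identity
\[
\pr\bigl(\Delta(\xi)=c\bigr)=\sum_{c'\in\C\tq c\le c'}(-1)^{|c'|-|c|}f(c')=h(c),\qquad c\in\C,
\]
by inclusion--exclusion on the non-empty cliques of $\M_c$: the only cylinders that contribute are $\up(c\cdot e)$ for $e\in\C_{\M_c}$, because by~\eqref{eq:38} the intersection $\up(c\cdot\beta_1)\cap\dots\cap\up(c\cdot\beta_k)$ is empty as soon as $\{\beta_1,\dots,\beta_k\}$ is not a clique; the resulting alternating sum is precisely the M\"obius transform~\eqref{eq:5} after the change of variable $c'=c\cdot e$. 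Everything in part~\ref{item:3} flows from this.

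For part~\ref{item:3}, the identity yields condition $(a)$ at once, $h(0)=\pr(\Delta(\xi)=0)=0$, since the initial clique of an infinite trace is never empty (this is also Proposition~\ref{prop:9} with $u=0$). As each $h(c)$ is a probability we get $h(c)\ge0$, so only the strict inequality $(b)$ needs an argument. I would obtain $h(c)>0$ for $c\in\Cstar$ by exhibiting a finite trace $w\ge c$ with $\up w\subseteq\{\Delta(\xi)=c\}$, whence $h(c)\ge\pr(\up w)=f(w)>0$ by~\eqref{eq:1}. Such a $w$ is built greedily from $w_0=c$: while $\Sigma_{w_i}\neq\emptyset$, connectedness of the dependence graph (irreducibility of~$\M$) provides a letter $\gamma$ dependent both on some $\beta\in\Sigma_{w_i}$ and on some letter of $w_i$ (take the first vertex off $\Sigma_{w_i}$ along a dependence path joining $\beta$ to a letter of $w_i$); appending $\gamma$ keeps the set of minimal letters equal to $c$, as $\gamma$ is blocked by $w_i$, while removing $\beta$ from $\Sigma_{w_i}$. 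The process halts at a $w$ with minimal letters exactly $c$ and $\Sigma_w=\emptyset$; any fresh minimal letter of an extension $\xi\ge w$ would have to be parallel to all of $w$, hence lie in $\Sigma_w=\emptyset$, so $\Delta(\xi)=c$ on all of $\up w$. This is the step that genuinely uses irreducibility and is the main obstacle in part~\ref{item:3}.

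For part~\ref{item:4}, uniqueness and the Bernoulli property are formal: two probability measures inducing $f$ agree on the $\pi$-system of elementary cylinders, hence on $\FFF$; and any $\pr$ with $\pr(\up u)=f(u)$ satisfies~\eqref{eq:1}--\eqref{eq:2} because $f$ is a positive valuation. The substance is existence, which I would settle by Carath\'eodory extension from the $\pi$-system $\{\up u\}\cup\{\emptyset\}$. For a trace $u$ and a clique $c$ consider
\[
A(u;c)=\up(u\cdot c)\setminus\bigcup_{\beta\in\Sigma_c}\up(u\cdot c\cdot\beta)=\{\xi\in\B\M\tq u\le\xi,\ \Delta(\xi-u)=c\}.
\]
The same inclusion--exclusion as above, now combined with~\eqref{eq:40}, prescribes the mass $f(u)\,h(c)$ to $A(u;c)$, and this is $\ge0$ precisely because of part~\ref{item:3}. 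As $c$ runs over $\Cstar$ the sets $A(u;c)$ partition $\up u$, with masses summing to $f(u)\sum_{c\in\Cstar}h(c)=f(u)\bigl(f(0)-h(0)\bigr)=f(u)$. Refining cylinders by these atoms, I would check that the prescription extends to a non-negative, finitely additive set function of total mass~$1$ on the algebra generated by the cylinders.

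The passage to $\sigma$-additivity is where I expect the real difficulty. Here compactness enters: the elementary cylinders are clopen in the compact boundary $\B\M$, their closedness being exactly the compactness property recalled in~\S\ref{sec:trace-monoids-their}; consequently any decreasing sequence of algebra elements with empty intersection is eventually empty, continuity at $\emptyset$ holds, and Carath\'eodory's theorem produces a unique probability measure $\pr$ on $\FFF$ with $\pr(\up u)=f(u)$. Securing this $\sigma$-additivity --- equivalently, ruling out leakage of mass onto finite traces --- is the main obstacle of part~\ref{item:4}. A constructive alternative is to realize $\pr$ as the image, under Cartier--Foata coding, of the Markov chain on $\Cstar$ with initial law $h$ and transition matrix $P(c,c')=h(c')\,\1{c'\cap\Sigma_c=\emptyset}\big/\muc\bigl((p_\alpha)_{\alpha\in\Sigma_c}\bigr)$; this matrix is stochastic, since its denominator is positive by~\eqref{eq:41} and (using $\mu_\M\bigl((p_\alpha)_{\alpha\in\Sigma}\bigr)=0$) equals the sum $\sum_{c'\cap\Sigma_c=\emptyset}h(c')$ of its numerators, with at least the singletons $\{\alpha\}$, $\alpha\in c$, as admissible successors. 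Kolmogorov's theorem then yields $\pr$, and $\pr(\up u)=f(u)$ follows by induction on $|u|$.
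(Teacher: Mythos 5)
Your point~\ref{item:3} is correct, and your positivity argument is a genuine (and attractive) variant of the paper's. The identity $\pr(\Delta=c)=h(c)$ is exactly the $n=1$ case of the paper's formula~\eqref{eq:13} (obtained there from~\eqref{eq:9} by the same inclusion--exclusion), and $h(0)=0$ follows as you say. For $h(c)>0$ the paper computes the full finite-dimensional law of $(C_k)$ and then invokes strong connectivity of the acceptor graph $(\Cstar,\to)$ together with $h=f$ on maximal cliques; you instead build, by greedily walking along dependence paths, a finite trace $w\geq c$ with $\Sigma_w=\emptyset$ and $\up w\subseteq\{\Delta=c\}$, so that $h(c)\geq f(w)>0$. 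Your construction is sound: the appended letter is never minimal, the set of letters parallel to all of $w_i$ strictly shrinks at each step, and once it is empty no extension can acquire a new minimal letter. Both arguments use irreducibility through connectedness of $(\Sigma,D)$; yours is more self-contained.

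The gap is in point~\ref{item:4}, at the step you compress into ``$\pr(\up u)=f(u)$ follows by induction on $|u|$'' (and, in the Carath\'eodory variant, into ``I would check that the prescription extends to a \dots finitely additive set function''). By~\eqref{eq:8}, $\up u=\{C_1\cdot\ldots\cdot C_n\geq u\}$ with $n=\height(u)$, so what must actually be proved is $\sum_{u'\in\M(u)}h(u')=f(u)$, the sum running over all $u'\geq u$ of the same height, with $h$ the extended M\"obius transform~\eqref{eq:42}. This is Proposition~\ref{thr:3}, whose proof (via Lemmas~\ref{lem:3} and~\ref{lem:4}) is the technical heart of the sufficiency direction, and it does not reduce to an induction on $|u|$ or on height: conditioning on $C_1$ couples the event $\{C_1\cdot\ldots\cdot C_n\geq u\}$ to the Cartier--Foata admissibility $C_1\to C_2$ and to the parallelism constraints of Lemma~\ref{prop:1}, so the residual event is not of the form $\up v$ for the shifted chain. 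Your one-step partition $\up u=\bigsqcup_{c\in\Cstar}A(u;c)$ with masses summing to $f(u)$ only encodes $\sum_{c\in\Cstar}h(c)=1$ and the identity $h=fg$ of Proposition~\ref{prop:4}; it is necessary but far from sufficient for consistency on the algebra generated by all cylinders, whose atoms are governed by intersections $\up u\cap\up v=\up(u\vee v)$ and are not of the form $A(u;c)$. The compactness/clopen argument for $\sigma$-additivity is fine once the coding is in place, but until the identity~\eqref{eq:34} is supplied, the existence half of point~\ref{item:4} is not established.
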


Although Theorem~\ref{thr:1} provides valuable information, it does
not state the existence of \goodname\ valuations---and thus of
Bernoulli measures. We will give a positive result on this point
in~\S~\ref{sec:uniform-measures}.

\medskip

The basic relations~\eqref{eq:26}, valid for
any finite measure, when applied to a Bernoulli measure~$\pr$, reduce
to the following:
\[
\forall u\in\M \quad \sum_{c\in\C}(-1)^{|c|}\pr\bigl(\up(
u \cdot c)\bigr)=0\,, \text{ hence } \ \sum_{c\in\C}(-1)^{|c|}\pr(\up
c)=0 \:.
\]
Developing each $\pr(\up c)$ as a product of characteristic
numbers~$p_\alpha$ for $\alpha$ ranging over~$c$, yields the relation
$\mu_\M\bigl((p_\alpha)_{\alpha\in\M}\bigr)=0$\,, proving that
point~$(a)$ in~\eqref{eq:41} is a necessary condition for $\pr$ to be
a Bernoulli measure. This is the only elementary part in the proof of
Theorem~\ref{thr:1}; the rest of the proof is postponed
to~\S~\ref{sec:necess-cond-from} for point~\ref{item:3} and
to~\S~\ref{sec:suff-cond-from} for point~\ref{item:4}.

\subsection{Cartier-Foata subshift and acceptor graph}
\label{sec:cartier-foata-sub}

We introduce now a subshift of finite type based on the Cartier-Foata
decomposition of traces (only elementary notions related to subshifts
will be used, and proper definitions will be recalled when
needed). This is the starting point for a \emph{realization result} in
which Bernoulli measures are described as the law of the trajectories
of a Markov chain.

\medskip Let $\M=\M(\Sigma,I)$ be a trace monoid, and let
$D\subseteq\Sigma\times\Sigma$ be the associated dependence
relation. A pair $(c,c')\in\C_\M\times\C_\M$ of cliques is said to be
\emph{Cartier-Foata admissible}, denoted $c\to c'$, if:
\begin{equation*}
  \forall\beta\in c'\quad\exists\alpha\in c\quad (\alpha,\beta)\in D\,.
\end{equation*}

For every non-empty trace $u\in\M$, there exists a unique integer
$n\geq1$ and a unique sequence of non-empty cliques $(c_1,\ldots,c_n)$
such that $u=c_1\cdot\ldots\cdot c_n$ and $c_i\to c_{i+1}$ holds for
all $i$ in $\{1,\ldots,n-1\}$. This sequence of cliques, denoted
$c_1\to\ldots\to c_n$\,, is called the \emph{Cartier-Foata
  decomposition} or \emph{Cartier-Foata normal form}
of~$u$~\cite{cartier69,viennot86}. The integer $n$ is called the
\emph{height} of the trace~$x$, denoted by $n=\height(x)$.

In the heap interpretation, this sequence of cliques corresponds to
the successive layers of pieces in the heap.

\medskip

The \emph{Cartier-Foata acceptor graph} is the graph $(\Cstar,\to)$.
The \emph{Cartier-Foata subshift} of $\M$ is the set of right-infinite
paths in the graph $(\Cstar,\to)$\,. (It is a "subshift of finite
type" in the terminology of symbolic dynamics.)  See for instance
Figure \ref{fig:carteirfoatacliques} in~\S~\ref{sec:illustr-exampl}
for a concrete example of a Cartier-Foata acceptor graph.  Denote by
$(\Omega,\GGG)$ the measurable space corresponding to the
right-infinite paths in the graph $(\Cstar,\to)$.  Hence elements
$\omega$ of $\Omega$ are given by infinite sequences $(c_k)_{k\geq1}$
of non-empty cliques such that $c_k\to c_{k+1}$ holds for all
$k\geq1$, and $\GGG$ is the \slgb\ of $\Omega$ induced by the product
\slgb, where $\Cstar$ is equipped with the discrete \slgb.

\medskip
The Cartier-Foata decomposition result can be rephrased as the fact
that the finite paths in the graph $(\Cstar,\to)$ are in bijection
with the traces of the monoid. More precisely, for each integer
$k\geq0$, traces of height $k$ are in bijection with paths of length
$k$ in the graph.

In the same way, infinite paths of $(\Cstar,\to)$ correspond naturally
to the points of the boundary of the monoid.  We postpone the proof of
this result to \S~\ref{sec:order-trac-extend} and admit for the time
being that there exists a bi-measurable bijection
$\Psi:\B\M\to\Omega$, which associates to each point of the boundary
$\xi\in\B\M$ an infinite sequence $(c_k)_{k\geq1}$ with values
in~$\Cstar_\M$\,, and entirely characterized by the following two
properties:
\begin{align}
  \label{eq:11}
\forall k\geq1\quad c_k&\to c_{k+1}\,,&
\bigvee_{k\geq1}(c_1\cdot\ldots\cdot c_k)=\xi\,.
\end{align}

The bijection $\Psi:\B\M\to\Omega$ induces a bijection
$\Psi_*:\MM_1(\partial\M,\FFF)\to\MM_1(\Omega,\GGG)$ between the
associated sets of probability measures. We shall always use this
identification.

\medskip

Both spaces $\B\M$ and $\Omega$ come equipped with their own
elementary cylinders: $\up u$~with $u\in\M$ for~$\B\M$\,, and
$\{\omega\in\Omega\tq C_1(\omega)=c_1,\ldots ,C_n(\omega)=c_n\}$ with
$c_1\to\ldots\to c_n$ for~$\Omega$. If $\pr$ and $\prq$ are
probability measures on $\B\M$ and $\Omega$ related by
$\prq=\Psi_*\pr$, the effective correspondence between the values of
$\pr$ and $\prq$ on their respective elementary cylinders is
non-trivial. 
For instance $\pr(\up u)$~differs in general from
$\prq(C_1=c_1,\ldots,C_n=c_n)$ where $c_1\to\ldots\to c_n$ is the
Cartier-Foata decomposition of~$u$. The correspondence
between cylinders is established in details
in~\S~\ref{sec:order-trac-extend}.

\begin{theorem}
  \label{thr:2}
Let $\M$ be an irreducible trace monoid.
\begin{enumerate}
\item\label{item:8} Assume that\/ $\pr$ is a Bernoulli measure on
  $(\B\M,\FFF)$, with $f_\pr(\cdot)=\pr(\up\cdot)$ the induced
  valuation.  Then, under probability\/~$\pr$\,, the sequence
  $(C_k)_{k\geq1}$ of Cartier-Foata cliques  is an
  irreducible and aperiodic Markov chain with values
  in\/~$\Cstar$\,. The law of $C_1$ is the restriction to
  $\Cstar$ of the M\"obius transform $h:\C\to\RR$
  of~$f_\pr$\,, and $h>0$ on~$\Cstar$. The transition matrix of the
  chain is $P=(P_{c,c'})_{(c,c')\in\Cstar\times\Cstar}$ given by:
\begin{align}
     \label{eq:12}
P_{c,c'}&=
\begin{cases}
h(c')/g(c),&\text{if $c\to c'$} \\
  0,&\text{if $\neg(c\to c')$}
\end{cases} &
\text{with\   }g(c)&=\sum_{c'\in\Cstar\tq c\to c'}h(c')\,.
\end{align}

Furthermore, for any integer $n\geq1$, if $c_1,\ldots,c_n$ are $n$
non-empty cliques such that $c_1\to\ldots\to c_n$ holds, then:
 \begin{equation}
   \label{eq:13}
   \pr(C_1=c_1,\ldots,C_n=c_n)=f_\pr(c_1)\cdots f_\pr(c_{n-1})h(c_n)\,.
 \end{equation}

\item\label{item:9} Conversely, let $f:\M\to\RR_+^*$ be a \goodname\ valuation,
  and let $h:\C\to\RR$ be the M\"obius transform of~$f$. Then the
  restriction $h\rest\Cstar$ is a probability distribution
  on~$\Cstar$. The Markov chain on\/ $\Cstar$ with $h\rest\Cstar$ as
  initial law, and with transition matrix $P$ given as
  in~\eqref{eq:12} above, induces a Bernoulli measure $\pr$ on $\B\M$
  which satisfies\/ $\pr(\up u)=f(u)$ for all traces $u$ in $\M$.
\end{enumerate}
\end{theorem}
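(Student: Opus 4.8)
The plan is to transport everything to the path space $(\Omega,\GGG)$ through the bijection $\Psi$ and to reduce both statements to computing the measure of the path-cylinders $\{C_1=c_1,\ldots,C_n=c_n\}$ attached to admissible sequences $c_1\to\cdots\to c_n$. First I would settle the case $n=1$: a clique $c$ satisfies $c\leq\xi$ exactly when $c$ is contained in the first Cartier--Foata clique $C_1(\xi)$, whence the disjoint union $\up c=\bigsqcup_{c'\in\C,\,c'\geq c}\{C_1=c'\}$; Möbius inversion, i.e. the formula (\ref{eq:5}) applied to the relation $f_\pr(c)=\sum_{c'\geq c}\pr(C_1=c')$, gives $\pr(C_1=c)=h(c)$, and positivity of $h$ on $\Cstar$ is Theorem~\ref{thr:1}(\ref{item:3}) together with (\ref{eq:16})(b). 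The other ingredient I would isolate is the purely combinatorial identity, valid for any valuation $f$ with Möbius transform $h$,
\[
\sum_{c'\in\C,\,c\to c'}h(c')=\muc\bigl((p_\alpha)_{\alpha\in\Sigma_c}\bigr)=\frac{h(c)}{f(c)}\,,
\]
which I would prove by a double-counting argument: substitute the definition (\ref{eq:5}) of $h$, exchange the order of summation, and evaluate the alternating inner sum, which vanishes unless all the letters involved are parallel to $c$; the surviving terms are the cliques of $\M_c$ and reproduce (\ref{eq:19})/(\ref{eq:40}) for $\M_c$.

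The core is the master formula (\ref{eq:13}), which I would prove by induction on $n$. The engine is the single-layer expansion: on the event that the first $n-1$ layers are exactly $c_1,\ldots,c_{n-1}$, the residual trace $\eta=\xi-c_1\cdots c_{n-1}$ has $c_{n-1}\to C_1(\eta)$ automatically, because a minimal letter of $\eta$ parallel to $c_{n-1}$ would fall into layer $n-1$; consequently
\[
\mathbf 1_{\{C_1=c_1,\ldots,C_n=c_n\}}=\mathbf 1_{\{C_1=c_1,\ldots,C_{n-1}=c_{n-1}\}}\sum_{\delta\in\C_{\M_{c_n}}}(-1)^{|\delta|}\,\mathbf 1_{\up(c_1\cdots c_{n-1}\cdot c_n\cdot\delta)}\,,
\]
the terms with $c_{n-1}\not\to\delta$ vanishing on this event. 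Taking $\pr$, merging the cylinders via (\ref{eq:38}) and peeling the common prefixes by the memoryless identity (\ref{eq:2}) turns the right-hand side into a telescoping sum whose innermost collapse is exactly (\ref{eq:40}). I expect this bookkeeping to be the main obstacle: the intermediate mixed quantities $\pr\bigl(C_1=c_1,\ldots,C_{n-1}=c_{n-1},\,\xi\geq c_1\cdots c_{n-1}w\bigr)$ do \emph{not} factor as $f_\pr(c_1)\cdots f_\pr(c_{n-1})\pr(\up w)$, and it is only after the full layered inclusion--exclusion, with the coupling between consecutive cliques carried by the relation $c_i\to c_{i+1}$, that the clean product $f_\pr(c_1)\cdots f_\pr(c_{n-1})h(c_n)$ emerges. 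I would therefore organise the induction around a strengthened identity that keeps track of an auxiliary $\up$-constraint $w$, so that the case $w=0$ is precisely (\ref{eq:13}); this is also where I would lean on the explicit cylinder dictionary of \S\ref{sec:order-trac-extend}.

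With (\ref{eq:13}) in hand, statement (\ref{item:8}) is read off directly. The conditional ratio $\pr(C_n=c_n\mid C_1=c_1,\ldots,C_{n-1}=c_{n-1})$ equals $f_\pr(c_{n-1})h(c_n)/h(c_{n-1})$, which depends on $(c_{n-1},c_n)$ only: this is the Markov property, and it identifies the initial law as $h\rest\Cstar$. Using the combinatorial identity in the form $f_\pr(c)\,g(c)=h(c)$ --- here the term $c'=0$ drops out of $\sum_{c'\in\C,\,c\to c'}h(c')$ since $h(0)=\mu_\M=0$ for a \goodname\ valuation, leaving $g(c)=\sum_{c'\in\Cstar,\,c\to c'}h(c')$ --- this ratio is exactly $P_{c_{n-1},c_n}=h(c_n)/g(c_{n-1})$ of (\ref{eq:12}). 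Aperiodicity is immediate because every clique carries a self-loop $c\to c$ (take $\alpha=\beta$ in the admissibility condition, $D$ being reflexive), and irreducibility of the chain is the strong connectedness of the acceptor graph $(\Cstar,\to)$, which holds precisely because $\M$ is irreducible, that is, $(\Sigma,D)$ is connected; positivity $h>0$ on $\Cstar$ guarantees the chain charges every state.

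For the converse (\ref{item:9}) I would run the construction in reverse. That $h\rest\Cstar$ is a probability vector follows from $h>0$ on $\Cstar$ together with $\sum_{c\in\C}h(c)=f(0)=1$ (a one-line Möbius inversion) and $h(0)=\mu_\M\bigl((p_\alpha)\bigr)=0$, i.e. (\ref{eq:16}). The matrix $P$ of (\ref{eq:12}) is stochastic by the very definition of $g$, with $g(c)>0$ because of the self-loop and $h>0$. The Markov measure $\prq$ on $(\Omega,\GGG)$ then assigns to $\{C_1=c_1,\ldots,C_n=c_n\}$ the weight $h(c_1)\prod_{i=1}^{n-1}P_{c_i,c_{i+1}}$, which --- invoking $f(c)g(c)=h(c)$ once more --- telescopes to $f(c_1)\cdots f(c_{n-1})h(c_n)$, i.e. back to (\ref{eq:13}). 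Setting $\pr=\Psi_*^{-1}(\prq)$ and reading the cylinder dictionary of \S\ref{sec:order-trac-extend} the other way recovers $\pr(\up u)=f(u)$ for every trace $u$; positivity (\ref{eq:1}) then follows from $f>0$ and multiplicativity (\ref{eq:2}) from the fact that $f$ is a valuation, so $\pr$ is a Bernoulli measure inducing $f$, as required.
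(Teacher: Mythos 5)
Your overall architecture matches the paper's: both directions hinge on the path-cylinder formula \eqref{eq:13}, and your supporting pieces (the $n=1$ M\"obius inversion giving $\pr(C_1=c)=h(c)$, the identity $f(c)\,g(c)=h(c)$, which is the paper's Proposition~\ref{prop:4}, aperiodicity from the self-loops $c\to c$, and irreducibility from the strong connectedness of $(\Cstar,\to)$) are all correctly identified. But the two computations that carry the real weight are left open. For \eqref{eq:13} itself, your layer-by-layer induction stalls exactly where you say it does: the mixed quantities $\pr\bigl(C_1=c_1,\ldots,C_{n-1}=c_{n-1},\ \xi\geq v\cdot w\bigr)$ do not factor, and you never exhibit the strengthened induction hypothesis that would make the recursion close. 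The paper avoids induction altogether: formula \eqref{eq:9} of Proposition~\ref{prop:3} expresses the event $\{C_1=c_1,\ldots,C_n=c_n\}$ as $\up u\setminus\bigl(\bigcup_{c>c_n}\up(v\cdot c)\bigr)$ with \emph{no} residual constraint on the first $n-1$ layers --- that constraint is automatically encoded in the single cylinder $\up u$. One inclusion--exclusion over that union (whose members and pairwise intersections are again elementary cylinders of the form $\up(v\cdot c')$ with $c'>c_n$), together with multiplicativity of $f_\pr$, then yields \eqref{eq:13} in one step, with no induction and no mixed quantities. You cite the cylinder dictionary of \S\ref{sec:order-trac-extend}, but you only exploit the \eqref{eq:8}-type residual picture; the load-bearing statement is \eqref{eq:9}.

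The second gap is in the converse. Once the Markov measure is built and the path-cylinder weights are identified as $f(c_1)\cdots f(c_{n-1})h(c_n)$, the remaining claim $\pr(\up u)=f(u)$ is \emph{not} obtained by ``reading the cylinder dictionary the other way''. By \eqref{eq:8} it reduces, with $n=\height(u)$, to the identity $\sum_{u'\in\M(u)}h(u')=f(u)$, the sum running over all traces $u'$ of height $n$ with $u'\geq u$, where $h$ is the extended M\"obius transform of \eqref{eq:42}. This is Proposition~\ref{thr:3} of the paper, a genuinely nontrivial multi-layer M\"obius inversion whose proof requires two auxiliary lemmas; it is precisely the inverse of the computation needed in the forward direction, and it cannot be absorbed into a change of coordinates. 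Until you either prove this identity or supply a substitute, the converse is incomplete.
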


It is worth observing that $h\rest\Cstar$ is \emph{not} the stationary
distribution of $P$, implying that $(C_n)_{n\geq1}$ is not stationary
with respect to $n$ under~$\pr$.  Markovian measures with the property
\eqref{eq:13} also appear in the context of random walks on some
infinite groups where they are called ``Markovian
multiplicative''~\cite{mair04}.

The proof of Theorem~\ref{thr:2} is postponed to
\S~\ref{sec:necess-cond-from} for point~\ref{item:8} and to
\S~\ref{sec:suff-cond-from} for point~\ref{item:9}.

\subsection{Uniform measures}
\label{sec:uniform-measures}

So far we have obtained polynomial normalization conditions for the
characteristic numbers of Bernoulli measures
(\S~\ref{sec:harm-valu-harm}) and we have identified Bernoulli
measures with certain Markov measures on a combinatorial subshift
(\S~\ref{sec:cartier-foata-sub}). The reader might have noticed that
the actual existence of Bernoulli measures has not yet been stated.

In this subsection we state the existence of \emph{uniform Bernoulli}
measures, those having all their characteristic numbers identical. We
also introduce the weaker notion of uniform measure. An equivalence
between uniform measures and uniform Bernoulli measures is stated---a
non-trivial result. Then we show how small deformations of the
characteristic numbers around the particular value for the uniform
measure lead to a continuum of distinct Bernoulli measures.

\medskip Let $\M$ be an irreducible trace monoid, and assume there
exists a Bernoulli measure for which all characteristic numbers are
equal, say to some real~$p>0$. Then, according to Theorem~\ref{thr:1}
point~\ref{item:3}, and using the formulation stated
in~\eqref{eq:41}--$(a)$, the number $p$ must be a root of the
\emph{single-variable M\"obius polynomial} $\mu_\M(X)\in\ZZ[X]$ defined
by:
\begin{equation}
  \label{eq:10}
  \mu_\M(X)=\sum_{c\in\C}(-1)^{|c|}X^{|c|}\,.
\end{equation}

We therefore face two questions. First, among the roots of
$\mu_\M(X)$, which ones correspond indeed to a Bernoulli measure? Such
measures, and we shall prove their existence, we call \emph{uniform
  Bernoulli measures}.

Obviously, any uniform Bernoulli measure satisfies the following property:
  \begin{equation}
    \label{eq:28}
    \forall u,v\in\M\qquad |u|=|v|  \implies \pr(\up u)=\pr(\up v)\,.
  \end{equation}
  We emphasize that the above property is purely metric.  Say that a
  probability measure on~$\B\M$ satisfying~\eqref{eq:28} is
  \emph{uniform}. The second question is: does any uniform measure
  belong to the class of Bernoulli measures?  In other words,
  does~\eqref{eq:28} imply the memoryless property
  \mbox{$\pr\bigl(\up(u\cdot v)\bigr)=\pr(\up u)\pr(\up v)$}\,? Note
  that the answer is clearly affirmative in the case of a free monoid,
  but much less trivial for a trace monoid.

Next theorem brings answers to the two above questions. The statement
requires the knowledge of the following fact, which will be given in
an even more precise form below in Theorem~\ref{thr:5}: \emph{the
  M\"obius polynomial of an independence pair $(\Sigma,I)$ has a
  unique root of smallest modulus. This root is real and lies in the
  open interval $(0,1)$}.

\begin{theorem}
\label{thr:6}
  Let $\M$ be an irreducible trace monoid, and let $p_0$ be the unique
  root of smallest modulus of the M\"obius polynomial~$\mu_\M(X)$. Then:
  \begin{enumerate}
  \item\label{item:12} There exists a unique uniform Bernoulli
    measure\/ $\pr_0$ on\/ $(\B\M,\FFF)$. It is entirely characterized
    by\/ $\pr_0(\up u)=p_0^{|u|}$.
  \item\label{item:13} Any uniform measure is Bernoulli
    uniform. Hence\/ $\pr_0$ is also the unique uniform measure
    on~$(\B\M,\FFF)$.
  \end{enumerate}
\end{theorem}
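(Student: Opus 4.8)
The plan is to prove the two points of Theorem~\ref{thr:6} in sequence, leaning heavily on the machinery already assembled in Theorems~\ref{thr:1} and~\ref{thr:2}, together with the announced fact about the root $p_0$ of smallest modulus.

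For point~\ref{item:12}, existence of a uniform Bernoulli measure, the strategy is to produce a \goodname\ valuation whose characteristic numbers are all equal to $p_0$ and then invoke Theorem~\ref{thr:1}, point~\ref{item:4}, to obtain the measure. Concretely, I would set $p_\alpha=p_0$ for every $\alpha\in\Sigma$ and verify conditions~\eqref{eq:41}. Condition $(a)$, namely $\mu_\M\bigl((p_0)_{\alpha\in\Sigma}\bigr)=0$, is immediate: since the exponent $|c|$ and the degree of $\prod_{\alpha\in c}X_\alpha$ coincide, the multivariate polynomial evaluated at the constant family $(p_0)$ is exactly the single-variable polynomial $\mu_\M(p_0)$ from~\eqref{eq:10}, which vanishes by the very choice of $p_0$ as a root. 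The delicate part is condition $(b)$: I must show $\mu_{\M_c}\bigl((p_0)_{\alpha\in\Sigma_c}\bigr)>0$ for every non-empty clique $c$, i.e.\ that the single-variable M\"obius polynomial $\mu_{\M_c}(X)$ of each sub-monoid is strictly positive at $X=p_0$. I expect this to follow from the fact that $p_0$ is the root of smallest modulus of $\mu_\M$: the sub-monoids $\M_c$ are ``smaller,'' so the smallest-modulus root of $\mu_{\M_c}$ should strictly exceed $p_0$, whence $\mu_{\M_c}(X)>0$ on the interval $[0,p_0]$. Making this precise, presumably via the more refined Theorem~\ref{thr:5} and a monotonicity or interlacing argument on the roots of M\"obius polynomials of sub-monoids, is the main obstacle of this half. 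Uniqueness then follows from the uniqueness clause in Theorem~\ref{thr:1}, point~\ref{item:4}, once one argues that $p_0$ is the \emph{only} root of $\mu_\M(X)$ that yields a valid \goodname\ valuation—any other real root in $(0,1)$ would have larger modulus and would fail condition~$(b)$ for some sub-clique.

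For point~\ref{item:13}, the harder and more surprising claim that every uniform measure is automatically Bernoulli, the plan is as follows. Let $\pr$ be any uniform measure, so that $\pr(\up u)$ depends only on $|u|$; write $\pr(\up u)=a_{|u|}$ for a sequence $(a_n)_{n\geq0}$ with $a_0=1$. The key is to extract enough constraints on $(a_n)$ to force $a_n=p_0^{\,n}$, which by point~\ref{item:12} pins down $\pr=\pr_0$ and in particular makes it Bernoulli. I would apply the universal relation~\eqref{eq:26} from Proposition~\ref{prop:9} with $u$ ranging over traces of each height. Since $\pr\bigl(\up(u\cdot c)\bigr)=a_{|u|+|c|}$ depends only on lengths, grouping the cliques $c\in\C$ by their length and counting how many cliques of each size $k$ sit in $\C_\M$—call this count $w_k$, the coefficient of $\mu_\M(X)$—turns~\eqref{eq:26} into the linear recurrence $\sum_{k\geq0}(-1)^k w_k\,a_{n+k}=0$ for every $n\geq0$. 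This is precisely a recurrence whose characteristic polynomial is $\mu_\M(X)$ up to reversal, so $a_n$ is a combination of $r^{-n}$ over the roots $r$ of $\mu_\M$.

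The remaining and genuinely non-trivial step is to argue that, among all solutions of this recurrence, positivity and boundedness of the probabilities $a_n\in[0,1]$ force the solution to be the pure mode $a_n=p_0^{\,n}$. Here is where the spectral dominance of $p_0$ enters: because $p_0$ is the unique root of smallest modulus, $1/p_0$ is the unique root of largest modulus of the reversed polynomial, so any admissible sequence must be asymptotically governed by the $p_0^{\,n}$ term; the other modes either grow faster than any probability may (violating $a_n\leq 1$) or, if they decay, cannot by themselves satisfy $a_0=1$ together with the sub-monoid compatibility constraints. I would combine the growth bound from $a_n\in[0,1]$ with a Perron--Frobenius-type argument—the dominant root $1/p_0$ being simple and its eigendirection positive—to eliminate every component except the $p_0^{\,n}$ one, and finally fix the constant to $1$ using $a_0=1$. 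Establishing this elimination rigorously, in particular ruling out oscillatory combinations of roots of equal modulus, is the principal difficulty; the announced precise form of Theorem~\ref{thr:5}, asserting that $p_0$ is strictly dominated in modulus by no other root and is simple, is exactly what makes the argument go through. Once $a_n=p_0^{\,n}$ is forced, $\pr$ coincides with $\pr_0$ on all elementary cylinders and hence everywhere, completing both the Bernoulli conclusion and the uniqueness assertion.
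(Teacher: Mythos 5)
Your plan for existence (point~\ref{item:12}) is essentially the paper's: condition $(a)$ of~\eqref{eq:41} is immediate, and condition $(b)$ reduces to showing that the smallest root of $\mu_{\M_c}$ strictly exceeds $p_0$ for every non-empty clique $c$. The paper carries this out in Proposition~\ref{cor:5}, by comparing the growth rates $\lambda_{\M_c}(k)$ and $\lambda_\M(k)$ via a Perron--Frobenius argument on an expanded Cartier--Foata automaton (Lemma~\ref{lem:5}); this is where irreducibility is used, and it is a real piece of work rather than a routine interlacing statement, but your identification of it as the obstacle, and of the correct statement to prove, is accurate.

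Point~\ref{item:13} is where there is a genuine gap. You correctly derive the linear recurrence $\sum_{c\in\C}(-1)^{|c|}a_{n+|c|}=0$ from Proposition~\ref{prop:9} and conclude that $a_n$ is a linear combination of the modes $p_j^{\,n}$ over the roots $p_j$ of $\mu_\M$. But your proposed elimination of the modes other than $p_0^{\,n}$ does not work: the bound $a_n\in[0,1]$ only kills modes with $|p_j|>1$, and positivity/Perron--Frobenius considerations, if anything, favour the \emph{slowest-decaying} mode (the root of largest modulus among those in the unit disc), not $p_0^{\,n}$, which is the \emph{fastest-decaying} one. The monoid $\M_2$ of Section~\ref{sec:illustr-exampl} is a concrete witness: $\mu_{\M_2}(X)=1-5X+5X^2$ has two roots $q_0<q_1$ both in $(0,1)$, and the sequence $a_n=Aq_0^n+Bq_1^n$ with $A,B\geq0$, $A+B=1$ satisfies the recurrence, lies in $[0,1]$, and has $a_0=1$; nothing in your list of constraints excludes it. The missing ingredient is an \emph{upper} bound $a_n\leq C\,p_0^{\,n}$, which the paper obtains from the Hat Lemma (Lemma~\ref{lem:7}): one constructs a trace $w$ such that the cylinders $\up(u\cdot w)$, for $u$ ranging over traces of a fixed length, are pairwise disjoint, so that summing their equal measures against the count $\#\M_{n-|w|}\sim C(1/p_0)^{n}$ yields~\eqref{eq:33}. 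Only with this geometric decay in hand can one argue (via the rational form of $S(X)=\sum a_nX^n$ with denominator $\mu_\M(1/X)X^\ell$ up to a constant) that every pole $1/p_j$ with $j>0$ must be cancelled by the numerator, forcing $S(X)=1/(1-p_0X)$. Your "sub-monoid compatibility constraints" are never made precise and cannot substitute for this disjointness construction, which is the genuinely new idea in the paper's proof of this point.
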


Point \ref{item:13} in Theorem \ref{thr:6} appears as a confirmation
of the central role of Bernoulli measures.

Having identified at least one Bernoulli measure for each irreducible
trace monoid, we are able to construct many others by considering
small variations around the value $(p_0,\ldots,p_0)$ for the family of
characteristic numbers.

\begin{proposition}
  \label{prop:7}
  Let $\M=\M(\Sigma,I)$ be irreducible with
  $|\Sigma|>1$. There exists a continuous family of different
  Bernoulli measures on~$\B\M$\,.
\end{proposition}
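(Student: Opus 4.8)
The plan is to produce the family by perturbing the characteristic numbers of the uniform Bernoulli measure. By Theorem~\ref{thr:1}, Bernoulli measures on $\B\M$ are in bijection with \goodname\ valuations, and a valuation is determined by its characteristic numbers $(p_\alpha)_{\alpha\in\Sigma}\in(\RR_+^*)^\Sigma$; moreover, by Definition~\ref{def:3}, a tuple $(p_\alpha)$ is realized by a \goodname\ valuation if and only if it satisfies the normalization \eqref{eq:41}$(a)$, namely $\mu_\M\bigl((p_\alpha)_{\alpha\in\Sigma}\bigr)=0$, together with the strict positivity conditions \eqref{eq:41}$(b)$. By Theorem~\ref{thr:6} the uniform point $\mathbf{p}_0=(p_0,\dots,p_0)$ is such a tuple. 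Hence it suffices to exhibit a continuum of tuples near $\mathbf{p}_0$ lying on the hypersurface $\{\mu_\M=0\}$ and still satisfying \eqref{eq:41}$(b)$: distinct tuples yield distinct measures, because $p_\alpha=\pr(\up\alpha)$, so two Bernoulli measures with different characteristic numbers already disagree on some elementary cylinder $\up\alpha$.

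The key computation is the dependence of $\mu_\M$ on a single variable. Since every letter occurs at most once in a clique, $\mu_\M$ is affine in each $X_\alpha$. Fixing $\alpha\in\Sigma$ and splitting the defining sum \eqref{eq:19} according to whether a clique contains $\alpha$, the change of variable $c=\{\alpha\}\cdot\delta$, for $\delta$ ranging over $\C_{\M_{\{\alpha\}}}$, identifies the coefficient of $X_\alpha$ and yields
\begin{equation*}
\mu_\M = A_\alpha - X_\alpha\,\mu_{\M_{\{\alpha\}}}\bigl((X_\beta)_{\beta\in\Sigma_{\{\alpha\}}}\bigr),
\end{equation*}
where $A_\alpha$ does not involve $X_\alpha$; equivalently $\partial\mu_\M/\partial X_\alpha=-\mu_{\M_{\{\alpha\}}}$. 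Now $\{\alpha\}\in\Cstar_\M$, so condition \eqref{eq:41}$(b)$ at the uniform point gives $\mu_{\M_{\{\alpha\}}}(\mathbf{p}_0)>0$; in particular the coefficient of $X_\alpha$ does not vanish at $\mathbf{p}_0$. This nondegeneracy is the heart of the argument.

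Fix two distinct letters $\alpha,\beta$, which exist since $|\Sigma|>1$. By the displayed factorization the equation $\mu_\M=0$ solves explicitly for $X_\alpha$ as the rational function $p_\alpha=A_\alpha/\mu_{\M_{\{\alpha\}}}$, defined and continuous wherever $\mu_{\M_{\{\alpha\}}}>0$ and equal to $p_0$ at $\mathbf{p}_0$. Define, for $t$ in a small interval $J$ around $p_0$, the tuple $\mathbf{p}(t)$ by $p_\beta=t$, $p_\gamma=p_0$ for $\gamma\neq\alpha,\beta$, and $p_\alpha=A_\alpha/\mu_{\M_{\{\alpha\}}}$ evaluated at these values. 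Then $t\mapsto\mathbf{p}(t)$ is continuous, $\mathbf{p}(p_0)=\mathbf{p}_0$, and each $\mathbf{p}(t)$ satisfies \eqref{eq:41}$(a)$ by construction. The conditions \eqref{eq:41}$(b)$ form a finite conjunction of strict polynomial inequalities that hold at $\mathbf{p}_0$; being open, they persist on a possibly smaller $J$, where also $\mathbf{p}(t)\in(\RR_+^*)^\Sigma$. For each $t\in J$ we thus obtain a \goodname\ valuation, hence by Theorem~\ref{thr:1} a Bernoulli measure $\pr_t$, with $t\mapsto\pr_t(\up u)$ continuous for every $u\in\M$. Finally $\pr_t(\up\beta)=t$, so the $\pr_t$ are pairwise distinct, which gives the desired continuous family. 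Everything beyond the coefficient identification is routine continuity together with the openness of \eqref{eq:41}$(b)$.
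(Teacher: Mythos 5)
Your proof is correct and takes essentially the same route as the paper's: perturb the characteristic numbers around the uniform point $(p_0,\dots,p_0)$, exploit the affine dependence of $\mu_\M$ on a single variable to solve the normalization \eqref{eq:41}$(a)$, and use the openness of the strict inequalities \eqref{eq:41}$(b)$ to keep the perturbed valuation \goodname. The only notable difference is in how the nonvanishing of the linear coefficient is obtained: you identify it directly as $-\mu_{\M_{\{\alpha\}}}$ and invoke condition $(b)$ at the uniform point (i.e.\ Proposition~\ref{cor:5}), whereas the paper first shows the constant term $B_0$ (the M\"obius polynomial of $\Sigma\setminus\{a\}$ evaluated at $p_0$) is nonzero via a growth-rate comparison and then deduces $A_0\neq0$ from $p_0A_0+B_0=0$ --- your variant is slightly more direct, and the fact that your family is one-parameter rather than $(|\Sigma|-1)$-parameter is immaterial for the statement.
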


The proofs of Theorem~\ref{thr:6} and of Proposition~\ref{prop:7} are
postponed to~\S~\ref{sec:unif-meas-exist}.

\section{Examples and Applications}
\label{sec:exampl-appl}

\subsection{Two illustrative examples}
\label{sec:illustr-exampl}

We illustrate the above results on two specific examples.  

We first concentrate on $\M_1=\bigl\langle a,b,c\;|\;a\cdot b=b\cdot
a\bigr\rangle$, and provide a complete description of associated
Bernoulli measures.  Non-empty cliques of $\M_1$ are given by
$\{a,b,c,a\cdot b\}$, and the Cartier-Foata acceptor graph is depicted
in Figure~\ref{fig:carteirfoatacliques}.
\begin{figure}[t]
\centerline{\SelectTips{cm}{}
\xymatrix@C=4em@R=1.6em{
&\etat{a\cdot b}\ar@/_/[dl]\ar@/^/[dr]\ar@/^/[dd]\POS!R(.2)!U(.5)\ar@(ur,ul)[]!L(.2)!U(.5)
\\
\etat{a}\ar@/_/[dr]\POS!U(.2)!L(.5)\ar@(ul,dl)[]!D(.2)!L(.5)&&
\etat{b}\ar@/^/[dl]\POS!D(.2)!R(.5)\ar@(dr,ur)[]!U(.2)!R(.5)
\\
&\etat{c}\ar@/_/[ul]\ar@/^/[ur]\ar@/^/[uu]\POS!L(.2)!D(.5)\ar@(dl,dr)[]!R(.2)!D(.5)
}}\par\bigskip\medskip
\caption{\textsl{Cartier-Foata acceptor graph of $\M_1=\langle
    a,b,c\;|\;a\cdot b=b\cdot a\rangle$}}
\label{fig:carteirfoatacliques}
\end{figure}
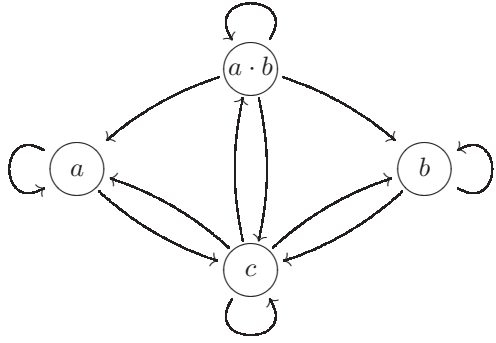
 The M\"obius polynomials of
$\M_1$ is $ \mu_{\M_1}(X)=1-3X+X^2$\,, with roots
$(3\pm \sqrt5)/2$\,. Keeping the same symbols $a,b,c$ to
denote the characteristic numbers of a generic valuation
$f:\M_1\to\RR_+^*$, the M\"obius transform $h$ of $f$ is given as
follows:
\begin{equation*}
  \begin{array}{l|ccccc}
    \text{clique $\gamma$}&0&a&b&c&a\cdot b\\
\text{valuation $f(\gamma)$}&1&a&b&c&ab\\
\text{M\"obius transform $h(\gamma)$}&1-a-b-c+ab&a-ab&b-ab&c&ab
  \end{array}
\end{equation*}

\Goodname\ valuations are thus determined, according to
Definition~\ref{def:3}, by the following conditions on parameters:
\begin{align}
\label{eq:15}
  1-a-b-c+ab&=0\,,&
a(1-b)&>0\,,&
b(1-a)&>0\,,&c&>0\,,&ab&>0\,.
\end{align}
 
It follows from Theorem~\ref{thr:1} that Bernoulli measures on $\M_1$
are in bijective correspondence with the set of triples
$(a,b,c)\in(\RR_+^*)^3$ solutions of~\eqref{eq:15}.  The set of
admissible triples forms a surface, a plot of which is given in
Figure~\ref{fig:piojojhkjnsz}.

\begin{figure}[hb]
  \centering
\input{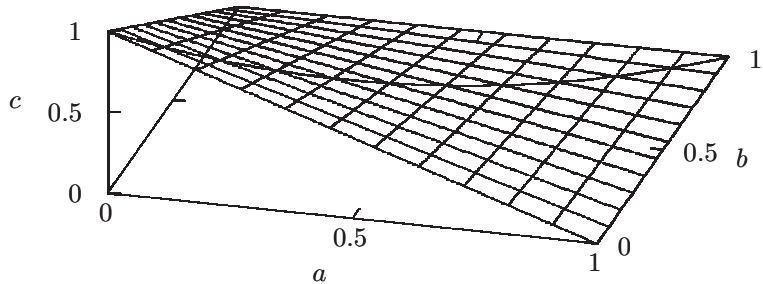}
  \caption{\textsl{Plot of the possible values for the
      characteristic numbers of a Bernoulli measure on~$\M_1$}}
\label{fig:piojojhkjnsz}
\end{figure}

We can easily compute the transition matrix $P$ associated with a
generic Bernoulli measure with parameters $(a,b,c)$ solution
of~\eqref{eq:15}. The normalization factor $g$ from
Theorem~\ref{thr:2} is equal to $1$ on all maximal cliques, which are
$a\cdot b$ and~$c$. We observe that $g(a)=a-ab+c=1-b$, taking into
account that $1-a-b-c+ab=0$. Similarly, $g(b)=1-a$. According to
formula~\eqref{eq:12} stated in Theorem~\ref{thr:2}, and indexing the
rows and columns of the transition matrix according to the cliques
$(a,b,c,a\cdot b)$ in this order, we get:
\begin{gather*}
  P=
  \begin{pmatrix}
a&0&1-a&0\\    
0&b&1-b&0\\
a-ab&b-ab&c&ab\\
a-ab&b-ab&c&ab
  \end{pmatrix}
\end{gather*}

It is readily checked by hand on this example, and this is true in
general, that conditions~\eqref{eq:15} ensure that the above matrix
has all its entries non negative and that all lines sum up to~$1$.

According to Theorem~\ref{thr:6}, the only uniform measure $\pr_0$ on
$\M_1$ is determined by the root $p_0=(3-\sqrt5)/2=0.382\cdots$
of~$\mu_{\M_1}$\,, and satisfies $\pr_0(\up u) = p_0^{|u|}$ for all
$u$ in $\M$.  Note that, for this example, the other root of
$\mu_{\M_1}$ is outside $(0,1)$, so it is immediate that only $p_0$
can correspond to a probability. But the M\"obius polynomial might
have several roots within $(0,1)$, as the next example reveals.

\medskip

Consider the trace monoid $\M_2=\M(\Sigma_2,I_2)$ with
$\Sigma_2=\{a_1,\ldots,a_5\}$, and which associated dependence
relation $D_2$ is depicted in Figure~\ref{fig:dependas}.
\begin{figure}[htb]
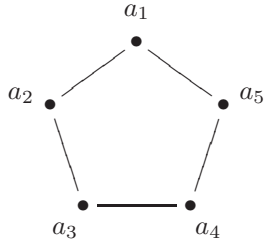

  \begin{equation*}
\xy{
\xypolygon5"A"{~:{(2,2):(3,3):}~={0}{\hbox{\strut$\;\bullet\;$}}}%
\xypolygon5"B"{~:{(2,2):(4,4):}~><{@{}}~={0}{a_{\xypolynode}}}
}%
\endxy
  \end{equation*}
  \caption{\textsl{Dependence relation $D_2$ for $\M_2$\,. Self-loops
      are omitted.}}
  \label{fig:dependas}
\end{figure}
Then the M\"obius polynomial is $\mu_{\M_2}(X)=1-5X+5X^2$\,, with
roots $q_0=1/2 - \sqrt5/10$ and
$q_1=1/2 + \sqrt5/10$\,.  Hence, $\mu_{\M_2}$~has its two
roots within $(0,1)$, so we need to use the full statement of
Theorem~\ref{thr:6} point~\ref{item:12} in order to rule out $q_1$ and
retain only $q_0$ as defining a uniform measure.

This can be double-checked by hand on this example. Consider the
valuation $f(u)=q_1^{|u|}$\,. Let $h$ be the M\"obius transform
of~$f$. We have, for $i\in \{1,\dots ,5\}$: $h(a_i)=
q_1(1-2q_1)=-q_1/\sqrt5 <0$\,.  Therefore, the valuation $f$ is not
\goodname\ and $q_1$ does not qualify to define a uniform measure.

\subsection{Computing the speedup}
\label{sec:comp-speed-trace}

In a trace monoid, what is the ``average parallelism''? Or what is the
average speedup of the parallel execution time compared to the
sequential one? Or what is the average density of a heap? The
questions are natural and have been extensively
studied~\cite{saheb89,CePe,krob03,bertoni08}. Obviously the
probability assumptions need to be specified for the questions to make
sense.

In the remaining of~\S~\ref{sec:comp-speed-trace}, we consider an
irreducible trace monoid~$\M$.

Let $\height(u)$ denote the \emph{height} of a trace $u\in\M$, which
is equivalently defined (see also~\S~\ref{sec:cartier-foata-sub})
either as the number of cliques in the Cartier-Foata decomposition
of~$u$,
or as the height of the heap of
pieces associated with~$u$, and can also be interpreted as the
``parallel execution time'' of~$u$.

One standard approach is to define the average parallelism as the
quantity $\lambda_{\M}= \lim_{n\rightarrow \infty} n/\height(x_1\cdot
x_2 \cdot\ldots\cdot x_n)$ where $(x_n)_n$ is an independent and
uniformly distributed sequence of $\Sigma$-valued random
variables. The limit exists indeed and is constant with probability
one, by a sub-additivity argument~\cite{saheb89}. The quantity
$\lambda_{\M}$ is non-algebraic except for small trace
monoids~\cite{krob03}, and is NP-hard even to approximate~\cite{BGT}.

Another approach, more closely related to our probabilistic model, is
as follows. For all integer $n\geq 0$, set $\M_n = \{u\in \M \tq |u|
=n \}$ and let $\nu_n$ be the uniform probability distribution on the
finite set~$\M_n$\,.  Let $\varphi:\M\to\RR_+$ be the
function defined by:
\begin{align*}
\varphi(0)&=1\,,& x\neq0\quad \varphi(x)&=\frac{|x|}{\height(x)}\,.
\end{align*}

Clearly, $1\leq\varphi(\cdot)\leq K$ holds for $K$ the maximal size of
cliques.  Define, assuming existence, the two limits:
\begin{align}
\label{eq-invspeed} 
  \speed_{\M} & =  \lim_{n\rightarrow \infty}  \esp_{\nu_n}1/\varphi(\cdot)  =  \lim_{n\rightarrow \infty} 
\frac{1}{\# \M_n} \sum_{u\in \M_n}
  \frac{\height(u)}{n} \\
\label{eq-speed}
\rho_{\M} & =  \lim_{n\rightarrow \infty}  \esp_{\nu_n}\varphi(\cdot)   =   \lim_{n\rightarrow \infty} 
\frac{1}{\# \M_n} \sum_{u\in \M_n}\frac{n}{\height(u)}   
\end{align}

The quantities $\rho_{\M}$ and $\speed_{\M}$ have been studied under
the respective names of \emph{speedup} and \emph{average
  parallelism}. In~\cite{krob03}, the limit in~\eqref{eq-invspeed} is
shown to exist and to be an algebraic number (the notation is
``$\lambda_{\mathbb{M}}$'' in~\cite{krob03}). The method is based on
manipulations of the bi-variate generating series
$L(x,y)=\sum_{u\in\M}x^{|u|}y^{\height(u)}$\,. The expression obtained
for $\speed_{\M}$ is a ratio:
\begin{gather}
\label{eq:22}
\speed_{\M} =  \frac{\bigl[(\partial L/\partial y )(x,1)\cdot ({p_0}
  -x)^2\bigl]_{|x={p_0}}}{{p_0}\cdot \bigl[L(x,1)\cdot ({p_0}
  -x)\bigl]_{|x={p_0}}} \,,
\end{gather}
where $p_0$ is the root of smallest modulus of the M\"obius polynomial
of~$\M$. The above expression is tractable to a certain extent since the series
$L(x,y)$ is shown to be rational~\cite[Prop. 4.1]{krob03}.

\medskip

Using the results of the present paper, we obtain much more. Let $\pr$
be the uniform measure on~$\B\M$. Recall that the sequence
$(C_n)_{n\geq1}$ of cliques is then an irreducible and aperiodic
Markov chain on~$\Cstar$. Let $\pi$ be the stationary measure of this
chain, that is to say, $\pi$~is the unique probability vector on
$\Cstar$ satisfying: $\pi P=\pi$, where $P$ is the transition matrix
given in Theorem~\ref{thr:2}.

The distribution $\nu_n$ on $\M_n$ induces a distribution of the
ratios $\varphi(x)$ on~$[1,K]$, as well as a distribution of the
inverses $1/\varphi(x)$ on $[1/K,1]$. We denote respectively
by~$\varphi_*\nu_n$\,, and by~$(1/\varphi)_*\nu_n$\,, the induced
distributions.

\begin{proposition}
The limits in\/ \eqref{eq-invspeed}--\eqref{eq-speed} exist. They 
are algebraic numbers, and satisfy the following formulas: 
\begin{align}
\label{eq:7}
      \rho_\M&=\sum_{c\in\Cstar}\pi(c)|c|\,,& \speed_{\M}&= \frac1{\rho_{\M}}\,. 
\end{align}

Furthermore, the sequences of probability distributions
$(\varphi_*\nu_n)_{n\geq1}$ and\linebreak
$((1/\varphi)_*\nu_n)_{n\geq1}$ converge weakly respectively towards
the Dirac measure~$\delta_{\rho_\M}$\,, and toward the Dirac
measure~$\delta_{\speed_\M}$\,.
\end{proposition}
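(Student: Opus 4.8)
The plan is to connect the combinatorial averages in \eqref{eq-invspeed}--\eqref{eq-speed} to the ergodic behaviour of the Markov chain $(C_n)_{n\geq1}$ governing the uniform measure $\pr$, via a transfer between the length-indexed measures $\nu_n$ and the height-indexed objects of the Cartier-Foata subshift. The central observation is that under the uniform measure $\pr_0$, by Theorem~\ref{thr:6} we have $\pr_0(\up u)=p_0^{|u|}$, so that $\pr_0$ weights traces only through their length and not through their height or any finer structure. This is the precise sense in which the uniform measure is the right probabilistic avatar of the counting measures $\nu_n$.

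First I would set up the bookkeeping by sorting traces according to their height rather than their length. Writing $\M^{(k)}=\{u\in\M\tq\height(u)=k\}$ for the set of traces of height $k$, the Cartier-Foata bijection identifies $\M^{(k)}$ with the length-$k$ paths in the acceptor graph $(\Cstar,\to)$. The key quantitative input is that, under $\pr_0$, the cylinder probability of a trace $u$ with Cartier-Foata decomposition $c_1\to\ldots\to c_n$ factors through formula~\eqref{eq:13}, namely $\pr_0(C_1=c_1,\ldots,C_n=c_n)=p_0^{|c_1|}\cdots p_0^{|c_{n-1}|}h(c_n)$. Since $|u|=|c_1|+\cdots+|c_n|$ and $\height(u)=n$, the ratio $\varphi(u)=|u|/\height(u)$ is exactly the \emph{average clique size along the path}, i.e. $\frac1n\sum_{i=1}^n|c_i|$. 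This is the structural identity that turns the speedup into an ergodic average: along a typical trajectory of the chain, the law of large numbers for the additive functional $c\mapsto|c|$ gives $\frac1n\sum_{i=1}^n|C_i|\to\sum_{c\in\Cstar}\pi(c)|c|=:\rho_\M$ almost surely, where $\pi$ is the stationary distribution, because the chain is irreducible and aperiodic (Theorem~\ref{thr:2} point~\ref{item:8}) and the state space $\Cstar$ is finite. Consequently $\varphi\to\rho_\M$ and $1/\varphi\to1/\rho_\M$ along $\pr_0$-almost every trajectory, which already suggests both the formula $\rho_\M=\sum_c\pi(c)|c|$ and the reciprocal relation $\speed_\M=1/\rho_\M$.

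The main obstacle, and the step I expect to require the most care, is transferring this ergodic statement about the \emph{height-indexed} measure $\pr_0$ into the assertion about the \emph{length-indexed} uniform measures $\nu_n$ appearing in \eqref{eq-invspeed}--\eqref{eq-speed}. The two families weight traces differently: $\nu_n$ is uniform over $\M_n=\{|u|=n\}$, whereas the marginal of $\pr_0$ on traces of height $k$ is the Markov law above. The bridge I would build rests on the Patterson--Sullivan / M\"obius identity mentioned in the introduction, that the Poincar\'e series $\sum_{u\in\M}z^{|u|}$ is the reciprocal of the M\"obius polynomial, so that $\#\M_n\sim C\,p_0^{-n}$ grows like the inverse of the smallest root to the power $n$, with a dominant-singularity/Perron--Frobenius analysis of the height-refined generating function $L(x,y)=\sum_u x^{|u|}y^{\height(u)}$ from~\cite{krob03}. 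Concentration of $\height(u)/|u|$ around its mean under $\nu_n$ then follows from the fact that $L(x,y)$ is rational with a simple dominant pole in $x$ at $x=p_0$ whose residue, differentiated in $y$ at $y=1$, yields precisely the ratio in~\eqref{eq:22}; identifying that ratio with $\sum_c\pi(c)|c|$ is a linear-algebra computation relating the left/right Perron eigenvectors of $P$ (which encode $\pi$ and the numbers $h$, $g$ of Theorem~\ref{thr:2}) to the derivatives of the characteristic polynomial of the transfer matrix. Since $1\leq\varphi\leq K$ is bounded, weak convergence of $\varphi_*\nu_n$ and $(1/\varphi)_*\nu_n$ to Dirac masses is equivalent to convergence of all moments, and the concentration just described gives that the variance of $\varphi$ under $\nu_n$ tends to $0$; the Dirac limits $\delta_{\rho_\M}$ and $\delta_{\speed_\M}$, together with $\esp_{\nu_n}\varphi\to\rho_\M$ and $\esp_{\nu_n}(1/\varphi)\to\speed_\M=1/\rho_\M$, then follow. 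Algebraicity of both numbers is inherited from the algebraicity of $p_0$ (a root of the integer M\"obius polynomial) and the rationality of $L$, exactly as in~\cite{krob03}.

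I would finally note the one genuinely delicate point hidden in the reciprocal formula: $\speed_\M=1/\rho_\M$ is \emph{not} a pointwise identity between $\esp_{\nu_n}\varphi$ and $1/\esp_{\nu_n}(1/\varphi)$ for finite $n$ (Jensen forbids it), and it is precisely the concentration of $\varphi$ onto a single value $\rho_\M$ in the limit that collapses the gap. Thus the weak-convergence statement is logically prior to the scalar formulas, and the proof must establish the Dirac limits first and only then read off both $\rho_\M$ and $\speed_\M$ as the common location of the limiting masses; the equality $\speed_\M\cdot\rho_\M=1$ is then automatic since $\delta_{\rho_\M}$ and $\delta_{1/\rho_\M}$ are pushed forward from the same almost-sure limit under the continuous map $t\mapsto1/t$ on the compact interval $[1,K]$.
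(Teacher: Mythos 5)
Your first half---reading $\varphi(u)$ as the ergodic average $\frac1n\sum_{i=1}^n|C_i|$ along the Cartier-Foata chain and invoking the law of large numbers for the finite, irreducible, aperiodic chain $(C_n)_{n\geq1}$ under the uniform measure to get almost-sure convergence to $\rho=\sum_{c\in\Cstar}\pi(c)|c|$---is exactly the paper's starting point, and your closing remark that the Dirac limits must come first and that $\speed_\M=1/\rho_\M$ is \emph{not} a finite-$n$ Jensen identity is also right. The divergence, and the gap, is in the transfer from the height-indexed measure $\pr$ to the length-indexed counting measures $\nu_n$. You assert that concentration of $\height(u)/|u|$ under $\nu_n$ ``follows from the fact that $L(x,y)$ is rational with a simple dominant pole in $x$ at $x=p_0$ whose residue, differentiated in $y$ at $y=1$, yields precisely the ratio in~\eqref{eq:22}.'' That derivative controls only the first moment $\esp_{\nu_n}[\height(\cdot)/n]$---which is all that \cite{krob03} extracts from it. Concentration requires the variance under $\nu_n$ to vanish, i.e.\ a second-moment or quasi-powers analysis of the perturbed dominant singularity $x_0(y)$ of $x\mapsto L(x,y)$ for $y$ near $1$ (analyticity of $x_0(y)$ via Perron--Frobenius for the weighted transfer matrix, then a uniform expansion of $[x^n]L(x,y)$). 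Nothing in your sketch produces this, and without it the weak convergence to Dirac masses---which, as you yourself observe, is logically prior to both scalar formulas---is unproved. The ``linear-algebra computation'' identifying the analytic limit with $\sum_{c\in\Cstar}\pi(c)|c|$ is likewise asserted rather than executed, and it is not immediate: the transfer matrix of $L(x,y)$ is a weighted adjacency matrix whose Perron data at $(p_0,1)$ must first be matched with the chain $P$ of Theorem~\ref{thr:2}.

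The paper bypasses the generating-function route entirely with a soft minorization, which is the idea your proposal is missing. It stops the chain at $P_n=\max\{q\tq|C_1\cdot\ldots\cdot C_q|\leq n\}$, sets $A_n=C_1\cdot\ldots\cdot C_{P_n}$, and deduces from~\eqref{eq:13} together with Proposition~\ref{prop:4} and the asymptotics $\#\M_n\sim Cp_0^{-n}$ that $\pr(A_n=y)\geq C_2/\#\M_n$ uniformly over $y\in\M_n$. This single inequality converts the almost-sure statement you already have into the one you need: $\nu_n\bigl(\varphi\notin[\rho-\varepsilon,\rho+\varepsilon]\bigr)\leq C_2^{-1}\,\pr\bigl(\varphi(A_n)\notin[\rho-\varepsilon,\rho+\varepsilon]\bigr)\to0$. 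Everything else (boundedness of $\varphi$ in $[1,K]$, hence convergence of the expectations, algebraicity of $\rho$ from that of $p_0$ and the linear system defining $\pi$) then goes through as you describe. Your analytic route can in principle be completed, but as written the concentration step would fail.
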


The formula in \eqref{eq:7} provides a more tractable expression for
$\speed_{\M}$ than the one in~\eqref{eq:22}. The concentration results
are new. In a forthcoming work based on spectral methods, the first
author and S.~Gou\"ezel show the weak convergence of $\sqrt
n(1/\varphi(\cdot)-\gamma_\M)$ towards a normal law $\mathcal
N(0,\sigma^2)$, non degenerated unless $\M$ is a free monoid.

\begin{proof}
  We briefly sketch the proof.  Set $\rho=\sum_{c\in \Cstar} \pi(c)
  |c|$. Clearly, $\rho$~is algebraic since $p_0$ is algebraic, and
  since the coefficients $\pi(c)$ are solutions of a linear system
  involving only $p_0$ and its powers.

For every integer $n\geq1$, let
$Y_n=C_1\cdot\ldots\cdot C_n$\,, of height $n$ by construction, and
let:
\begin{align*}
  A_n&=Y_{P_n}\,,&P_n&=\max\{q\in\NN\tq|Y_q|\leq n\}\,.
\end{align*}

Clearly, $P_n\geq n/K$ holds and thus
$\lim_{n\to\infty}P_n=\infty$\,. By the Ergodic theorem for Markov
chains (see~\cite[Part.~I, \S~15, Theorem 2]{chung60}, or
\cite[Theorem~4.1 p.~111]{brem}), the ratios
$\varphi(Y_n)=|Y_n|/\height(Y_n)=(|C_1|+\ldots+|C_n|)/n$ converge
\pas\ towards~$\rho$, and so do the ratios
$\varphi(A_n)=\varphi(Y_{P_n})$\,.

Using the expression~\eqref{eq:13} of Theorem~\ref{thr:2} on the one
hand, and Proposition~\ref{prop:4} below on the other hand, we see
that there is a constant $C_1>0$ such that $\pr(A_n=y)\geq C_1p_0^n$
for all traces $y$ belonging to the range of values of~$A_n$\,. Hence,
by the asymptotics $p_0^{-n}\sim C(\#\M_n)$ recalled in
Theorem~\ref{thr:5} below, there is a constant $C_2>0$ such that
$\pr(A_n=y)\geq C_2/(\#\M_n)$\,, for all such traces~$y$.

Then, let $\varepsilon>0$. We have:
\begin{align*}
  \nu_n\bigl(\varphi(x)\notin[\rho-\varepsilon,\rho+\varepsilon])&=
\sum_{y\in\M_n}\frac1{\#\M_n}\un_{\{\varphi(y)\notin[\rho-\varepsilon,\rho+\varepsilon]\}}\\
&\leq\frac1{C_2}\sum_{y\in\M_n}\un_{\{\varphi(y)\notin[\rho-\varepsilon,\rho+\varepsilon]\}}\pr(A_n=y)\\
&\leq\frac1{C_2}\pr\bigl(\varphi(A_n)\notin[\rho-\varepsilon,\rho+\varepsilon]\bigr)\to_{n\to\infty}0
\end{align*}
since the convergence $\varphi(A_n)\to\rho$ holds $\pr$-almost surely,
and thus in distribution. This proves the convergence in distribution
of $\varphi(x)$ towards~$\delta_\rho$\,. The convergence in
distribution of $1/\varphi(x)$ towards $\delta_{1/\rho}$ follows. And
this implies of course the convergence of $\esp_{\nu_n}\varphi(\cdot)$
towards~$\rho$, and of $\esp_{\nu_n}1/\varphi(\cdot)$
towards~$1/\rho$.  It follows that $\rho_{\M}=\rho$ and
$\speed_{\M}=1/\rho$.
\end{proof}

\smallskip

For instance, for the trace monoids $\M_1$ and $\M_2$
analyzed in~\S~\ref{sec:illustr-exampl}, we obtain, after computation
of the associated stationary measure~$\pi$\,:
\begin{align*}
  \rho_{\M_1} &= 5(7 -\sqrt{5}) /22 = 1.0827 \cdots&
  \rho_{\M_2}&= (29 - \sqrt{5}) /22 = 1.2165 \cdots \\
\gamma_{\M_1}&=(7+\sqrt5)/10=0.924\cdots&
\gamma_{\M_2}&=(29+\sqrt5)/38=0.822\cdots
\end{align*}
consistently with the results of~\cite[Appendix~B]{krob03}, where the
case of $\M_1$ is included in the table.  So parallelism increases the
speed of execution by about $8\%$ in the monoid $\M_1$ and by about
$22\%$ in the monoid $\M_2$.

\section{Auxiliary tools}
\label{part:auxiliary-tools}

This section introduces auxiliary tools needed for the proofs of the
previously stated results.

\subsection{Elementary cylinders and sequences of cliques}
\label{sec:order-trac-extend}

Let $\M=\M(\Sigma,I)$ be a trace monoid. In this subsection, we
establish the correspondence between points of the boundary~$\B\M$,
and infinite sequences of non-empty cliques satisfying the
Cartier-Foata condition. We also describe how the order on traces
transposes to their Cartier-Foata normal form.

\medskip

To each non-empty trace $u\in\M$ of Cartier-Foata normal form
$d_1\to\ldots\to d_n$\,, we associate an infinite sequence
$(c_k)_{k\geq1}$ of cliques, defined as follows: $c_k=d_k$ if $1\leq
k\leq n$, and $c_k=0$ for $k>n$. The sequence $(c_k)_{k\geq1}$ is
called the \emph{extended Cartier-Foata decomposition} of~$u$,
abbreviated \ECF.

Recall from~\S~\ref{sec:cartier-foata-sub} that~$\height(u)$, the
height of the trace~$u$, is defined as the number of cliques in the
Cartier-Foata normal form of~$u$.  Equivalently, $\height(u)$~is the
number of non empty cliques in the \ECF\ decomposition of~$u$.

The Cartier-Foata decomposition establishes, for each integer
$n\geq1$, a bijection between the set of traces of height~$n$ and a
subset of the product~$\C^n$\,. How the ordering between traces is
read on their Cartier-Foata decompositions is the topic of next
results. In particular, the obtained order on Cartier-Foata sequences
is strictly coarser in general than the order induced by the product
order on~$\C^n$\,.

In the following result and later on, we make use of the notion of
parallel cliques introduced in~\S~\ref{sec:trace-monoids-their}, by
writing $c\indep c_1,\ldots,c_n$ if $c\indep c_i$ for all integers
$i\in\{1,\ldots,n\}$.

\begin{lemma}
  \label{prop:1}
  Let $\M=\M(\Sigma,I)$ be a trace monoid and let $u,v\in\M$ be two
  non-empty traces. Let $c_1\to\ldots\to c_n$ and $d_1\to\ldots\to
  d_p$ be the Cartier-Foata decompositions of $u$ and of~$v$. Then
  $u\leq v$ if and only $n\leq p$ and there are $n$ cliques
  $\gamma_1,\ldots,\gamma_n$ such that, for all $i\in\{1,\ldots,n\}$:
\begin{enumerate}
\item\label{item:10} $\gamma_i\indep c_i,\ldots,c_n$\,; and
\item\label{item:11} $d_i=c_i\cdot\gamma_i$\,.
\end{enumerate}
\end{lemma}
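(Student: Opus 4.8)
The plan is to prove the characterization by relating left-divisibility in the trace monoid to a layer-by-layer decomposition of the prefix. The key structural fact I would exploit is the heap interpretation: the Cartier-Foata normal form is the sequence of successive layers of the heap, and a prefix $u\leq v$ corresponds to a ``downward-closed'' subheap of the heap of $v$. The clique $\gamma_i$ will be exactly the collection of pieces that are present in layer $i$ of $v$ but \emph{not} in the prefix $u$; intuitively $\gamma_i$ consists of the letters that, relative to $u$, can slide down into position $i$ because they are independent of everything in $u$ lying at or below their level.

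First I would prove the reverse implication, which is the easier direction. Assuming conditions \ref{item:10} and \ref{item:11}, I would show $u\leq v$ by exhibiting an explicit witness $w$ with $v=u\cdot w$. The natural candidate is $w=\gamma_1\cdot\ldots\cdot\gamma_n\cdot c_{n+1}\cdot\ldots\cdot c_p$ (using that $v=d_1\cdots d_p$). The condition $\gamma_i\indep c_i,\ldots,c_n$ lets me commute each $\gamma_i$ leftward past the later $c_j$'s, so that $u\cdot w$ can be rewritten by grouping $c_i$ with $\gamma_i$ to produce $c_i\cdot\gamma_i=d_i$ in each layer; carrying out these commutations carefully and invoking cancellativity (stated in the excerpt, following \cite{cartier69}) yields $u\cdot w=v$, hence $u\leq v$.

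For the forward implication, I would assume $u\leq v$ and construct the $\gamma_i$. Write $v=u\cdot w$ for some $w\in\M$. Since length and height are both monotone under left division, $n\leq p$ follows immediately from the additivity of length together with the fact that appending pieces cannot decrease the number of layers; I would justify $n=\height(u)\leq\height(v)=p$ directly from the heap picture or from a short induction. The substance is defining $\gamma_i$. I would define $\gamma_i=d_i-c_i$ in the sense of removing from layer $d_i$ of $v$ the letters of layer $c_i$ of $u$, and then verify both that this is a well-defined clique satisfying $d_i=c_i\cdot\gamma_i$, and that the independence condition $\gamma_i\indep c_i,\ldots,c_n$ holds. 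The independence is what encodes the Cartier-Foata admissibility constraints: a letter of $\gamma_i$ sitting in layer $i$ of $v$ but absent from $u$ must be independent of all the $u$-pieces at its own level and below, for otherwise it would have been blocked and could not be ``added on top'' of $u$ to rebuild $v$.

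The main obstacle I anticipate is the forward direction's bookkeeping: making rigorous the informal ``piece in $v$ not in $u$'' definition and proving the independence relations, since the Cartier-Foata decomposition of $v=u\cdot w$ is not simply the concatenation of the decompositions of $u$ and $w$ (the correspondence between cylinders is noted to be non-trivial in the excerpt). I would handle this by an induction on $\height(v)$ or on $|w|$, peeling off one piece of $w$ at a time and tracking how each added piece either joins an existing layer or creates a new one, maintaining the invariant that the current intermediate trace admits the desired $\gamma_i$-decomposition relative to $u$. The delicate point is ensuring that when a new piece is inserted it lands in the correct layer and respects $\gamma_i\indep c_i,\ldots,c_n$, which I expect to follow from the definition of Cartier-Foata admissibility ($c_i\to c_{i+1}$) combined with the maximality that defines the normal form. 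I would keep the commutation arguments organized by always reducing to the defining relation $\alpha\cdot\beta=\beta\cdot\alpha$ for $(\alpha,\beta)\in I$ and to cancellativity, rather than manipulating heaps informally.
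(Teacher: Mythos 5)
Your proposal is correct and follows essentially the same route as the paper: the easy direction uses the same explicit witness $w=\gamma_1\cdot\ldots\cdot\gamma_n\cdot d_{n+1}\cdot\ldots\cdot d_p$ obtained by commuting the $\gamma_i$ past the later layers, and your forward direction (induct on $|w|$, insert the pieces of $w$ one at a time into the lowest layer they are independent of, and check the invariant) is exactly the paper's recursive dispatching procedure, merely organized letter-by-letter instead of clique-by-clique. The only cosmetic caution is that defining $\gamma_i=d_i-c_i$ up front presupposes $c_i\leq d_i$, which is part of what is being proved; your inductive construction supplies this, so no gap results.
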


\begin{proof}
  If the Cartier-Foata normal forms of $u$ and $v$ satisfy the
  properties stated in points~\ref{item:10}--\ref{item:11}, then an easy
  induction argument shows that:
\begin{gather*}
v=  c_1\cdot\ldots\cdot c_n\cdot(\gamma_1\cdot\ldots\cdot \gamma_n)
    \cdot d_{n+1}\cdot\ldots\cdot d_p \,,
\end{gather*}
which implies that $u\leq v$. 

Conversely, the proof is simple using the heap of pieces
intuition. Here is the main argument. Assume that $u\leq v$, and let
$w$ be such that $v=u\cdot w$. If $w=0$, the result is
trivial. Otherwise, let $\delta_1\to\ldots\to\delta_r$ be the
Cartier-Foata normal form of~$w$. Apply the following recursive
construction: pick a letter $\alpha\in\delta_1$, and move $\alpha$
from $\delta_1$ to the clique $c_i$ where $i$ is the smallest index such
that $\alpha\indep c_i,\dots , c_n$. If there is no such index~$i$,
the letter $\alpha$ stays in~$\delta_1$\,. Then repeat the operation,
until all letters of $\delta_1$ have been dispatched. Once this is
done, recursively apply the same procedure to~$\delta_2$ up to
$\delta_r$\,. Some cliques among the $\delta_i$ might entirely vanish
during the procedure. The whole procedure yields the Cartier-Foata
normal form of $u\cdot w$\,, under the requested form.
\end{proof}

For each integer $p\geq0$, we define the $p$-cut operation as the
mapping $\cut_p:\M\to\M$, $u \mapsto\cut_p(u)=c_1\cdot\ldots\cdot
c_p$\,, where $(c_k)_{k\geq1}$ is the \ECF\ decomposition of~$u$.

\begin{corollary}
  \label{cor:1}
Let $\M$ be a trace monoid, and let $u,v\in\M$ be two traces. Then
$u\leq v$ if and only if $u\leq\cut_{\height(u)}(v)$\,.
\end{corollary}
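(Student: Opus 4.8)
The statement is a direct consequence of Lemma~\ref{prop:1}, so the plan is essentially careful bookkeeping on Cartier-Foata normal forms. First I would dispose of the degenerate case $u=0$: then $\height(u)=0$, $\cut_0(v)=0$, and both $u\leq v$ and $u\leq\cut_0(v)$ hold trivially. So I may assume $u\neq0$ henceforth and set $n=\height(u)$.

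For the ``if'' direction I would observe that $\cut_n(v)\leq v$ holds unconditionally. Writing $(c_k)_{k\geq1}$ for the \ECF\ of $v$ and $p=\height(v)$, either $n\geq p$, in which case $\cut_n(v)=c_1\cdot\ldots\cdot c_p=v$, or $n<p$, in which case $v=\cut_n(v)\cdot(c_{n+1}\cdot\ldots\cdot c_p)$. In both cases $\cut_n(v)\leq v$, and transitivity of $\leq$ then yields the implication $u\leq\cut_n(v)\implies u\leq v$.

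The substance is the ``only if'' direction. Assuming $u\leq v$, I would let $c_1\to\ldots\to c_n$ and $d_1\to\ldots\to d_p$ be the Cartier-Foata normal forms of $u$ and $v$. Lemma~\ref{prop:1} supplies $n\leq p$ together with cliques $\gamma_1,\ldots,\gamma_n$ satisfying $\gamma_i\indep c_i,\ldots,c_n$ and $d_i=c_i\cdot\gamma_i$ for each $i\in\{1,\ldots,n\}$. The key point to verify is that $d_1\to\ldots\to d_n$ is exactly the Cartier-Foata normal form of $\cut_n(v)$: since $n\leq p$, the cliques $d_1,\ldots,d_n$ are non-empty, and the admissibility relations $d_i\to d_{i+1}$ for $1\leq i\leq n-1$ are inherited from the normal form of $v$. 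Hence $\cut_n(v)$ has height $n$ and normal form $d_1\to\ldots\to d_n$.

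With this identification secured, I would apply Lemma~\ref{prop:1} a second time, now to the pair $(u,\cut_n(v))$, whose normal forms are $c_1\to\ldots\to c_n$ and $d_1\to\ldots\to d_n$. The very same family $\gamma_1,\ldots,\gamma_n$ witnesses conditions~\ref{item:10}--\ref{item:11} of that lemma: the independence $\gamma_i\indep c_i,\ldots,c_n$ and the equalities $d_i=c_i\cdot\gamma_i$ are unchanged, and the height inequality is now the trivial $n\leq n$. Therefore $u\leq\cut_n(v)$, completing this direction. The only genuine obstacle is the bookkeeping identification of the normal form of $\cut_n(v)$; once that is settled, the two applications of Lemma~\ref{prop:1} share the same witnessing cliques and the conclusion is immediate.
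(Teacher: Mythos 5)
Your proof is correct and follows essentially the same route as the paper: both extract the cliques $\gamma_1,\ldots,\gamma_n$ from Lemma~\ref{prop:1}, identify the Cartier-Foata normal form of $\cut_{\height(u)}(v)$ as $d_1\to\ldots\to d_n$, and conclude. The only cosmetic difference is that where the paper writes out the commutation $\cut_n(v)=(c_1\cdot\gamma_1)\cdot\ldots\cdot(c_n\cdot\gamma_n)=u\cdot\gamma_1\cdot\ldots\cdot\gamma_n\geq u$ explicitly, you re-invoke the ``if'' direction of Lemma~\ref{prop:1} with the same witnesses, which amounts to the same computation.
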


\begin{proof}
  If $u\leq\cut_{\height(v)}(v)$, then $u\leq v$ since $\cut_n(v)\leq v$
  holds trivially for any integer $n\geq0$. Conversely, assume that $u\leq
  v$. Then the Cartier-Foata normal forms $c_1\to\ldots\to c_n$ and
  $d_1\to\ldots \to d_p$ of $u$ and $v$ satisfy properties 1 and 2 in
  Lemma~\ref{prop:1}. We have 
  $\height(u)=n$ and 
  \begin{equation*}
    \cut_{\height(u)}(v)=(c_1\gamma_1)\cdot\ldots\cdot(c_n\gamma_n)=(c_1\cdot\ldots\cdot c_n)
\cdot\gamma_1\cdot\ldots\cdot \gamma_n\geq u\,.
  \end{equation*}
The proof is complete.
\end{proof}

\begin{corollary}
  \label{cor:2}
  Let $\M$ be a trace monoid. Let $(c_k)_{k\geq1}$ and
  $(d_k)_{k\geq1}$ be the \ECF\ decompositions of two traces $u$
  and~$v$ of $\M$.  If $u\leq v$, then $c_k\leq d_k$ for all integers
  $k\geq1$.
\end{corollary}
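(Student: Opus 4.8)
The plan is to read the statement off directly from Lemma~\ref{prop:1}, once the degenerate case where $u$ is the empty trace has been cleared away. First I would dispose of $u=0$: by definition of the \ECF\ decomposition all its cliques are then $c_k=0$, and since $0\leq d_k$ holds trivially for every $k$, the conclusion is immediate. So I may henceforth assume $u\neq0$. Because $u\leq v$ means $v=u\cdot w$ for some $w\in\M$ and length is additive, we have $|v|=|u|+|w|\geq|u|\geq1$, hence $v\neq0$ as well. Thus both $u$ and $v$ carry genuine Cartier-Foata normal forms, say $c_1\to\ldots\to c_n$ and $d_1\to\ldots\to d_p$, and Lemma~\ref{prop:1} is applicable.

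Next I would invoke that lemma: from $u\leq v$ it yields $n\leq p$ together with cliques $\gamma_1,\ldots,\gamma_n$ such that $d_i=c_i\cdot\gamma_i$ holds for every $i\in\{1,\ldots,n\}$. By the very definition of the left divisibility relation $\leq$, the identity $d_i=c_i\cdot\gamma_i$ gives $c_i\leq d_i$ for each $i$ in the range $1\leq i\leq n$. This settles all indices up to $n=\height(u)$.

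It then only remains to treat the indices $k>n$. Here the \ECF\ decomposition of $u$ has $c_k=0$ by construction, since $\height(u)=n$ is precisely the number of non-empty cliques in the \ECF\ decomposition of~$u$; and $0\leq d_k$ is trivially true. Combining the two ranges gives $c_k\leq d_k$ for all $k\geq1$, as required.

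I do not expect any genuine obstacle here: the substantive work has already been absorbed into Lemma~\ref{prop:1}, and the corollary is essentially a restatement of its conclusion in terms of extended decompositions. The only points meriting a word of care are the reduction to the case where both $u$ and $v$ are non-empty (so that the lemma applies at all), and the bookkeeping at indices beyond $\height(u)$, where the padding by the empty clique makes the desired inequality automatic.
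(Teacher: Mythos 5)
Your proof is correct and follows essentially the same route as the paper: both reduce the claim to Lemma~\ref{prop:1} point~\ref{item:11}, which gives $d_k=c_k\cdot\gamma_k$ and hence $c_k\leq d_k$ whenever $c_k$ is a genuine clique of the normal form, and both dispose of the remaining indices by noting that $c_k=0\leq d_k$ is trivial. Your extra care with the degenerate case $u=0$ and with the range $k>\height(u)$ is sound but adds nothing beyond the paper's one-line case split.
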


\begin{proof}
  The inequality $c_k\leq d_k$ is trivial if $c_k=0$\,. And for
  $c_k\neq 0$, then $c_k$ belongs to the Cartier-Foata normal form
  of~$u$, and $c_k\leq d_k$ follows from Lemma~\ref{prop:1}
  point~\ref{item:11}.
\end{proof}

We lift the \ECF\ decomposition to generalized traces as follows.

\begin{lemma}
  \label{prop:2}
  Let $\M=\M(\Sigma,I)$ be a trace monoid. Then, for every generalized
  trace $u\in\W(\Sigma,I)$, there exists a unique infinite sequence of
  cliques $(c_k)_{k\geq1}$ such that
  $u=\bigvee_{k\geq1}(c_1\cdot\ldots\cdot c_k)$ and $c_k\to c_{k+1}$
  holds for all integers $k\geq0$\,.
\end{lemma}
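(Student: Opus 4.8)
The plan is to extend the \ECF\ decomposition from traces (Lemma~\ref{prop:1} and Corollary~\ref{cor:2}) to generalized traces by a limiting argument, exploiting the fact that every generalized trace is the \lub\ of a non-decreasing sequence of traces, together with the compactness property of $(\W,\leq)$ recalled in~\S~\ref{sec:trace-monoids-their}. First I would fix a generalized trace $u\in\W$ and choose a non-decreasing sequence $\seq uk$ in $\M$ with $u=\bigvee_{k\geq0}u_k$\,. For each integer $j\geq1$, I would look at the $j$-th clique $C_j(u_k)$ in the \ECF\ decomposition of~$u_k$\,, as $k$ varies. By Corollary~\ref{cor:2}, since $u_k\leq u_{k+1}$\,, the sequence $\bigl(C_j(u_k)\bigr)_{k\geq0}$ is non-decreasing in the clique order~$\leq$ for each fixed~$j$. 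As $\C$ is finite, this non-decreasing sequence of cliques is eventually constant; call its limiting value~$c_j$\,. This defines the candidate sequence $(c_j)_{j\geq1}$\,.

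The next step is to verify the two required properties of $(c_j)_{j\geq1}$\,: the Cartier-Foata admissibility $c_j\to c_{j+1}$\,, and the identity $u=\bigvee_{k\geq1}(c_1\cdot\ldots\cdot c_k)$\,. For admissibility, I would argue that for each pair $(j,j+1)$ there is a common index $k$ beyond which both $C_j(u_k)=c_j$ and $C_{j+1}(u_k)=c_{j+1}$ hold; since $C_j(u_k)\to C_{j+1}(u_k)$ holds for every trace $u_k$ whenever $c_{j+1}\neq0$\,, admissibility is inherited in the limit. One must handle the case where some $c_j=0$\,, but the convention $c_k\to c_{k+1}$ in the \ECF\ setting is designed exactly so that trailing zeros cause no trouble; here, for a genuine infinite trace all $c_j$ will be non-empty, while the finite case reduces to the ordinary \ECF. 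For the \lub\ identity, I would show $\bigvee_k(c_1\cdot\ldots\cdot c_k)$ and $u$ dominate each other. On the one hand, each $c_1\cdot\ldots\cdot c_k=\cut_k(u_m)$ for $m$ large enough, so $c_1\cdot\ldots\cdot c_k\leq u_m\leq u$\,, giving one inequality. On the other hand, each $u_m$ satisfies $u_m=\cut_{\height(u_m)}(u_m)=c_1\cdot\ldots\cdot c_{\height(u_m)}$ for $m$ large by stabilization, hence $u_m\leq\bigvee_k(c_1\cdot\ldots\cdot c_k)$\,, and taking the \lub\ over $m$ gives the reverse inequality.

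For uniqueness, suppose $(c_k)_{k\geq1}$ and $(c'_k)_{k\geq1}$ both satisfy the conclusion. Setting $v_k=c_1\cdot\ldots\cdot c_k$ and $v'_k=c'_1\cdot\ldots\cdot c'_k$\,, both $\seq vk$ and $\seq{v'}k$ are non-decreasing with the same \lub~$u$. The compactness property then forces a mutual domination: for each $k$ there is $k'$ with $v_k\leq v'_{k'}$ and vice versa. Applying Corollary~\ref{cor:1} (the $p$-cut characterization of $\leq$) and Corollary~\ref{cor:2}, I would deduce $c_j=c'_j$ for each $j$ by induction on~$j$, comparing the $j$-th cliques of sufficiently long prefixes. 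The main obstacle I anticipate is the bookkeeping around stabilization indices: I must ensure the index $k$ past which $C_j(u_k)$ has stabilized can be chosen uniformly enough to simultaneously control finitely many coordinates and to translate between the prefix relation on $\M$ and the coordinatewise clique relation. The key technical lever throughout is that $\C$ is finite, so each coordinate stabilizes, and that the compactness property lets me reduce every \lub\ comparison in~$\W$ to a comparison of honest traces in~$\M$ where Lemma~\ref{prop:1} and its corollaries apply directly.
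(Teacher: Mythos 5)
Your proposal follows essentially the same route as the paper: approximate $u$ by a non-decreasing sequence of traces, use Corollary~\ref{cor:2} plus the finiteness of $\C$ to stabilize each coordinate of the \ECF\ decompositions, and invoke the compactness property for the remaining verifications, which the paper itself dismisses as routine. One small repair: the claim that $u_m=c_1\cdot\ldots\cdot c_{\height(u_m)}$ for $m$ large is not correct as stated (the height grows with $m$, so stabilization never catches up); what you actually need, and can get from Corollary~\ref{cor:1} applied to $u_m\leq u_{m'}$ with $m'$ chosen so that the first $\height(u_m)$ cliques of $u_{m'}$ have stabilized, is the inequality $u_m\leq c_1\cdot\ldots\cdot c_{\height(u_m)}$, which suffices for the reverse domination.
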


\begin{proof}
  The result is clear if $u\notin\B\M$.  And if $u\in\B\M$, we
  consider a non-decreasing sequence $\seq un$ in $\M$ such that
  $u=\bigvee_{n\geq0}u_n$\,. Then, if $(c_{n,k})_{k\geq1}$ is the
  \ECF\ decomposition of~$u_n$\,, it follows from
  Corollary~\ref{cor:2} that $(c_{n,k})_{n\geq0}$ is non-decreasing
  in~$\C$ for each integer $k\geq1$, and thus eventually constant, say
  equal to~$c_k$\,. Routine verifications using the compactness
  property stated in~\S~\ref{sec:trace-monoids-their} show that the
  sequence $(c_k)_{k\geq1}$ thus defined is the only adequate
  sequence.
\end{proof}

Say that the sequence $(c_k)_{k\geq1}$ associated to a generalized
trace $u\in\W(\Sigma,I)$ as in Lemma~\ref{prop:2} is the
\emph{\ECF\ decomposition} of~$u$.  For each $\xi\in\B\M$, the
sequence $(c_k)_{k\geq1}$ is the unique sequence of non-empty cliques
announced in~\eqref{eq:11}.

Recall that we have defined $\Omega$ as the set of infinite paths in
the graph $(\Cstar,\to)$ of non-empty cliques.  Mapping each point
$\xi\in\B\M$ to its \ECF\ decomposition defines a well-defined
application $\Psi:\B\M\to\Omega$, which is the mapping announced
in~\S~\ref{sec:cartier-foata-sub}. It is bijective; its inverse is
given, if $\omega\in\Omega$ has the form
$\omega=(c_k)_{k\geq1}$\,, by:
\begin{equation*}
  \Psi^{-1}(\omega)=\bigvee_{k\geq1}(c_1\cdot\ldots\cdot c_k)\,.
\end{equation*}

The following result, which is based on Lemma \ref{prop:1}, explores
how elementary cylinders transpose through the \ECF\ decomposition,
connecting the two different points of view on the boundary elements:
the \emph{intrinsic} point of view through elementary cylinders, and
the \emph{effective} point of view through sequences of cliques.

\begin{proposition}
  \label{prop:3}
  Let $\M$ be a trace monoid. For any element $\xi\in\B\M$, denote
  by $\bigl(C_k(\xi)\bigr)_{k\geq1}$ the \ECF\ decomposition
  of~$\xi$. Let $n\geq1$ be an integer, and let $c_1,\ldots,c_n\in\C$ be
  $n$ cliques such that $c_1\to\ldots\to c_n$ holds. Put
  $v=c_1\cdot\ldots\cdot c_{n-1}$ and $u=v\cdot c_n$\,.  Then the
  following equalities of subsets of\/ $\B\M$ hold:
\begin{gather}
  \label{eq:8}
\up u = \up(c_1\cdot\ldots\cdot c_n)=\bigl\{\xi\in\B\M\tq C_1(\xi)\cdot\ldots\cdot
C_n(\xi)\geq u\bigr\}\,,\\
\label{eq:9}
\bigl\{\xi\in\B\M\tq C_1(\xi)=c_1\,,\ldots,\, C_n(\xi)=c_n\bigr\}=
\up u\setminus\Bigl(
\bigcup_{\substack{c\in\C\,:\\c_n<c}}\up(v\cdot c)\Bigr)\,,
\end{gather}
where $c_n<c$ means $c_n\leq c$ and $c_n\neq c$.
\end{proposition}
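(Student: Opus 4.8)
The plan is to prove the two set-equalities separately, relying throughout on Lemma~\ref{prop:1} and its corollaries, together with the compactness property recalled in~\S~\ref{sec:trace-monoids-their}. First I would establish~\eqref{eq:8}. The equality $\up u = \up(c_1\cdot\ldots\cdot c_n)$ is immediate, since $u=v\cdot c_n = c_1\cdot\ldots\cdot c_n$ by definition of $v$ and $u$. For the second equality in~\eqref{eq:8}, the containment $\supseteq$ is direct: if $C_1(\xi)\cdot\ldots\cdot C_n(\xi)\geq u$, then since $C_1(\xi)\cdot\ldots\cdot C_n(\xi)\leq\xi$ we get $u\leq\xi$, that is, $\xi\in\up u$. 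For the reverse containment $\subseteq$, I would invoke Corollary~\ref{cor:1}: if $\xi\in\up u$, i.e.\ $u\leq\xi$, then $u\leq\cut_{\height(u)}(\xi)$. Since $\height(u)\leq n$ (the Cartier-Foata normal form of $u$ has at most $n$ cliques, as $c_1\to\ldots\to c_n$ but some $c_i$ could coincide with parts already present), we have $\cut_{\height(u)}(\xi)\leq C_1(\xi)\cdot\ldots\cdot C_n(\xi)$, whence $u\leq C_1(\xi)\cdot\ldots\cdot C_n(\xi)$. Here I must be slightly careful: $c_1\to\ldots\to c_n$ ensures this is the genuine Cartier-Foata form of $u$ precisely when all $c_i$ are non-empty, so $\height(u)=n$ and $\cut_n(\xi)=C_1(\xi)\cdot\ldots\cdot C_n(\xi)$ directly.

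Next I would turn to~\eqref{eq:9}, which is where the real work lies. The left-hand set consists of those $\xi$ whose first $n$ extended Cartier-Foata cliques are \emph{exactly} $c_1,\ldots,c_n$; the right-hand side describes the same set intrinsically, by removing from $\up u$ all cylinders $\up(v\cdot c)$ for cliques $c$ strictly above $c_n$. The strategy is a double inclusion. For $\subseteq$: suppose $C_k(\xi)=c_k$ for $k=1,\ldots,n$. Then by~\eqref{eq:8} applied to $u$, we have $\xi\in\up u$. I must check $\xi\notin\up(v\cdot c)$ for every clique $c>c_n$. If on the contrary $v\cdot c\leq\xi$, then applying~\eqref{eq:8} to the trace $v\cdot c$ (whose Cartier-Foata form is $c_1\to\ldots\to c_{n-1}\to c$, using $c_{n-1}\to c$ which follows from $c_{n-1}\to c_n$ and $c_n\leq c$) would force $C_1(\xi)\cdot\ldots\cdot C_n(\xi)\geq v\cdot c$. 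By Corollary~\ref{cor:2}, comparing the $n$-th cliques would give $c_n = C_n(\xi)\geq c$, contradicting $c_n<c$.

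For the reverse inclusion $\supseteq$ in~\eqref{eq:9}: take $\xi\in\up u$ with $\xi\notin\up(v\cdot c)$ for all $c>c_n$. From $\xi\in\up u$ and~\eqref{eq:8}, together with Corollary~\ref{cor:2} applied to $u\leq\xi$, I obtain $c_k\leq C_k(\xi)$ for $k=1,\ldots,n$. The task is to upgrade these inequalities to equalities. I expect the main obstacle to be showing $c_n=C_n(\xi)$ (the equalities $c_k=C_k(\xi)$ for $k<n$ will follow from a similar but easier argument, or by downward propagation). The key idea is that if $c_n < C_n(\xi)$ strictly, I would set $c=C_n(\xi)$ and show $v\cdot c\leq\xi$, contradicting the hypothesis. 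To justify $v\cdot c\leq\xi$, I would use Lemma~\ref{prop:1}: the Cartier-Foata data of $\xi$ (suitably truncated via $\cut$) places $C_n(\xi)$ as the $n$-th clique sitting above $c_n$, and since $v=c_1\cdot\ldots\cdot c_{n-1}$ matches the first $n-1$ cliques, the trace $v\cdot C_n(\xi) = c_1\cdot\ldots\cdot c_{n-1}\cdot C_n(\xi)$ is exactly $\cut_n(\xi)$ once I verify $c_k=C_k(\xi)$ for $k<n$. Thus the argument is genuinely intertwined: I would run it as a finite downward induction on the index, establishing $c_k=C_k(\xi)$ for $k=n,n-1,\ldots,1$ simultaneously with the non-existence of a dominating cylinder at each level, taking care that the Cartier-Foata admissibility $c_1\to\ldots\to c_{n-1}\to c$ is preserved when we replace $c_n$ by a larger compatible clique $c$. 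Once all equalities are secured, $\xi$ lies in the left-hand set, completing the proof.
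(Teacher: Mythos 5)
Your treatment of \eqref{eq:8} is correct and is essentially the paper's argument (compactness combined with Corollary~\ref{cor:1}). The problems are in \eqref{eq:9}. For the inclusion $\subseteq$, your claim that the Cartier--Foata normal form of $v\cdot c$ is $c_1\to\ldots\to c_{n-1}\to c$, on the grounds that $c_{n-1}\to c_n$ and $c_n\leq c$ imply $c_{n-1}\to c$, is false: a letter of $c\setminus c_n$ need not depend on any letter of $c_{n-1}$. In $\M_1=\langle a,b,c\mid a\cdot b=b\cdot a\rangle$ with $n=2$, $c_1=c_2=\{a\}$ and $c=\{a,b\}$, the relation $c_1\to c$ fails and the normal form of $v\cdot c=a\cdot a\cdot b$ is $(a\cdot b)\to a$. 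This also invalidates the next step: Corollary~\ref{cor:2} compares the Cartier--Foata cliques of $v\cdot c$ with those of $\xi$, and the $n$-th clique of $v\cdot c$ is not $c$ in general (here it is $\{a\}$), so "comparing the $n$-th cliques" does not yield $c\leq c_n$. The correct route, which is the paper's, is one step away from what you wrote: $\height(v\cdot c)=n$ (it is a product of $n$ cliques and dominates $v\cdot c_n$), so $v\cdot c\leq\xi$ forces $v\cdot c\leq\cut_n(\xi)=v\cdot c_n$, which is impossible already for length reasons, or by cancelling $v$ to get $c\leq c_n$.

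The inclusion $\supseteq$ is where the real content lies, and your plan does not yet contain the needed mechanism. After Corollary~\ref{cor:2} gives $c_k\leq C_k(\xi)$, Lemma~\ref{prop:1} lets you write $C_k(\xi)=c_k\cdot\gamma_k$ with $\gamma_k\indep c_k,\ldots,c_n$, whence $\cut_n(\xi)=u\cdot\gamma_1\cdot\ldots\cdot\gamma_n$. Your downward induction stalls at its first step $k=n$: to produce a cylinder $\up(v\cdot c)$ with $c>c_n$ containing $\xi$, you need a letter $\alpha$ with $\alpha\indep c_n$ and $u\cdot\alpha\leq\xi$, and a letter of $\gamma_n$ need not be minimal in the heap $\gamma_1\cdot\ldots\cdot\gamma_n$ (letters of $\gamma_1,\ldots,\gamma_{n-1}$ may lie below it), so $u\cdot\alpha\leq u\cdot\gamma_1\cdot\ldots\cdot\gamma_n$ can fail for your chosen $\alpha$. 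The argument is not an induction on the index at all: if any $\gamma_j$ is non-empty, then $\gamma_1\cdot\ldots\cdot\gamma_n$ is non-empty, any minimal letter $\alpha$ of it lies in some $\gamma_j$ and hence satisfies $\alpha\indep c_j,\ldots,c_n$; thus $c_n\cdot\alpha$ is a clique strictly above $c_n$ and $v\cdot(c_n\cdot\alpha)=u\cdot\alpha\leq u\cdot\gamma_1\cdot\ldots\cdot\gamma_n\leq\xi$, contradicting $\xi\notin\bigcup_{c>c_n}\up(v\cdot c)$. All the $\gamma_j$ therefore vanish simultaneously. This is exactly the step your sketch labels "genuinely intertwined" without supplying it.
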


\begin{proof}
   Put $\W=\W(\Sigma,I)$, and extend the cut
operations $\cut_p:\M\to\M$ defined above for all integers $p\geq0$,
to mappings $\cut_p:\W\to\M$ in the obvious way. Combining the
compactness property with Corollary~\ref{cor:1} yields:
\begin{equation}
\label{eq:6}
  \forall u\in\M\quad\forall v\in\W\quad
u\leq v\iff u\leq\cut_{\height(u)}(v)\,.
\end{equation}

Applied to $u=c_1\cdot\ldots\cdot c_n$ as in the statement and to
$\xi\in\B\M$ in place of~$v$, this is~(\ref{eq:8}).

We now prove~(\ref{eq:9}). Set:
\begin{align*}
  A&=\bigl\{\xi\in\B\M\tq C_1(\xi)=c_1\wedge\ldots\wedge
  C_n(\xi)=c_n\bigr\}\,,&B&=\bigcup_{c\in\C\tq c_n<c}\up(v\cdot c)\,.
\end{align*}

It is obvious that $A\subseteq \up u$. We prove that $A\cap
B=\emptyset$. For this, by contradiction, assume there exists $\xi\in
A\cap B$, and let $(\delta_k)_{k\geq1}$ be the \ECF\ of~$\xi$. Then
$\delta_i=c_i$ for all $i\in\{1,\ldots,n\}$ since $\xi\in A$. Let
$c\in\C$ be a clique such that $c_n<c$ and $\xi\in\up(v\cdot
c)$. Clearly, $\tau(v\cdot c)=n$. Applying~(\ref{eq:6})
 to $v\cdot c\leq\xi$ we get thus $c_1\cdot\ldots\cdot c_{n-1}\cdot c\leq
 c_1\cdot\ldots\cdot c_{n-1}\cdot c_n$\,, and then by left
 cancellativity of the monoid, $c\leq c_n$, a contradiction. This
 proves that $A\cap B=\emptyset$, and thus the $\subseteq$ inclusion
 of~(\ref{eq:9}). 

For the converse $\supseteq$ inclusion, let $\xi\in\up u\setminus B$,
keeping the notation  $(\delta_k)_{k\geq1}$ for its \ECF\
decomposition. Since $\height(u)=n$, it follows from~(\ref{eq:6}) that
$c_1\cdot\ldots\cdot c_n\leq\delta_1\cdot\ldots\cdot\delta_n$\,. Hence
$\delta_i=c_i\cdot\gamma_i$ for some cliques
$\gamma_1,\ldots,\gamma_n$ as in Lemma~\ref{prop:1}. Using the
properties of the cliques~$\gamma_i$'s, we have
$\xi\geq\delta_1\cdot\ldots\cdot\delta_n=c_1\cdot\ldots\cdot
c_n\cdot\gamma_1\cdot\ldots\cdot\gamma_n$\,. Since $\xi\notin B$ by
assumption, this implies that $\gamma_i=0$ for all
$i\in\{1,\ldots,n\}$, and thus $\xi\in A$.
\end{proof}

\begin{corollary}
  The bijection $\Psi:\B\M\to\Omega$ which associates to each point
  $\xi\in\B\M$ its \ECF\ decomposition, is bi-measurable
  with respect to $(\B\M,\FFF)$ and $(\Omega,\GGG)$.
\end{corollary}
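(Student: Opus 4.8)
The plan is to establish measurability of $\Psi$ and of its inverse separately, testing each on a generating family of the relevant $\sigma$-algebra; since $\Psi$ is a bijection, bi-measurability then follows at once. The structural fact I shall lean on throughout is that the alphabet $\Sigma$ is finite, so the set of cliques $\C$ — and a fortiori $\Cstar$ — is finite. This is what keeps every union below finite, and hence keeps all the sets produced inside the algebra generated by elementary cylinders rather than forcing countable operations.

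First I would record the two generating families. The $\sigma$-algebra $\FFF$ is generated by the elementary cylinders $\up u$, $u\in\M$, while $\GGG$ is generated by the cylinders $\{\omega\in\Omega\tq C_1(\omega)=c_1,\ldots,C_n(\omega)=c_n\}$ ranging over admissible sequences $c_1\to\ldots\to c_n$ of non-empty cliques: indeed a single-coordinate constraint $\{C_k=c\}$ is a finite union of such cylinders over admissible prefixes, because $\Cstar$ is finite, so the two families generate the same $\sigma$-algebra. It therefore suffices to show that $\Psi$ pulls the $\GGG$-generators back into $\FFF$, and that $\Psi$ pushes the $\FFF$-generators forward into $\GGG$; the latter is the same as saying $\Psi^{-1}$ pulls the $\FFF$-generators back into $\GGG$.

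For the measurability of $\Psi$, I fix an admissible sequence $c_1\to\ldots\to c_n$, put $v=c_1\cdot\ldots\cdot c_{n-1}$ and $u=v\cdot c_n$. The preimage under $\Psi$ of the associated $\Omega$-cylinder is exactly $\{\xi\in\B\M\tq C_1(\xi)=c_1,\ldots,C_n(\xi)=c_n\}$, which by \eqref{eq:9} equals $\up u\setminus\bigcup_{c\in\C\tq c_n<c}\up(v\cdot c)$. As $\C$ is finite this is a finite Boolean combination of elementary cylinders, hence lies in $\FFF$. For the measurability of $\Psi^{-1}$, I fix $u\in\M$ with Cartier-Foata decomposition $c_1\to\ldots\to c_n$, so $n=\height(u)$ and $u=c_1\cdot\ldots\cdot c_n$. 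By \eqref{eq:8}, $\up u=\{\xi\tq C_1(\xi)\cdot\ldots\cdot C_n(\xi)\geq u\}$, a condition bearing only on the first $n$ coordinates; consequently $\Psi(\up u)$ is the union, over the finitely many admissible tuples $(d_1,\ldots,d_n)\in(\Cstar)^n$ with $d_1\cdot\ldots\cdot d_n\geq u$, of the cylinders $\{C_1=d_1,\ldots,C_n=d_n\}$, with all later coordinates left free. This is a finite union of $\GGG$-generators, hence lies in $\GGG$; and since $\Psi$ is a bijection, $\Psi(\up u)$ is precisely the preimage of $\up u$ under $\Psi^{-1}$.

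Both checks being complete on generating families, $\Psi$ and $\Psi^{-1}$ are measurable, so $\Psi$ is bi-measurable. I do not anticipate a genuine obstacle: Proposition~\ref{prop:3} already carries all the combinatorial weight, and the only points requiring care are the routine reduction to generating families and the repeated appeal to the finiteness of $\C$ that guarantees the unions coming from \eqref{eq:8} and \eqref{eq:9} are finite. The one subtlety worth spelling out is why the admissibility constraint on the tail coordinates imposes nothing further in the description of $\Psi(\up u)$ — namely that every $\omega\in\Omega$ automatically satisfies $C_k\to C_{k+1}$, so fixing the first $n$ coordinates to an admissible prefix leaves an honest cylinder and not a proper subset of one.
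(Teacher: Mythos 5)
Your proof is correct and follows the same route as the paper, whose own argument is simply the observation that measurability of $\Psi$ follows from~\eqref{eq:9} and measurability of $\Psi^{-1}$ from~\eqref{eq:8}; you have merely spelled out the routine reduction to generating families and the finiteness of $\C$ that the paper leaves implicit. No gap.
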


\begin{proof}
  The fact that $\Psi$ is measurable follows from~(\ref{eq:9}). The
  fact that $\Psi^{-1}$ is measurable follows from~(\ref{eq:8}).
\end{proof}

\subsection{Generating series and asymptotics}
\label{sec:prel-comb-results}

For   $\M$ a trace
monoid and $k\geq0$ an integer, 
let $\lambda_\M(k)$ be the number of traces of length $k$ :
\begin{equation*}
  \lambda_\M(k)=\#\{u\in\M\tq |u|=k\}\,.
\end{equation*}
Let $G_\M(X)$ be the generating series
  of~$\M$, defined by:
\begin{equation*}
  G_\M(X)=\sum_{u\in\M}X^{|u|}=\sum_{k\geq0}\lambda_\M(k)X^k\,.
\end{equation*}

The following result is standard, and is the basis of the
combinatorial study of trace monoids \cite{cartier69,viennot86,krob03,goldwurm00}.

\begin{theorem}
  \label{thr:5}
  Let $\M=\M(\Sigma,I)$ be a trace monoid, with $\mu_\M(X)$ the
  single-variable M\"obius polynomial. We assume that $|\Sigma|>1$.
  Then:
\begin{enumerate}
\item\label{item:5} The following formal identity holds in\/
  $\ZZ[[X]]$:
\[
G_\M(X)=1/\mu_\M(X) \:. 
\]
In particular, $G_\M(X)$~is a rational series.
\item\label{item:6} The polynomial $\mu_\M(X)$ has a unique root of
  smallest modulus, say~$p_0$\,. This root is real and lies in
  $(0,1)$.  The radius of convergence of the power series $G_\M(z)$ is
  thus equal to~$p_0$\,, and the series $G_\M(z)$ is divergent
  at~$p_0$\,.
\item\label{item:7} If $N$ is the multiplicity of the root $p_0$
  in~$\mu_\M(X)$\,, then the following estimate holds for some
  constant $C>0$:
  \begin{equation}
    \label{eq:17}
    \lambda_\M(k)\sim_{k\to\infty} Ck^{N-1}\bigl(1/p_0 \bigr)^k\,.
  \end{equation}
  Furthermore, the root $p_0$ has multiplicity $1$ if\/ $\M$ is
  irreducible.
\end{enumerate}
\end{theorem}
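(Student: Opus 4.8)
The plan is to treat the three items in order: item~\ref{item:5} is a purely combinatorial identity, item~\ref{item:6} rests on Pringsheim's theorem together with a Perron--Frobenius analysis of the acceptor graph, and item~\ref{item:7} follows from standard rational-function asymptotics.

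For item~\ref{item:5}, I would prove the formal identity $\mu_\M(X)\,G_\M(X)=1$ in $\ZZ[[X]]$ directly. Expanding the product, the coefficient of $X^n$ equals $\sum_{(c,w)}(-1)^{|c|}$, the sum ranging over pairs $(c,w)$ with $c\in\C$, $w\in\M$ and $|c|+|w|=n$. Grouping these pairs by the trace $t=c\cdot w$ (so $|t|=n$), the key observation is that for a fixed non-empty $t$ with minimal clique $D$ (the set of $\leq$-minimal letters of $t$, which are pairwise independent and so form a clique), the factorizations $t=c\cdot w$ with $c$ a clique are exactly those with $c\subseteq D$ and $w=t-c$. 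Hence the inner sum is $\sum_{c\subseteq D}(-1)^{|c|}=(1-1)^{|D|}=0$ whenever $D\neq\emptyset$, i.e. $t\neq0$, while it equals $1$ for $t=0$. This gives $\mu_\M(X)G_\M(X)=1$; since $\mu_\M(0)=1$, $\mu_\M$ is invertible in $\ZZ[[X]]$ and $G_\M=1/\mu_\M$ is rational.

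For the existence and location part of item~\ref{item:6}, note that $1\leq\lambda_\M(k)\leq|\Sigma|^k$, so the radius of convergence $R$ of $G_\M$ lies in $[1/|\Sigma|,1]$ and is finite and positive. As the coefficients $\lambda_\M(k)$ are non-negative, Pringsheim's theorem makes $X=R$ a singularity of $G_\M$; being rational, $G_\M$ has only poles, so $R$ is a real positive root of $\mu_\M$, and since $R$ is the radius of convergence no root has smaller modulus. Setting $p_0=R$, divergence at $p_0$ is then automatic. To obtain the strict bound $p_0<1$ I would exhibit a pair of dependent letters $\alpha,\beta$ (which exists as soon as the dependence graph has an edge, in particular when $\M$ is irreducible with $|\Sigma|>1$): the $2^k$ words of length $k$ over $\{\alpha,\beta\}$ are pairwise non-congruent, whence $\lambda_\M(k)\geq 2^k$ and $p_0\leq 1/2$.

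The hardest part is uniqueness of the smallest-modulus root, together with its simplicity in the irreducible case. Here the plan is to encode $G_\M$ through the acceptor graph $(\Cstar,\to)$: a trace of height $n$ is a path $c_1\to\cdots\to c_n$ of weight $X^{|c_1|+\cdots+|c_n|}$, so with the $\Cstar\times\Cstar$ matrix $M(X)$ given by $M(X)_{c,c'}=X^{|c'|}$ when $c\to c'$ and $0$ otherwise, and the vector $v(X)_c=X^{|c|}$, one has $G_\M(X)=1+v(X)^{\mathsf T}(I-M(X))^{-1}\mathbf 1$. For $X>0$ the matrix $M(X)$ is non-negative with support the adjacency of $(\Cstar,\to)$, and every singleton clique carries a self-loop $\{\alpha\}\to\{\alpha\}$; when $\M$ is irreducible the dependence graph is connected, which makes $(\Cstar,\to)$ strongly connected, so $M(X)$ is irreducible and, thanks to the self-loops, primitive. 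Its Perron eigenvalue $\beta(X)$ is then simple and strictly increasing, and $p_0$ is the unique positive solution of $\beta(p_0)=1$. For $|z|=p_0$ one has $|M(z)_{c,c'}|=M(p_0)_{c,c'}$ entrywise, so by Wielandt's subordination lemma any eigenvalue of $M(z)$ has modulus $\leq 1$, with modulus $1$ only if $M(z)=e^{i\phi}DM(p_0)D^{-1}$ for a unitary diagonal $D$; feeding a root $\mu_\M(z)=0$ (equivalently $1\in\mathrm{spec}\,M(z)$) into this and exploiting the self-loops and the strong connectivity of the singletons forces $z=p_0$. Simplicity of the Perron eigenvalue $1$ of $M(p_0)$ then translates into $p_0$ being a simple pole of $G_\M$, i.e. a simple root of $\mu_\M$, so $N=1$ for irreducible $\M$. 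I expect the bookkeeping in this Wielandt step --- pinning down $D$ and $\phi$ from the cycle structure --- to be the main technical obstacle.

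Finally, item~\ref{item:7} follows from the partial-fraction decomposition of $1/\mu_\M$. The dominant pole $p_0$, of order $N$, contributes a term whose $k$-th coefficient is $\sim C\,k^{N-1}p_0^{-k}$ with $C\neq0$ (the leading Laurent coefficient is non-zero by definition of the multiplicity $N$); by item~\ref{item:6} every other root has strictly larger modulus and so contributes an exponentially negligible amount, which yields \eqref{eq:17}. Positivity $C>0$ follows since $\lambda_\M(k)>0$ for all $k$ forces the real constant $C$ to be non-negative, while it is non-zero. The sharpening $N=1$ for irreducible $\M$ has already been obtained in the previous step.
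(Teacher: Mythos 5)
Your proposal is sound, and it is worth noting that the paper itself offers no proof of Theorem~\ref{thr:5}: it is stated as ``standard'' with a pointer to Cartier--Foata, Viennot, Krob et al.\ and Goldwurm--Santini. What you have written is essentially a self-contained reconstruction of those references. For item~\ref{item:5} your grouping of the pairs $(c,w)$ by $t=c\cdot w$ and the observation that the cliques dividing $t$ on the left are exactly the subsets of the first Cartier--Foata clique is the classical Cartier--Foata inversion argument, and it is correct. For items~\ref{item:6} and~\ref{item:7} your weighted transfer matrix $M(X)_{c,c'}=X^{|c'|}\1{c\to c'}$ on $(\Cstar,\to)$ plays the role that the \emph{expanded} unweighted automaton of Krob et al.\ plays in the paper's proof of Lemma~\ref{lem:5} (there each clique $c$ is blown up into $|c|$ states so that path length equals trace length); the two devices are interchangeable, and your Wielandt step does close: the equality case gives $e^{i\theta|c|}=e^{i\phi}$ for every $c\in\Cstar$ via the self-loops $c\to c$, and comparing a singleton with a clique of size $\ge 2$ forces $e^{i\theta}=1$ (if no clique of size $\ge2$ exists the monoid is free and $\mu_\M$ is linear). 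The positivity of the Perron vectors likewise gives the non-vanishing of the numerator at $p_0$, so the simple eigenvalue does translate into a simple pole of $G_\M$.

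Two caveats you should make explicit. First, your proof that $p_0<1$ genuinely needs a dependent pair of letters, and this is not implied by $|\Sigma|>1$ alone: for the free commutative monoid on two generators one has $\mu_\M(X)=(1-X)^2$ and $p_0=1$, so the statement ``$p_0\in(0,1)$'' as printed is false without some such hypothesis. Your argument correctly isolates the condition under which the claim holds (and it holds in the irreducible case, which is the only one the paper uses). Second, your Perron--Frobenius treatment of uniqueness and of $N=1$ uses strong connectivity of $(\Cstar,\to)$, i.e.\ irreducibility of $\M$. For a reducible monoid you should add one line: $\M$ factors as a direct product over the connected components of the dependence graph, $\mu_\M$ is the product of the component M\"obius polynomials, and any root of a component polynomial other than its own smallest root has modulus strictly larger than that smallest root, hence strictly larger than the global minimum; so the unique smallest-modulus root of $\mu_\M$ is the minimum of the component values $p_0^{(i)}$, attained only at the real point. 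With these two remarks added, the proof is complete.
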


\begin{lemma}
\label{lem:5}
  Let $\M$ be an irreducible trace monoid. Then for any
  non-empty clique $c\in\Cstar_\M$, we have\,:
\begin{equation}
  \label{eq:18}
  \lim_{k\to\infty} \lambda_{\M_c}(k) / \lambda_\M(k) =0\,.
\end{equation}
\end{lemma}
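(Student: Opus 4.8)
The plan is to compare the exponential growth rates of $\M_c$ and $\M$ and to show that the former is strictly the smaller. First I would reduce \eqref{eq:18} to a statement about roots of M\"obius polynomials using Theorem~\ref{thr:5}. Let $p_0$ be the smallest root of $\mu_\M(X)$; since $\M$ is irreducible, Theorem~\ref{thr:5}, points~\ref{item:6} and~\ref{item:7}, gives $\lambda_\M(k)\sim C(1/p_0)^k$ with a \emph{simple} root. If $|\Sigma_c|\leq 1$ then $\lambda_{\M_c}(k)$ is bounded (it is $\mathbf 1_{\{k=0\}}$ when $\M_c=\{0\}$, and $1$ when $\Sigma_c$ is a single letter), so \eqref{eq:18} is immediate. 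In the remaining case $|\Sigma_c|\geq 2$, let $q$ be the smallest root of $\mu_{\M_c}(X)$; Theorem~\ref{thr:5} gives $\lambda_{\M_c}(k)\sim C'k^{N'-1}(1/q)^k$, so that \eqref{eq:18} holds if and only if $q>p_0$.

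The easy half, $q\geq p_0$, I would obtain by coefficientwise domination. Fixing $b\in c$, one has $b\notin\Sigma_c$ (a letter is never parallel to itself, so it is not parallel to $c$), hence $\Sigma_c\subseteq\Sigma\setminus\{b\}$ and the inclusion of alphabets induces an injective, length\nobreakdash-preserving embedding $\M_c\hookrightarrow\M$. Thus $\lambda_{\M_c}(k)\leq\lambda_\M(k)$ for all $k$, and comparing the radii of convergence of $G_{\M_c}=1/\mu_{\M_c}$ and $G_\M=1/\mu_\M$ (Theorem~\ref{thr:5}, point~\ref{item:5}) yields $q\geq p_0$. Since $\mu_{\M_c}(0)=1$, $\mu_{\M_c}>0$ on $[0,q)$, and $\mu_{\M_c}(q)=0$, the desired strict inequality $q>p_0$ is \emph{equivalent} to $\mu_{\M_c}(p_0)>0$.

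The crux is this strict inequality, and I would establish it by a Perron--Frobenius argument on the length-weighted Cartier--Foata structure. For $z>0$ set $A_\M(z)=\bigl(z^{|c'|}\mathbf 1_{\{c\to c'\}}\bigr)_{c,c'\in\Cstar_\M}$, so that a trace with Cartier--Foata decomposition $c_1\to\cdots\to c_n$ carries total weight $z^{|u|}$; then $G_\M(z)<\infty$ exactly when the spectral radius satisfies $\rho\bigl(A_\M(z)\bigr)<1$, and $p_0$ is characterized by $\rho\bigl(A_\M(p_0)\bigr)=1$. The non-empty cliques of $\M_c$ are exactly those cliques of $\M$ contained in $\Sigma_c$, forming a \emph{proper} subset $\Cstar_{\M_c}\subsetneq\Cstar_\M$ (for instance $\{b\}\notin\Cstar_{\M_c}$), and $A_{\M_c}(z)$ is the corresponding principal submatrix. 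Because $\M$ is irreducible the graph $(\Cstar_\M,\to)$ is strongly connected, so $A_\M(z)$ is an irreducible non-negative matrix; by the strict monotonicity of the Perron value under passage to a proper principal submatrix, $\rho\bigl(A_{\M_c}(p_0)\bigr)<\rho\bigl(A_\M(p_0)\bigr)=1$. Since $z\mapsto\rho\bigl(A_{\M_c}(z)\bigr)$ is continuous and non-decreasing, it stays below $1$ on $[p_0,z_1]$ for some $z_1>p_0$, whence $G_{\M_c}(z_1)<\infty$ and $q\geq z_1>p_0$; equivalently $\mu_{\M_c}(p_0)>0$, completing the argument.

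The main obstacle is precisely this strictness, and within it the essential input is the strong connectivity of $(\Cstar_\M,\to)$ for irreducible $\M$ --- this is the only point at which connectedness of the dependence pair $(\Sigma,D)$ is genuinely used, and it is the same structural fact that underlies the irreducibility of the Cartier--Foata Markov chain asserted in Theorem~\ref{thr:2}. I would prove it directly: every clique reaches a singleton (for $\beta\in c$ one has $c\to\{\beta\}$), singletons communicate because $\{\alpha\}\to\{\beta\}$ precisely when $(\alpha,\beta)\in D$ and $D$ is connected, and the remaining reachability of arbitrary cliques from singletons is handled by a sweep through $D$. An alternative, more self-contained route to strictness is a direct combinatorial injection that inserts a dependence edge crossing the cut $(\Sigma_c,\Sigma\setminus\Sigma_c)$ into traces of $\M_c$ to manufacture exponentially more traces of $\M$; this avoids spectral theory but the bookkeeping needed to guarantee injectivity in the presence of partial commutation is delicate, which is why I favour the Perron--Frobenius formulation.
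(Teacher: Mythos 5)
Your proof is correct and follows essentially the same route as the paper: both arguments rest on the strong connectivity of the Cartier--Foata graph $(\Cstar_\M,\to)$ for irreducible $\M$ and a strict Perron--Frobenius comparison showing that removing the cliques meeting $\Sigma\setminus\Sigma_c$ strictly decreases the relevant spectral radius. The only difference is technical: you encode trace length multiplicatively via the weighted transfer matrix $A_\M(z)$ evaluated at $z=p_0$ and compare a proper principal submatrix, whereas the paper encodes length by expanding each clique-state into $|c|$ states so as to compare unweighted incidence matrices $A_c\leq A$, $A_c\neq A$; both are standard and equally valid realizations of the same idea.
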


\begin{proof}
  The argument is rather standard. We sketch it and illustrate it
  for~$\M_1$\,.  Start by considering the Cartier-Foata automaton of
  $\M$ and transform it by expanding each node corresponding to a
  clique $c$ of cardinality strictly larger than one, into $|c|$
  nodes. The first of the expanded nodes is initial and the last of
  the expanded nodes is final. The non-expanded nodes are both initial
  and final.  See \cite[p.148]{krob03} for the details of the
  construction and see Figure~\ref{fig:illustration} for an
  illustration.

  Let $\A$ be the resulting automaton. Let $\A_c$ be the automaton
  obtained from $\A$ by keeping the same nodes, the same initial and
  final nodes, but by keeping only the arcs entering into the nodes
  labeled by the letters of~$\Sigma_c$\,.  Admissible paths of length
  $k$ in $\A$ are in bijection with traces of length $k$ in~$\M$.  And
  admissible paths of length $k$ in $\A_c$ are in bijection with
  traces of length $k$ in~$\M_c$ (recall that a path in an automaton
  is \emph{admissible} if it starts with an initial state and ends up
  with a final state).  Denote by $A$ the incidence matrix of the
  automaton~$\A$, and by $A_c$ the one of~$\A_c$.  By construction, we
  have:
\begin{align}\label{eq:pf}
A_c&\leq A,& A_c&\neq A\,.
\end{align}
According to Lemma~\ref{lem:1} below, since $\M$ is irreducible, the
matrix $A$ is primitive. So we are in the domain of applicability of
\cite[Theorem~1.2~p.9 and Theorem~1.1 point~$(e)$~p.4]{seneta81}, the
strong version of Perron-Frobenius Theorem for non-negative
matrices. According to it, the strict inequality~\eqref{eq:pf} yields
that the spectral radius of $A_c$ is strictly smaller than the
spectral radius of~$A$. The limit~\eqref{eq:18} follows.
\end{proof}

In Figure~\ref{fig:illustration}, we illustrate the construction of
the proof for the trace monoid $\M_1=\langle a,b,c\;|\;a\cdot b=b\cdot
a\rangle$ by showing the automaton~$\A$, to be compared with the
original automaton depicted in
Figure~\ref{fig:carteirfoatacliques}. In $\A$, the initial nodes are
$\{a,b,c,(a\cdot b)_1\}$ and the final nodes are $\{a,b,c,(a\cdot
b)_2\}$. For the clique~$\{a\}$, for instance, the automaton
$\A_{\{a\}}$ has the same nodes as $\A$ but a single arc: the
self-loop around the node~$b$.
\begin{figure}[ht]
\centerline{\SelectTips{cm}{}
\xymatrix@C=3em@R=1.5em{
  &{(a\cdot b)_1}\ar@/^/[rr] &&{(a\cdot
    b)_2}\ar@/^/[dr]\ar@/^/[dlll]\ar@/_/[ddl]!U\ar@/^/[ll]\POS!R(.2)!U(.5)&
  {}\save[]+<4em,0em>*\txt<10em>{Two states resulting from the
    expansion of the original state $a\cdot b$}\ar@{-->}[l]\restore
  \save"1,2"."1,4"!R="P"*+[F--]\frm{}\restore
  \\
  {a}\ar@/^/[drr]\POS!U(.2)!L(.5)\ar@(ul,dl)[]!D(.2)!L(.5)&&&&
  {b}\ar@/_/[dll]\POS!D(.2)!R(.5)\ar@(dr,ur)[]!U(.2)!R(.5)
  \\
  &&{c}\ar@/^/[ull]\ar@/_/[urr]!D\ar@/_/[uul]\POS!L(.2)!D(.5)\ar@(dl,dr)[]!R(.2)!D(.5)
}}\par\bigskip\medskip
  \caption{\textsl{The expanded automaton $\A$. }}
\label{fig:illustration}
\end{figure}
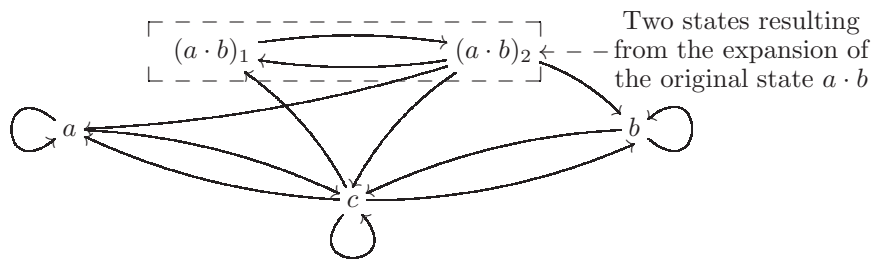

\begin{proposition}
  \label{cor:5}
  Let $\M=\M(\Sigma,I)$ be an irreducible trace monoid, and let $p_0$
  be the root of smallest modulus of the M\"obius
  polynomial~$\mu_\M(X)$\,. Then $\muc(p_0)>0$ for all non-empty
  cliques $c\in\Cstar_\M$\,.
\end{proposition}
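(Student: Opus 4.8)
The plan is to reduce the sign of $\muc(p_0)$ to a comparison of exponential growth rates, using the sharp counting asymptotics of Theorem~\ref{thr:5} together with Lemma~\ref{lem:5}, and then to read off positivity from the location of the smallest modulus root of $\muc$. First I would clear the degenerate cases. If $|\Sigma|=1$ the unique non-empty clique is maximal, so $\M_c=\{0\}$ and $\muc=1>0$; hence I may assume $|\Sigma|>1$, which by Theorem~\ref{thr:5} gives $p_0\in(0,1)$. Now fix a non-empty clique $c$. If $c$ is maximal then $\Sigma_c=\emptyset$ by~\eqref{eq:39}, so $\M_c=\{0\}$, $\muc(X)=1$, and $\muc(p_0)=1>0$. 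If $\Sigma_c$ is a single letter then $\muc(X)=1-X$ and $\muc(p_0)=1-p_0>0$ since $p_0\in(0,1)$. So the only substantial case is $|\Sigma_c|>1$.

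For the main case, I would apply Theorem~\ref{thr:5} to the sub-monoid $\M_c$, which is a trace monoid with $|\Sigma_c|>1$ (irreducibility of $\M_c$ is \emph{not} required here): the polynomial $\muc$ has a unique root $q_0\in(0,1)$ of smallest modulus, and by~\eqref{eq:17} one has $\lambda_{\M_c}(k)\sim_{k\to\infty}C_c\,k^{N_c-1}(1/q_0)^k$ for some $C_c>0$, where $N_c\geq1$ is the multiplicity of $q_0$. Applying the same theorem to the irreducible monoid $\M$ gives $\lambda_\M(k)\sim_{k\to\infty}C\,(1/p_0)^k$ with $C>0$, the multiplicity of $p_0$ being $1$. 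Feeding both asymptotics into Lemma~\ref{lem:5}, the ratio
\[
\frac{\lambda_{\M_c}(k)}{\lambda_\M(k)}\;\sim\;\frac{C_c}{C}\,k^{N_c-1}\Bigl(\frac{p_0}{q_0}\Bigr)^{k}
\]
must tend to $0$. Since $N_c\geq1$, the factor $k^{N_c-1}$ is bounded below by $1$, so convergence to $0$ forces $p_0/q_0<1$, that is $p_0<q_0$.

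It then remains to extract the sign of $\muc(p_0)$. Because $q_0$ is the smallest modulus root of $\muc$ and $p_0<q_0$, the polynomial $\muc$ has no root of modulus $\leq p_0$; in particular it has no real root in the interval $[0,p_0]$. Its constant term is $\muc(0)=1>0$ (the empty clique contributes the constant $1$ to the defining sum, cf.~\eqref{eq:10}). A polynomial that is positive at $0$ and has no zero on $[0,p_0]$ stays positive there by the intermediate value theorem, so $\muc(p_0)>0$, which is the desired conclusion.

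The one delicate point is the passage from the \emph{qualitative} vanishing $\lambda_{\M_c}(k)/\lambda_\M(k)\to0$ of Lemma~\ref{lem:5} to the \emph{strict} separation $p_0<q_0$. A mere limit of $0$ does not by itself place $p_0$ strictly inside the disk of convergence of $G_{\M_c}=1/\muc$, and it could in principle be compatible with $q_0=p_0$; it is precisely the polynomial prefactor $k^{N_c-1}$ in the sharp asymptotics~\eqref{eq:17}, together with the bound $N_c\geq1$, that rules out this borderline case. Everything else is routine: the handling of maximal and singleton cliques, and the elementary sign argument in the last step.
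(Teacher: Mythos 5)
Your proof is correct and follows essentially the same route as the paper: both deduce $p_0<q_0$ from the growth-rate comparison of Lemma~\ref{lem:5} combined with the sharp asymptotics of Theorem~\ref{thr:5}, the only (inessential) divergence being the final step, where the paper evaluates $\muc(p_0)=1/G_{\M_c}(p_0)>0$ via the convergent series $G_{\M_c}$ at $p_0$ inside its disc of convergence, while you invoke the intermediate value theorem on $[0,p_0]$. Both conclusions are equally valid, and your explicit treatment of the borderline cases ($c$ maximal, $|\Sigma_c|=1$) is a welcome precaution, since Theorem~\ref{thr:5} is stated under the hypothesis $|\Sigma|>1$.
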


\begin{proof}
  By point~\ref{item:7} of Theorem~\ref{thr:5}, and since $\M$ is
  assumed to be irreducible, we have the estimate $\lambda_\M(k)\sim
  C(1/p_0)^k$ for $k\to\infty$. Let $c\in\Cstar_\M$\,. If $c$ is
  maximal, then $\muc(X)=1$ and thus $\muc(p_0)>0$ holds
  trivially. Otherwise, the monoid $\M(\Sigma_c,I_c)$ is non-trivial,
  let $p_c$ be the root of smallest modulus of~$\muc$\,. Then
  $\lambda_{\M_c}(k)\sim C'\,k^{N-1}(1/p_c)^k$ for some constant $C'>0$ and
  where $N$ is the multiplicity of $p_c$ in~$\muc(X)$\,, by
  point~\ref{item:7} of Theorem~\ref{thr:5}.  In
  view of~Lemma \ref{lem:5}, it follows that $p_c>p_0$\,.

  In particular, and by point~\ref{item:6} of Theorem~\ref{thr:5},
  $p_0$~is in the open disc of convergence of the series
  $G_{\M_c}(z)=\sum_{k\geq0}\lambda_{\M_c}(k)z^k$\,. Applying
  point~\ref{item:5} of Theorem~\ref{thr:5} to the trace
  monoid~$\M_c$\,, and converting the formal equality into an equality
  between reals, we get $ \muc(p_0)=1/{G_{\M_c}(p_0)}>0$\,, completing the
  proof.
\end{proof}

\subsection{M\"obius inversion formula and consequences}
\label{sec:mobi-invers-form}

The notion of M\"obius function of a partial order is due to
Rota~\cite{rota64}. Due to the formal equality $G_\M(X)\mu_\M(X)=1$,
recalled in Theorem~\ref{thr:5} point~\ref{item:5}, the M\"obius
function, element of the incidence algebra in the sense of Rota, is
easily found to be $\nu_\M:\M\times\M\to\ZZ$ given by
$\nu_\M(x,y)=(-1)^{|y|-|x|}$ if $x\leq y$ and if $y-x$ is a clique, and
$0$ otherwise. From this, we derive the following form of the M\"obius
inversion formula \cite[Prop.2]{rota64} for trace monoids.

\begin{proposition}
  \label{thr:4}
Let $\C$ be the set of cliques associated with an independence pair $(\Sigma,I)$.
 If $f,h:\C\to\RR$ are two functions, then $h$~is the M\"obius
transform of $f$, that is, satisfies:
\begin{equation}
\label{eq:23}
\forall c\in\C \qquad h(c)=\sum_{c'\in\C\tq c\leq c'}(-1)^{|c'|-|c|}f(c')\:,
\end{equation}
if and only if the following holds:
\begin{equation}
  \label{eq:14}
  \forall c\in\C\qquad f(c)=\sum_{c'\in\C\tq c'\geq c}h(c')\,.
\end{equation}
\end{proposition}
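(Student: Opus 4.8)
The statement to prove is a Möbius inversion formula relating a function $f:\C\to\RR$ and its Möbius transform $h$, asserting the equivalence of \eqref{eq:23} and \eqref{eq:14}. The plan is to prove the two implications directly by substitution and by using the defining property of the Möbius function $\nu_\M$, which has already been identified in the preceding paragraph as $\nu_\M(x,y)=(-1)^{|y|-|x|}$ when $x\leq y$ and $y-x$ is a clique, and $0$ otherwise. The key structural fact I would rely on is that on the poset of cliques $\C$, two comparable cliques $c\leq c'$ automatically have $c'-c$ a clique (any sub-configuration of a clique is itself a clique, since all its letters are pairwise parallel), so on $\C\times\C$ the function $\nu_\M$ simplifies to $\nu_\M(c,c')=(-1)^{|c'|-|c|}$ whenever $c\leq c'$. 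This is what makes the signs in \eqref{eq:23} exactly the restriction of the general Möbius function to $\C$.

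First I would establish the combinatorial identity at the heart of the inversion, namely the defining relation of the Möbius function:
\begin{equation*}
\sum_{c'\in\C\tq c\leq c'\leq c''}(-1)^{|c'|-|c|}=\delta_{c,c''}\,,
\end{equation*}
for all $c\leq c''$ in $\C$, where $\delta_{c,c''}$ is $1$ if $c=c''$ and $0$ otherwise. Since the interval $[c,c'']$ in $\C$ is isomorphic as a poset to the Boolean lattice of subsets of $c''\setminus c$ (because every clique between $c$ and $c''$ is obtained by adjoining to $c$ an arbitrary subset of the letters of $c''-c$), this reduces to the elementary binomial identity $\sum_{S\subseteq T}(-1)^{|S|}=\delta_{T,\emptyset}$, which is just $(1-1)^{|T|}=0$ for $T\neq\emptyset$ and equals $1$ for $T=\emptyset$. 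This is the one genuinely substantive step, though it is standard.

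With this identity in hand, both implications follow by interchanging the order of summation in a finite double sum. To prove that \eqref{eq:14} implies \eqref{eq:23}, I would substitute $f(c')=\sum_{c''\geq c'}h(c'')$ into the right-hand side of \eqref{eq:23}, interchange the two sums, and collect the coefficient of each $h(c'')$, which is precisely $\sum_{c\leq c'\leq c''}(-1)^{|c'|-|c|}=\delta_{c,c''}$, leaving only $h(c)$. The converse implication, that \eqref{eq:23} implies \eqref{eq:14}, is entirely symmetric: substitute the expression for $h(c')$ into the right-hand side of \eqref{eq:14} and apply the dual form of the same identity, $\sum_{c\leq c'\leq c''}(-1)^{|c''|-|c'|}=\delta_{c,c''}$. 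All sums are finite because $\C$ is finite, so no convergence issues arise and the interchanges are legitimate.

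The main obstacle is not analytic but purely bookkeeping: one must be careful that the poset structure on $\C$ is exactly the Boolean lattice on each interval, so that the generic Möbius function of Rota restricts correctly and the binomial cancellation applies. I would emphasize the remark that comparability in $\C$ forces the difference to be a clique, which is what permits replacing the general $\nu_\M$ by the signed indicator $(-1)^{|c'|-|c|}$ throughout; once that is clear, the proof is a direct appeal to the general Möbius inversion theorem of \cite{rota64} applied to the locally finite poset $(\C,\leq)$, and the two displayed formulas \eqref{eq:23} and \eqref{eq:14} are mutually inverse by construction of $\nu_\M$.
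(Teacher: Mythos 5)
Your proof is correct and follows essentially the same route as the paper, which simply identifies the M\"obius function of the poset and invokes Rota's general inversion theorem; your version additionally unpacks the one computation that citation hides, namely that each interval $[c,c'']$ in $\C$ is a Boolean lattice so that $\sum_{c\leq c'\leq c''}(-1)^{|c'|-|c|}=\delta_{c,c''}$ by the binomial identity, after which both implications are finite double-sum interchanges. The key observation you flag --- that comparability of cliques forces the difference to be a clique, so the general $\nu_\M$ restricts to the signed indicator on $\C\times\C$ --- is exactly right and is what the paper relies on implicitly.
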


The formula~\eqref{eq:14} is called the \emph{M\"obius inversion
  formula}, since it allows to recover any function $f:\C\to\RR$ from
its M\"obius transform. We give an enhanced version in
Proposition~\ref{thr:3}  below which applies outside the mere set~$\C$.

\begin{corollary}
  \label{cor:3}
Let $\C$ be the set of cliques associated with an independence pair $(\Sigma,I)$.
  Let $h:\C\to\RR$ be the M\"obius transform of a function
  $f:\C\to\RR$ such that $f(0)=1$. Then $h(0)=0$ if and only if\/
  $\sum_{c\in\Cstar}h(c)=1$.
\end{corollary}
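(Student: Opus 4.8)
The plan is to apply the M\"obius inversion formula of Proposition~\ref{thr:4} at the single point $c=0$, which is the smallest element of $\C$ for the divisibility order, and therefore satisfies $0\leq c'$ for every clique $c'\in\C$. First I would recall that, by hypothesis, $h$ is the M\"obius transform of $f$, so by the equivalence stated in Proposition~\ref{thr:4} the inversion formula~\eqref{eq:14} holds; in particular it holds at $c=0$.

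Evaluating~\eqref{eq:14} at $c=0$ gives
\begin{equation*}
  f(0)=\sum_{c'\in\C\tq c'\geq 0}h(c')=\sum_{c'\in\C}h(c')\,,
\end{equation*}
since $0$ is below every clique. Now I would split off the term $c'=0$ from the sum, writing $\C=\{0\}\cup\Cstar$, so that
\begin{equation*}
  f(0)=h(0)+\sum_{c'\in\Cstar}h(c')\,.
\end{equation*}
Using the assumption $f(0)=1$, this reads $1=h(0)+\sum_{c'\in\Cstar}h(c')$, from which the claimed equivalence is immediate: $h(0)=0$ holds if and only if $\sum_{c\in\Cstar}h(c)=1$.

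There is essentially no obstacle here; the only point requiring a word of care is the observation that $0$ is indeed the minimum of $(\C,\leq)$, so that the indexing condition $c'\geq 0$ in~\eqref{eq:14} ranges over all of $\C$ and the partition $\C=\{0\}\sqcup\Cstar$ is legitimate. This follows directly from the definition of the left divisibility relation, since $c'=0\cdot c'$ for every $c'$. Both the forward and backward implications come out simultaneously from the single scalar identity $1=h(0)+\sum_{c\in\Cstar}h(c)$, so no separate argument for each direction is needed.
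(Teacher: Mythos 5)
Your proof is correct and follows exactly the paper's own argument: apply the M\"obius inversion formula~\eqref{eq:14} at $c=0$, use that $0$ is the minimum of $(\C,\leq)$ so the sum ranges over all of $\C$, and split off the $h(0)$ term to obtain $1=h(0)+\sum_{c\in\Cstar}h(c)$.
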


\begin{proof}
  The M\"obius inversion formula~\eqref{eq:14} applied to $c=0$ writes
  as follows: $1=h(0)+\sum_{c\in\Cstar}h(c)$, whence the result. 
\end{proof}

We recall that, by convention, the M\"obius transform of a valuation
$f:\M\to\RR_+^*$ is defined as the M\"obius transform of its
restriction to~$\C_\M$\,. 

\begin{proposition}
  \label{prop:4}
  Let $h:\C\to\RR$ be the M\"obius transform of a valuation
  $f:\M\to\RR_+^*$\,, where $\C$ is associated to an independence pair
  $(\Sigma,I)$. Assume that $h(0)=0$, and let $g:\C\to\RR$ be the
  function defined by:
  \begin{equation*}
    \forall c\in\C\quad g(c)=\sum_{c'\in\C\tq c\to c'}h(c')\,.
  \end{equation*}
Then the following formula holds:
\begin{equation*}
  \forall c\in\C\quad g(c)f(c)=h(c).
\end{equation*}
\end{proposition}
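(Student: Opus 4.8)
The plan is to prove the equivalent statement $g(c)=\muc\bigl((p_\alpha)_{\alpha\in\Sigma_c}\bigr)$, where $(p_\alpha)_{\alpha\in\Sigma}$ are the characteristic numbers of $f$, and then multiply through by $f(c)$. Indeed, by~\eqref{eq:40} one has $h(c)=f(c)\,\muc\bigl((p_\alpha)_{\alpha\in\Sigma_c}\bigr)$, and since $f(c)>0$ this reduction is harmless: once $g(c)=\muc\bigl((p_\alpha)_{\alpha\in\Sigma_c}\bigr)$ is established, multiplying by $f(c)$ yields $g(c)f(c)=h(c)$ at once.

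First I would rewrite the index set of the sum defining $g(c)$. By the definition of Cartier-Foata admissibility, $c\to c'$ holds if and only if every $\beta\in c'$ is dependent on some letter of $c$, that is, if and only if no $\beta\in c'$ is parallel to all of $c$; equivalently $c'\cap\Sigma_c=\emptyset$, i.e. $c'\subseteq T$ with $T=\Sigma\setminus\Sigma_c$. Hence $g(c)=\sum_{c'\in\C,\,c'\subseteq T}h(c')$. (The empty clique satisfies $c\to 0$ vacuously and contributes $h(0)$; the hypothesis $h(0)=0$ removes that term, matching the $\Cstar$-indexed sum of~\eqref{eq:12}, but the computation below treats all cliques uniformly and does not rely on it.)

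Next I would substitute the definition~\eqref{eq:5} of the M\"obius transform and interchange the two finite sums:
\begin{align*}
g(c)&=\sum_{c'\subseteq T}\ \sum_{c''\supseteq c'}(-1)^{|c''|-|c'|}f(c'')\\
&=\sum_{c''}(-1)^{|c''|}f(c'')\sum_{c'\subseteq c''\cap T}(-1)^{|c'|},
\end{align*}
all indices ranging over cliques. The interchange is legitimate because $\C$ is finite, and for fixed $c''$ the contributing $c'$ are exactly the subsets of $c''\cap T$, each of which is itself a clique. The inner sum equals $(1-1)^{|c''\cap T|}$, hence vanishes unless $c''\cap T=\emptyset$, in which case it equals $1$. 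The surviving $c''$ are precisely those contained in $\Sigma_c$, that is, the cliques of $\M_c$; using multiplicativity $f(c'')=\prod_{\alpha\in c''}p_\alpha$ gives $g(c)=\sum_{c''\in\C_{\M_c}}(-1)^{|c''|}\prod_{\alpha\in c''}p_\alpha=\muc\bigl((p_\alpha)_{\alpha\in\Sigma_c}\bigr)$, which is the desired equality.

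The only genuinely delicate point is the first step: reading the Cartier-Foata relation $c\to c'$ as the support condition $c'\subseteq\Sigma\setminus\Sigma_c$, and then ensuring that the index sets in the summation interchange align exactly. This alignment rests on the elementary but essential fact that every subset of a clique is again a clique, which is what permits $c'$ to range freely over all subsets of $c''\cap T$. Everything past that is a routine finite inclusion--exclusion, so I expect no further obstruction.
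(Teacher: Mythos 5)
Your proof is correct and is essentially the paper's proof: both expand $h$ by its definition, interchange the two finite sums over cliques, and collapse the inner sum by the binomial formula, arriving at $g(c)=1+\sum_{c'\in\Cstar\tq c'\indep c}(-1)^{|c'|}f(c')=\muc\bigl((p_\alpha)_{\alpha\in\Sigma_c}\bigr)$ before concluding via the multiplicativity of $f$ (your appeal to~\eqref{eq:40} versus the paper's change of variables $c''=c\cdot c'$ is the same computation). The one genuine, if minor, difference is your observation that keeping the empty clique in the sum defining $g$ makes the inner sum the full binomial $(1-1)^{|c''\cap T|}$, so the hypothesis $h(0)=0$ is never actually invoked; the paper sums over $\Cstar$ and must use $h(0)=0$ explicitly to reconstitute the missing term.
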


\begin{proof}
  The identity $g(0)f(0)=h(0)$ is trivial since $0\to c'$ if and only
  if $c'=0$, and thus $g(0)=h(0)$, while $f(0)=1$. For $c\in\Cstar$ a
  non-empty clique, and by definition of $h$ and of~$g$, one has:
  \begin{align*}
    g(c)=\sum_{c'\in\Cstar}(-1)^{|c'|}f(c')\sum_{\substack{\delta\in\Cstar\tq
\delta\leq c'\wedge c\to\delta
}}\1{c\to\delta}\1{\delta\leq
    c'}(-1)^{|\delta|}\,.
  \end{align*}

For any $c'\in\Cstar$, the range of $\delta$ in the above sum is
$\bigl\{\delta\in\Cstar\tq \delta\leq c'\cap\{\alpha\in\Sigma\tq
c\to\alpha\}\bigr\}$, and the  binomial formula yields thus:
\begin{equation*}
  \sum_{\substack{\delta\in\Cstar\tq
\delta\leq c'\wedge c\to\delta
}}(-1)^{|\delta|}=-\tau_c(c')\,,\quad\text{with }\tau_c(c')=
  \begin{cases}
    0,&\text{if $c'\indep c$},\\
1,&\text{if $\neg(c'\indep c)$}.
  \end{cases}
\end{equation*}
We obtain thus:
\begin{equation}
  \label{eq:30}
  g(c)=-\sum_{c'\in\Cstar}(-1)^{|c'|}f(c')\tau_c(c')\,.
\end{equation}

The assumption $h(0)=0$ writes as:
\begin{equation}
  \label{eq:31}
  1+\sum_{c'\in\Cstar\tq c'\indep c}(-1)^{|c'|}f(c')+\sum_{c'\in\Cstar\tq\tau_c(c')=1}(-1)^{|c'|}f(c')=0\,.
\end{equation}
Combining~\eqref{eq:30} and~\eqref{eq:31} yields:
\begin{equation}
  \label{eq:32}
  g(c)=1+\sum_{c'\in\Cstar\tq\ c'\indep c}(-1)^{|c'|}f(c')\,.
\end{equation}
We multiply both sides of~\eqref{eq:32} by $f(c)$ and apply the change
of variable $c''=c\cdot c'$.  Using that $f$ is multiplicative, this
yields:
\begin{equation*}
  f(c)g(c)=f(c)+\sum_{c''\in\Cstar\tq c''>c}(-1)^{|c''|-|c|}f(c'')=h(c)\,,
\end{equation*}
which was to be proved.
\end{proof}

Next result is a generalization of the M\"obius inversion
formula~\eqref{eq:14}. Whereas the original M\"obius inversion formula
is valid for any function $f:\C_\M\to\RR$\,, the generalized version
applies to valuations only. 

Let $f:\M\to\RR_+^*$ be a valuation. In \eqref{eq:23}, the M\"obius
transform of $f$ was defined as a function $h:\C_\M\to\RR$. Here, we
extend the domain of definition of $h$ to the whole monoid $\M$ as
follows.  If $u\in\M$ is a non-empty trace, we write $u=v\cdot c$\,,
where $c\in\Cstar_\M$ is the \emph{last} clique in the Cartier-Foata
normal form of~$u$, and $v$ is the unique trace such that $u=v\cdot c$
holds.  The {\em extended M\"obius transform} $h:\M\to\RR$ is then
defined by:
\begin{equation}
\label{eq:42}
  \forall u\in\M\qquad h(u)=f(v)h(c)\,.
\end{equation}

\begin{proposition}
  \label{thr:3}
  Let $f:\M\to\RR$ be a valuation defined on a trace monoid.  Let
  $u\in\M$ be a non-empty trace, and let $\M(u)$ denote the set:
  \begin{equation}
    \label{eq:37}
    \M(u)=\{u'\in\M\tq \height(u')=\height(u)\,,\;u\leq u'\}\,,
  \end{equation}
  where $\height(\cdot)$ is the height function defined
  in\/~\S~{\normalfont\ref{sec:order-trac-extend}}.  Then we have the identity:
\begin{equation}
  \label{eq:34}
\sum_{u'\in\M(u)}h(u')=f(u)\,,
\end{equation}
where $h:\M\to\RR$ is the extended M\"obius transform of $f$ defined
in\/~\eqref{eq:42}.
\end{proposition}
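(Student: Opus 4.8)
The plan is to prove the identity~\eqref{eq:34} by induction on the height $n=\height(u)$, reducing the statement to the M\"obius inversion formula~\eqref{eq:14} at each stage via the combinatorial description of the order on Cartier-Foata normal forms established in Lemma~\ref{prop:1}. First I would write the Cartier-Foata normal form of $u$ as $c_1\to\ldots\to c_n$, so that $u=v\cdot c_n$ with $v=c_1\cdot\ldots\cdot c_{n-1}$, and I would describe explicitly the set $\M(u)$. By Lemma~\ref{prop:1}, a trace $u'$ of height exactly $n$ satisfies $u\leq u'$ if and only if its normal form is $d_1\to\ldots\to d_n$ with $d_i=c_i\cdot\gamma_i$ for cliques $\gamma_i$ subject to $\gamma_i\indep c_i,\ldots,c_n$. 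The height constraint $\height(u')=n$ forces the last clique $d_n$ to be a genuine clique extending $c_n$, i.e.\ $d_n=c_n\cdot\gamma_n$ with $c_n\leq d_n$, and pins down which extensions are admissible.

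The key step is to factor the sum over $\M(u)$ according to the last clique $d_n$. Writing $u'=v'\cdot d_n$ where $v'$ ranges over traces of height $n-1$ with $v\leq v'$ and with $v'$ compatible (in the heap sense) with the remaining freedom, the extended M\"obius transform~\eqref{eq:42} gives $h(u')=f(v')h(d_n)$. The plan is to organize the double sum as
\begin{equation*}
\sum_{u'\in\M(u)}h(u')=\sum_{d_n\geq c_n}h(d_n)\Bigl(\sum_{v'}f(v')\Bigr)\,,
\end{equation*}
where the inner sum runs over the appropriate traces $v'$ of height $n-1$. The multiplicativity~\eqref{eq:4} of $f$, together with the parallelism conditions of Lemma~\ref{prop:1}, should let me factor $f(u')=f(v')f(d_n)$ cleanly and recombine. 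The induction hypothesis applied to the height-$(n-1)$ trace $v$ collapses the inner sum to $f(v)$, after which the outer sum over $d_n\geq c_n$ is exactly the M\"obius inversion formula~\eqref{eq:14} applied to the clique $c_n$ inside the submonoid $\M_{c_{n-1}}$ (the letters allowed to extend $c_n$ are precisely those parallel to the preceding layers). This yields $\sum_{d_n\geq c_n}h(d_n)=f(c_n)$ in the relevant restricted clique poset, and then $f(v)f(c_n)=f(u)$ by multiplicativity, closing the induction. The base case $n=1$, where $u=c_1$ is itself a clique and $\M(u)=\{c'\in\C\tq c'\geq c_1\}$, is exactly~\eqref{eq:14}.

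The main obstacle I anticipate is making the factorization of the inner sum over $v'$ rigorous: one must check that as $u'$ ranges over $\M(u)$, the pair $(v',d_n)$ ranges over a genuine product set, with the admissible $v'$ independent of the chosen $d_n$, and that the extension cliques $\gamma_i$ for $i<n$ interact correctly with the last-clique extension $\gamma_n$. Concretely, I will need to verify that the constraint $\gamma_n\indep c_n$ and the constraints governing $v'$ decouple, which requires a careful reading of the condition $\gamma_i\indep c_i,\ldots,c_n$ from Lemma~\ref{prop:1}: the parallelism of $\gamma_n$ with the earlier layers is what guarantees $d_n$ sits in the correct restricted clique poset and that $v'$ still has height exactly $n-1$. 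Once this decoupling is confirmed, the rest is a routine combination of the induction hypothesis, multiplicativity, and~\eqref{eq:14}, so the bookkeeping of the index sets is where I expect to spend the most care.
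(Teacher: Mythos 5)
Your base case is right, and your use of Lemma~\ref{prop:1} to parametrize $\M(u)$ by tuples $(\gamma_1,\ldots,\gamma_n)$ is exactly how the paper starts. But the inductive step has a genuine gap, in two places.

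First, the factorization
\begin{equation*}
\sum_{u'\in\M(u)}h(u')=\sum_{d_n\geq c_n}h(d_n)\Bigl(\sum_{v'}f(v')\Bigr)
\end{equation*}
does not hold, because the index set is not a product: the Cartier--Foata condition $d_{n-1}\to d_n$ couples the admissible last cliques $d_n$ to the last clique $d_{n-1}=c_{n-1}\cdot\gamma_{n-1}$ of the prefix $v'$. For a \emph{fixed} $v'$, the sum over admissible $d_n$ is $\lambda(d_{n-1},c_n)=\sum_{\delta\tq d_{n-1}\to\delta,\ \delta\geq c_n}h(\delta)$, and the paper's Lemma~\ref{lem:3} shows this equals $f(c_n)\sum_{\delta\indep d_{n-1},c_n}(-1)^{|\delta|}f(\delta)$ --- not $f(c_n)$, and still depending on $v'$ through $d_{n-1}$. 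Your identification of this sum with M\"obius inversion ``inside $\M_{c_{n-1}}$'' is also off: the constraint $d_{n-1}\to d_n$ is a \emph{dependence} condition on the letters of $d_n$, not a parallelism condition defining a submonoid, and $h$ is the M\"obius transform in the full clique poset, so~\eqref{eq:14} for a submonoid does not apply to it. The residual factors $\sum_{\delta\indep d_{n-1},c_n}(-1)^{|\delta|}f(\delta)$ are exactly what the paper's Lemma~\ref{lem:4} spends its effort on: they only collapse to $1$ after summing over \emph{all} the $\gamma_i$ simultaneously, via a layer-by-layer binomial cancellation, not layer by layer in isolation.

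Second, even granting the factorization, your inner sum is $\sum_{v'}f(v')$ --- the extended M\"obius transform~\eqref{eq:42} weights the prefix by $f$, not by $h$ --- whereas the induction hypothesis controls $\sum_{v'\in\M(v)}h(v')$. These are different quantities, and $\sum_{v'\in\M(v)}f(v')\neq f(v)$ in general. This type mismatch is structural: any induction that peels off the last clique will produce $f$-weighted sums over prefixes that the induction hypothesis cannot absorb. The paper therefore does not induct on height at all; it pulls $f(v)$ out front, evaluates the innermost sum over $\gamma_n$ once (Lemma~\ref{lem:3}), extracts the factor $f(c_n)$, and proves the leftover multiple sum $S_2$ equals $1$ by a separate telescoping argument (Lemma~\ref{lem:4}). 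To repair your proof you would need to replace the inductive step by something equivalent to those two lemmas.
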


\begin{proof}
  We fix a non-empty trace $u\in\M$, and we set
  \begin{equation*}
    S_0=\sum_{u'\in\M(u)}h(u')\,.
  \end{equation*}
  Let $c_1\to\ldots\to c_n$ be the Cartier-Foata normal form of~$u$,
  and set $c=c_n$ and $v=c_1\cdot\ldots\cdot c_{n-1}$. We apply Lemma~\ref{prop:1} to derive:
\begin{align}
\label{eq:43}
S_0
&=\sum_{(\gamma_1,\ldots,\gamma_n)\in J(c_1,\ldots,c_n)}
  h(c_1\cdot\gamma_1\cdot\ldots\cdot c_n\cdot \gamma_n)\,,
\end{align}
where we have set:
\begin{equation*}
\begin{split}
  J(c_1,\ldots,c_n)=\bigl\{(\gamma_1,\ldots,\gamma_n)\in\C^n\tq
\gamma_i\indep c_i,\ldots,c_n\text{ for $1\leq i\leq n$\,,}\\
c_1\cdot\gamma_1\to\ldots\to c_n\cdot\gamma_n\bigr\}\,.
\end{split}
\end{equation*}
By definition of~$h$, this yields:
\begin{align}
  S_0&=f(v)S_1\,,&\text{with\quad}S_1&=\sum_{(\gamma_1,\ldots,\gamma_n)\in
    J(c_1,\ldots,c_n)}f(\gamma_1)\dots f(\gamma_{n-1})h(c_n\cdot\gamma_n)\,.
\end{align}
We define, for $x,y\in\C$:
\begin{equation}
\label{eq:44}
\lambda(x,y)=\sum_{\delta\in\C\tq (x\to\delta)\wedge (\delta\geq y)}h(\delta)\,.
\end{equation}
Rewriting $S_1$ using the above notation, we get:
\begin{align*}
  S_1&=\sum_{(\gamma_1,\ldots,\gamma_{n-1})\in
    K(c_1,\ldots,c_n)} f(\gamma_1)\cdots f(\gamma_{n-1})\lambda(c_{n-1}\cdot\gamma_{n-1},c_n)\,,
\end{align*}
where we have set:
\begin{multline*}
  K(c_1,\ldots,c_{n})=\bigl\{(\gamma_1,\ldots,\gamma_{n-1})\in\C^{n-1}\tq
\gamma_i\indep c_i,\ldots,c_n\text{ for $1\leq i\leq n-1$,}\\
c_1\cdot\gamma_1\to\ldots\to c_{n-1}\cdot\gamma_{n-1}\bigr\}\,.
\end{multline*}
Applying Lemma~\ref{lem:3} below yields, for any $\gamma_{n-1}$ in the
scope of the sum defining~$S_1$\,:
\begin{align*}
\lambda(c_{n-1}\cdot\gamma_{n-1},c_n)&=f(c_n)\sum_{\delta\in\C\tq\delta\indep c_{n-1}\cdot \gamma_{n-1},\,c_n}(-1)^{|\delta|}f(\delta)\,.
\end{align*}
Therefore $S_1=f(c_n)S_2$ where $S_2$ is defined by:
\begin{equation}
\label{eq:45}
  S_2=\sum_{\substack{(\gamma_1,\ldots,\gamma_{n-1})\in
      K(c_1,\ldots,c_n)\\
\delta\in\C\tq \delta\indep c_{n-1}\cdot\gamma_{n-1},\,c_n
}}
f(\gamma_1)\cdots f(\gamma_{n-1})(-1)^{|\delta|}f(\delta)\,.
\end{equation}
Since $S_2=1$ according to Lemma~\ref{lem:4} below, we conclude
that $S_1=f(c_n)$ and finally $S_0=f(v)f(c_n)=f(u)$, which was to be proved.
\end{proof}

\medskip
In the course of the above proof, we have used the following two lemmas.

\begin{lemma}
  \label{lem:3}
If $x,y\in\C$ are two cliques such that $x\to y$ holds, then the
quantity $\lambda(x,y)$ defined in~\eqref{eq:44} satisfies:
\begin{equation*}
  \lambda(x,y)=f(y)\sum_{\delta\in\C\tq\delta\indep x,y}(-1)^{|\delta|}f(\delta)\,.
\end{equation*}
\end{lemma}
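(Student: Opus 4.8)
The plan is to prove the identity by unfolding the definition of the Möbius transform $h$ inside $\lambda(x,y)$, interchanging the two resulting summations, and collapsing the inner sum with the elementary binomial identity $\sum_{S\subseteq T}(-1)^{|S|}=\1{T=\emptyset}$. Before doing so, I would record the combinatorial translation of the two constraints appearing in~\eqref{eq:44}. With $\Sigma_x=\{\alpha\in\Sigma\tq\alpha\indep x\}$ as in~\eqref{eq:39}, a clique $\delta$ satisfies $x\to\delta$ if and only if $\delta\cap\Sigma_x=\emptyset$: a letter $\beta$ lies in $\Sigma_x$ exactly when it is parallel to all of $x$, that is, when no letter of $x$ depends on it. Likewise, for cliques the divisibility $y\leq\delta$ amounts to the set inclusion $y\subseteq\delta$, and then $\delta\setminus y$ is again a clique, parallel to $y$. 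Since $x\to y$ is assumed, $y$ itself is disjoint from $\Sigma_x$.

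Inserting the defining formula~\eqref{eq:23} of $h$ into~\eqref{eq:44} and interchanging the order of summation, I would obtain
\[
\lambda(x,y)=\sum_{\delta'\in\C}f(\delta')\sum_{\substack{\delta\in\C\tq y\leq\delta\leq\delta'\\ \delta\cap\Sigma_x=\emptyset}}(-1)^{|\delta'|-|\delta|}\,.
\]
For a fixed $\delta'$ containing $y$, every admissible $\delta$ is of the form $\delta=y\cup S$, where $S$ ranges over all subsets of $T:=(\delta'\setminus y)\setminus\Sigma_x$; here the hypothesis $y\cap\Sigma_x=\emptyset$ guarantees that $\delta=y$ is itself admissible. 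The inner sum then factors as $(-1)^{|\delta'|-|y|}\sum_{S\subseteq T}(-1)^{|S|}$, which vanishes unless $T=\emptyset$ and equals $(-1)^{|\delta'|-|y|}$ otherwise. Consequently only the cliques $\delta'$ with $y\subseteq\delta'$ and $\delta'\setminus y\subseteq\Sigma_x$ contribute.

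Finally I would reparametrise the surviving terms by $\zeta:=\delta'\setminus y$. The condition $\delta'\setminus y\subseteq\Sigma_x$ together with the clique property of $\delta'=y\cup\zeta$ translates exactly into $\zeta\indep x$ and $\zeta\indep y$, and conversely every clique $\zeta\indep x,y$ arises this way, two parallel cliques being disjoint. Using the multiplicativity~\eqref{eq:4}, $f(\delta')=f(y)f(\zeta)$ and $(-1)^{|\delta'|-|y|}=(-1)^{|\zeta|}$, so that
\[
\lambda(x,y)=f(y)\sum_{\zeta\in\C\tq\zeta\indep x,y}(-1)^{|\zeta|}f(\zeta)\,,
\]
which is the claimed formula.

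I do not anticipate a genuine obstacle here; the proof is a finite combinatorial manipulation. The only points requiring care are the correct bookkeeping of the three constraints $y\leq\delta$, $\delta\cap\Sigma_x=\emptyset$ and $\delta\leq\delta'$ when the two sums are interchanged, and the observation that the hypothesis $x\to y$ is precisely what makes $\delta=y$ admissible, so that the collapsed family of surviving $\delta'$ is nonempty and the factorisation $f(\delta')=f(y)f(\zeta)$ is available.
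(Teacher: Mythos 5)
Your proof is correct and follows essentially the same route as the paper's: expand $h$ via its definition, swap the order of summation, collapse the inner alternating sum over $\delta$ with the binomial identity (using $x\to y$ to reduce the condition $x\to\delta$ to one on $\delta\setminus y$), and reparametrise the surviving outer terms by $\zeta=\delta'\setminus y$ together with multiplicativity of $f$. No issues.
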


\begin{proof}
    By definition of the M\"obius transform~$h$, one has:
  \begin{align}
\label{eq:25}    \lambda(x,y)& \ =\sum_{\delta\in\C\tq (x\to\delta)\wedge
  (y \leq \delta)}h(\delta) \ =\sum_{z\in\C\tq y\leq
      z}(-1)^{|z|}f(z)H(x,y,z)\\
\notag &\qquad \text{where } \quad H(x,y,z)=\sum_{\delta\in\C\tq (x\to\delta)
    \wedge (y\leq
  \delta\leq z)}(-1)^{|\delta|}\,.
  \end{align}

Consider $\delta$ as in the sum defining $H(x,y,z)$. Since $x\to y$
holds by assumption, the following equivalence holds: $x\to\delta\iff
x\to(\delta-y)$. The binomial formula yields thus:
\begin{equation*}
  H(x,y,z)=\begin{cases}
(-1)^{|y|},&\text{if $(z-y)\indep x$\,,}\\
0,&\text{otherwise.}
  \end{cases}
\end{equation*}

Reporting the latter value of $H(x,y,z)$ in~\eqref{eq:25} and
considering the change of variable $z=y\cdot\delta$ yields the
expected expression for~$\lambda(x,y)$.
\end{proof}

\begin{lemma}
  \label{lem:4}
For any integer $n\geq1$ and for any cliques $c_1,\ldots,c_n$ such
that $c_1\to\ldots\to c_n$ holds, the quantity $S_2$ defined
in~\eqref{eq:45} satisfies~$S_2=1$.
\end{lemma}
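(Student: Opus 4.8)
The plan is to prove $S_2=1$ for $n\geq 2$ by a descending recursion on the chain length, after enlarging $S_2$ into a one-parameter family of sums carrying an auxiliary ``blocking'' constraint. For $2\leq m\leq n$ and a set $B\subseteq\Sigma$, define
\[
T_m(B)=\sum_{(\gamma_1,\ldots,\gamma_{m-1})}\Bigl(\prod_{i=1}^{m-1}f(\gamma_i)\Bigr)\sum_{\delta\in\C}(-1)^{|\delta|}f(\delta),
\]
the outer sum ranging over tuples with $\gamma_i\indep c_i,\ldots,c_m,B$ and $c_1\gamma_1\to\ldots\to c_{m-1}\gamma_{m-1}$, and the inner sum over $\delta\indep c_{m-1}\gamma_{m-1},c_m,B$. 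Comparing with~\eqref{eq:45} gives $S_2=T_n(\emptyset)$. I would then establish the one-step reduction $T_m(B)=T_{m-1}(B\cup c_m)$ for $m\geq 3$ together with the base value $T_2(B)=1$, and conclude $S_2=T_n(\emptyset)=T_{n-1}(c_n)=\cdots=T_2(c_3\cup\cdots\cup c_n)=1$.

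The heart of the argument is the reduction step, obtained by summing out the innermost pair $(\gamma_{m-1},\delta)$ for fixed $\gamma_1,\ldots,\gamma_{m-2}$. Since $\delta\indep\gamma_{m-1}$, the two cliques are disjoint and parallel, so $\epsilon=\gamma_{m-1}\cdot\delta$ is a clique and $f(\gamma_{m-1})f(\delta)=f(\epsilon)$ by~\eqref{eq:4}; moreover the admissible $\epsilon$ are exactly the cliques parallel to $c_{m-1},c_m,B$. Writing $P=c_{m-2}\gamma_{m-2}$, I would observe that in the arrow $P\to c_{m-1}\gamma_{m-1}$ the letters of $c_{m-1}$ are automatically covered, because $c_{m-2}\to c_{m-1}$ holds in the given chain and $c_{m-2}\subseteq P$; hence the arrow is equivalent to requiring every letter of $\gamma_{m-1}$ to be dependent on $P$. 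Thus, with $D=\{\alpha\in\epsilon\tq\neg(\alpha\indep P)\}$, the admissible splittings of a fixed $\epsilon$ are precisely $\gamma_{m-1}\subseteq D$, $\delta=\epsilon\setminus\gamma_{m-1}$, and the signed weight factors as
\[
\sum_{\gamma_{m-1}\subseteq D}f(\gamma_{m-1})(-1)^{|\delta|}f(\delta)=(-1)^{|\epsilon|}f(\epsilon)\sum_{\gamma\subseteq D}(-1)^{|\gamma|}=(-1)^{|\epsilon|}f(\epsilon)\,0^{|D|}.
\]
By the binomial identity this vanishes unless $D=\emptyset$, i.e.\ $\epsilon\indep P$, in which case $\gamma_{m-1}=0$ and $\delta=\epsilon$. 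Only the $\epsilon$ parallel to $P,c_{m-1},c_m,B$ survive, each contributing $(-1)^{|\epsilon|}f(\epsilon)$; the residual chain constraint drops to $c_1\gamma_1\to\ldots\to c_{m-2}\gamma_{m-2}$, and a direct comparison of the resulting constraints with the definition shows that the total is exactly $T_{m-1}(B\cup c_m)$.

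For the base case $m=2$ the same computation applies, except that there is no arrow constraint on $\gamma_1$ (the word $c_1\gamma_1$ is a single clique), so $\gamma_1$ ranges over all sub-cliques of $\epsilon=\gamma_1\cdot\delta$; the free binomial sum $\sum_{\gamma_1\subseteq\epsilon}(-1)^{|\epsilon|-|\gamma_1|}=0^{|\epsilon|}$ then leaves only $\epsilon=0$, giving $T_2(B)=f(0)=1$. Iterating the reduction down to $m=2$ yields $S_2=1$ for all $n\geq 2$. I expect the main obstacle to be the careful bookkeeping of the parallelism constraints across the reduction, in particular verifying that the arrow $P\to c_{m-1}\gamma_{m-1}$ splits exactly into the automatic condition on $c_{m-1}$ and the condition $\gamma_{m-1}\subseteq D$ on $\gamma_{m-1}$, which is what makes the binomial cancellation clean. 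Finally, the boundary case $n=1$ is degenerate, there being no clique $c_{m-1}$ to sum against: in the proof of Proposition~\ref{thr:3} it is already covered by the M\"obius inversion formula~\eqref{eq:14}, which gives $\sum_{\delta\geq c_1}h(\delta)=f(c_1)$ and hence the corresponding value $1$ directly.
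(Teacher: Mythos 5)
Your proof is correct and is essentially the paper's own argument: forming $\epsilon=\gamma_{m-1}\cdot\delta$ and killing all splittings with $D\neq\emptyset$ via the binomial identity is exactly the paper's change of variable $\delta'=\delta\cdot\gamma_{n-1}$ followed by the evaluation $R(\gamma_{n-2})=\1{\delta\indep c_{n-2}\cdot\gamma_{n-2}}$, and your family $T_m(B)$ merely makes explicit the recursion the paper invokes with ``applying recursively the same transformation.'' The base case and the degenerate case $n=1$ are handled the same way as in the paper.
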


\begin{proof}
  We substitute the variable $\delta'=\delta\cdot\gamma_{n-1}$ to
  $\delta$ in the defining sum for~$S_2$. For each $\gamma_{n-1}$ in
  the scope of the sum, one has $\gamma_{n-1}\indep c_{n-1},c_n$\,, as
  specified by the definition of $K(c_1,\ldots,c_n)$. Hence the set
  $\{\delta\in\C\tq \delta\indep c_{n-1}\cdot\gamma_{n-1},\,c_n\}$
  corresponds to the set $\{\delta'\in\C\tq
  (\delta'\geq\gamma_{n-1})\wedge (\delta'\indep c_{n-1}\,,\,c_n) \}$,
  and the change of variable yields:
\begin{gather}
\label{eq:53}
  S_2=\sum_{\substack{\delta\in\C\tq
\delta\indep c_{n-1},c_n}}
(-1)^{|\delta|}f(\delta)
\sum_{ (\gamma_1,\ldots,\gamma_{n-2}) \in L(c_1,\ldots, c_n)}  f(\gamma_1)\cdots f(\gamma_{n-2})
R(\gamma_{n-2})\,,\\
\notag
\begin{split}
\text{with\quad}  L(c_1,\ldots,c_n)=\bigl\{(\gamma_1,\ldots,\gamma_{n-2})\in\C^{n-2}\tq
\gamma_i\indep c_i,\ldots,c_n\text{ for $1\leq i\leq n-2$,}\\
  c_1\cdot\gamma_1\to\ldots\to c_{n-2}\cdot\gamma_{n-2}\bigr\}
\end{split}\\
\notag
\text{and\quad}R(\gamma_{n-2})=\sum_{\substack{\gamma_{n-1}\in\C\tq\\
\gamma_{n-1}\indep c_{n-1}\,,\,c_n\\
\gamma_{n-1}\leq\delta\\
c_{n-2}\cdot\gamma_{n-2}\to\gamma_{n-1}
}}(-1)^{|\gamma_{n-1}|}\,.
\end{gather}

In the scope of the sum defining $R(\gamma_{n-2})$, the
condition ``$c_{n-2}\cdot \gamma_{n-2}\to c_{n-1}\cdot\gamma_{n-1}$'' has been
replaced by ``$c_{n-2}\cdot\gamma_{n-2}\to\gamma_{n-1}$'', which is
equivalent since $c_{n-2}\to c_{n-1}$ already holds by assumption.

Since $\delta\indep c_{n-1},c_n$\,, and by the binomial formula,
the sum defining $R(\gamma_{n-2})$ evaluates as follows:
\begin{equation}
  \label{eq:52}
  R(\gamma_{n-2})=\1{\delta\indep c_{n-2}\cdot\gamma_{n-2}}\,.
\end{equation}
Substituting the right side of~\eqref{eq:52} into~\eqref{eq:53}, we
obtain:
\begin{align*}
  S_2&=\sum_{\begin{subarray}{l}
(\gamma_1,\ldots,\gamma_{n-2})\in L(c_1,\ldots, c_n) \\
\delta\in\C \tq \delta \indep c_{n-2}\cdot\gamma_{n-2},c_{n-1},c_n
\end{subarray}
}
f(\gamma_1)\cdots f(\gamma_{n-2})(-1)^{|\delta|}f(\delta)\,.
\end{align*}
Applying recursively the same transformation eventually yields:
\begin{align*}
  S_2&=\sum_{\begin{subarray}{l}
\quad\gamma,\delta\in\C\tq\\
\gamma\indep c_1,\ldots,c_n\\
\phantom{\gamma}\llap{\scriptsize$\delta$}\indep c_1\cdot\gamma,c_2,\ldots,c_n
\end{subarray}
}
f(\gamma)(-1)^{|\delta|}f(\delta)\,,
\end{align*}
and after yet the same transformation:
\begin{align*}
  S_2&=\sum_{\delta\in\C\tq\delta\indep
    c_1,\ldots,c_n}(-1)^{|\delta|}f(\delta)\sum_{\gamma\leq\delta} (-1)^{|\gamma|}=f(0)=1\,,
\end{align*}
completing the proof.
\end{proof}

\subsection{Combinatorial lemmas}
\label{sec:combinatorial-lemmas}

The following result is known, see for
instance~\cite[Lemma~3.2]{krob03}. We provide an alternative proof
below.

\begin{lemma}
  \label{lem:1}
  If $\M$ is an irreducible trace monoid, then $(\Cstar_\M,\to)$ is a
  strongly connected graph.
\end{lemma}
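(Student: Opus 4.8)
The plan is to show that $(\Cstar_\M,\to)$ is strongly connected by exhibiting, for any two non-empty cliques $c,c'\in\Cstar_\M$, a path from $c$ to $c'$ in the Cartier-Foata acceptor graph. The key leverage is the irreducibility hypothesis: the dependence graph $(\Sigma,D)$ is connected. First I would reduce the general problem to a statement about single letters. Observe that for any clique $c\in\Cstar_\M$ and any letter $\alpha$, the relation $c\to\{\alpha\}$ holds precisely when $\alpha$ is dependent on some letter of~$c$ (i.e. $\neg(\{\alpha\}\indep c)$), and $\{\alpha\}\to c'$ holds when every letter of $c'$ depends on~$\alpha$. Since a clique $c'$ is composed of pairwise-independent letters, a sufficient route into $c'$ would be to first reach a suitable single letter and then proceed.

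The core combinatorial step I would carry out is the following: for any two letters $\alpha,\beta\in\Sigma$, there is a path $\{\alpha\}\to\ldots\to\{\beta\}$ in $(\Cstar_\M,\to)$ consisting entirely of singletons. To prove this, I would use connectedness of $(\Sigma,D)$ to find a chain of letters $\alpha=\alpha_0,\alpha_1,\ldots,\alpha_m=\beta$ with $(\alpha_{i},\alpha_{i+1})\in D$ for each~$i$. For dependent consecutive letters, the singleton transition $\{\alpha_i\}\to\{\alpha_{i+1}\}$ holds directly by the definition of Cartier-Foata admissibility, since $\alpha_{i+1}$ depends on the unique letter $\alpha_i$ of the source clique. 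Concatenating these transitions gives a singleton path between any two letters, which establishes strong connectivity on the sub-graph of singletons.

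Next I would handle entry into and exit from arbitrary cliques. Given a general target clique $c'=\{\beta_1,\ldots,\beta_r\}$, I would show how to reach it: I claim that from any singleton $\{\alpha\}$ with $\alpha$ dependent on every $\beta_j$, the transition $\{\alpha\}\to c'$ holds, but in general no single letter dominates all of $c'$, so instead I would reach $c'$ by first routing to a letter $\beta_1\in c'$ via the singleton path, then noting that from an appropriate predecessor one can transition into the full clique. More robustly, to go from a source clique $c$ to a target $c'$, I would pick any letter $\alpha\in c$ and any letter $\beta\in c'$, observe that $c\to\{\alpha'\}$ for some letter $\alpha'$ dependent on $\alpha$ (taking $\alpha'=\alpha$ works since $\alpha$ depends on itself by reflexivity of~$D$), travel along singletons from $\{\alpha\}$ to a letter adjacent to all of~$c'$ if one exists, and otherwise assemble the entry into $c'$ one step at a time using the singleton chain together with a final admissible transition into~$c'$.

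The main obstacle I anticipate is the exit-and-entry bookkeeping at the clique level: while strong connectivity on singletons follows cleanly from connectedness of~$(\Sigma,D)$, verifying that every non-singleton clique $c'$ is reachable requires producing a predecessor clique $d$ with $d\to c'$, i.e. a clique every letter of which is witnessed as a dependency for each letter of~$c'$. The cleanest fix is to use $c'$ itself or a singleton $\{\beta\}$ with $\beta\in c'$: since each $\beta_j\in c'$ depends on itself, one checks that any clique $d$ containing at least one letter dependent on each $\beta_j$ satisfies $d\to c'$; in particular the source clique $c$ can be connected to $c'$ by first moving along singletons into a letter $\beta\in c'$, and then using that $\{\beta\}\to c'$ holds when every other letter of $c'$ is independent of~$\beta$ (which it is, as $c'$ is a clique) but depends on \emph{some} letter of the source — so the transition into $c'$ must be staged carefully. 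I would resolve this by routing through the singleton $\{\beta\}$ and invoking that $\{\beta\}\to c'$ requires each $\beta_j$ to depend on $\beta$; since the $\beta_j$ are independent of $\beta$ for $j\neq$ the index of $\beta$, this fails directly, so the correct construction is to reach $c'$ from a clique whose letters collectively dominate~$c'$, and such a clique always exists by taking, for each $\beta_j$, a dependent letter (e.g. $\beta_j$ itself) and checking these can be gathered along an admissible path. Making this gathering argument precise is where the real work lies.
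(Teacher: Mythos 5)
Your reduction to singleton-to-singleton paths is correct and matches the easy half of the argument: connectedness of $(\Sigma,D)$ gives a path $\{\alpha\}\to\ldots\to\{\beta\}$ along any dependence chain, and $c\to\{\alpha\}$ for $\alpha\in c$ is immediate from reflexivity of~$D$. But there is a genuine gap exactly where you write that ``making this gathering argument precise is where the real work lies'': you never actually produce, for a non-singleton target clique~$c'$, a reachable predecessor $d$ with $d\to c'$. Every candidate you consider (a single letter dominating all of~$c'$, the singleton $\{\beta\}$ with $\beta\in c'$, the clique $c'$ itself) is either shown by you to fail or is circular, and your final sentence defers the construction rather than giving it. Without that step the lemma is not proved.

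The missing ingredient is a controlled way to \emph{grow} a clique by one letter. The paper proves the claim: if $c\in\Cstar_\M$ and $\alpha_0\indep c$, then there is a path $c\to\gamma_1\to\ldots\to\gamma_p$ with $\gamma_p=c\cdot\alpha_0$. One picks $\alpha_1\in c$, a dependence chain $\beta_0=\alpha_1,\beta_1,\ldots,\beta_p=\alpha_0$, and sets $\gamma_j=\{\beta_j\}\cup\{\alpha\in c\tq j\geq i(\alpha)\}$, where $i(\alpha)$ is the least $j$ such that $\alpha\indep\beta_j,\ldots,\beta_p$. Each letter of $c$ is inserted into the walking clique exactly at the step where it becomes independent of the remaining tail of the chain; this guarantees both that each $\gamma_j$ is a clique and that $\gamma_{j-1}\to\gamma_j$ holds (a letter $\alpha$ entering at step $j$ satisfies $i(\alpha)=j$, so by minimality it depends on $\beta_{j-1}\in\gamma_{j-1}$), and $\gamma_p=c\cdot\alpha_0$ since $i(\alpha)\leq p$ for all $\alpha\in c$. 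Iterating this claim from a singleton $\{\beta_1\}$ and adding the letters of $c'$ one at a time reaches~$c'$; combined with your singleton paths this completes the argument. If you supply this (or an equivalent) construction, your outline becomes a proof along essentially the same lines as the paper's.
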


\begin{proof}
  Consider the following claim~$(*)$, which we prove under the
  hypothesis that $(\Sigma,D)$ is connected:
\begin{center}
  \begin{minipage}{.10\linewidth}
    $(*)$%
  \end{minipage}%
  \begin{minipage}[c]{.90\linewidth}
\noindent
Let $c$ be a non-empty clique of $(\Sigma,I)$, and let $\alpha_0\in\Sigma$
be a letter such that $\alpha_0\indep c$ holds. Then there exists an
integer $p\geq 1$ and $p$ non-empty cliques $\gamma_1,\ldots,\gamma_p$
such that $\gamma_p=c\cdot\alpha_0$ and $c\to\gamma_1\to\ldots\to\gamma_p$
holds.
  \end{minipage}
\end{center}

Indeed, since $c\neq0$, pick $\alpha_1\in c$. Since $(\Sigma,D)$ is
assumed to be connected, there is a sequence of letters
$\beta_1,\ldots,\beta_p\in\Sigma$ such that, putting
$\beta_0=\alpha_1$, one has $(\beta_i,\beta_{i+1})\in D$ for all
$i\in\{0,\ldots,p-1\}$, and $\beta_p=\alpha_0$\,. Next, for each letter
$\alpha\in c$, consider the following integer:
\begin{equation*}
  i(\alpha)=\min\bigl\{j\in\{1,\ldots,p\}\tq
\alpha\indep\beta_j,\beta_{j+1},\ldots,\beta_p\bigr\}\,.
\end{equation*}
Since $\beta_p=\alpha_0$, and since $\alpha_0\indep c$ by assumption, one has
indeed $i(\alpha)\leq p$ for all $\alpha\in c$. Consider the sequence
of cliques $\gamma_1,\ldots,\gamma_p$ defined as follows:
\begin{equation*}
  \forall j\in\{1,\ldots,p\},\qquad
\gamma_j=\{\beta_j\}\cup\{\alpha\in c\tq j\geq i(\alpha)\}.
\end{equation*}

We leave it to the reader to check that $\gamma_1,\ldots,\gamma_p$
thus defined satisfy the claim~$(*)$. The statement of the lemma
follows easily from the claim.
\end{proof}

Next lemma will be a key in proving the uniqueness of uniform
measures.  We recall first that for any trace monoid $\M=\M(\Sigma,I)$,
the \emph{mirror} mapping $\rev:\M\to\M$ is defined as the quotient
mapping of the mapping $\Sigma^*\to\Sigma^*$ defined on words by
$\rev(\alpha_1\cdots\alpha_n)=\alpha_n\cdots\alpha_1$\,. Given
$u\in\M$, the heap of $\rev(u)$ is obtained from the heap of $u$ by
considering it upside-down. If $s_1\to\ldots\to s_k$ and
$r_1\to\ldots\to r_\ell$ are the respective Cartier-Foata decompositions of $u$
and $\rev(u)$, then:
\begin{align}
\label{eq-upside}
k&=\ell\,,&&
\begin{cases}
  r_k\leq s_1\\
r_{k-1}\cdot r_k\leq s_2\cdot s_1\\
\cdots\\
r_1\cdot r_2\cdot\ldots\cdot r_k\leq s_k\cdot s_{k-1}\cdot\ldots\cdot s_1
\end{cases}
%
\end{align}
The properties in \eqref{eq-upside} are easy to visualize using the heap
interpretation.

\begin{lemma}[Hat lemma]
  \label{lem:7}
\textbf{(Hat lemma)}\quad
Let $\M$ be an irreducible trace monoid. Then there exists a trace
$w\in\M$ with the following property:
\begin{equation}
  \label{eq:29}
  \forall u,v\in\M\qquad (|u|=|v|)\wedge (u\neq v)\ \Longrightarrow \ \up(u\cdot
  w)\;\cap\up(v\cdot w)=\emptyset\,.
\end{equation}
\end{lemma}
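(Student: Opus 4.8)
The plan is to build the ``hat'' trace $w$ so that appending it to any trace $u$ reconstructs complete information about $u$ from the Cartier-Foata decomposition of $u\cdot w$. The obstruction to the implication in \eqref{eq:29} is precisely the phenomenon described in Lemma~\ref{prop:1}: two distinct traces $u,v$ of the same length can have compatible extensions because the final cliques of their Cartier-Foata normal forms admit ``filling'' by parallel cliques $\gamma_i$. The idea is that if $w$ forces every letter to be blocked from sliding upward, then no such filling is possible past the junction, and the decomposition of $u\cdot w$ rigidly encodes $u$.

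First I would look for a single trace $w$ whose support is all of $\Sigma$ and which is ``dense'' enough that $c\to\cut_1(w)$ fails to leave any letter of $\Sigma$ free; concretely, I would aim for a $w$ such that for every clique $c\in\C$, the first clique in the Cartier-Foata normal form of $c\cdot w$ depends injectively on $c$. The natural construction is iterative using the irreducibility of $\M$: by Lemma~\ref{lem:1} the graph $(\Cstar,\to)$ is strongly connected, so I can concatenate cliques to manufacture a $w$ in which each letter $\alpha\in\Sigma$ appears, and appears ``early enough'' that it sits on top of any occurrence of a dependent letter in $u$. The mirror map $\rev$ and the relations \eqref{eq-upside} will be the right language here: reading $u\cdot w$ upside-down, the bottom layers of $\rev(u\cdot w)$ are governed by $w$, and one wants $\rev(w)$ to pin down the top layers of $u$ exactly.

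The key steps, in order, would be: (1) fix a convenient $w$, for instance a suitable power of a trace that contains every letter, chosen so that in $u \cdot w$ each letter of $u$ is blocked above by a dependent letter coming from $w$; (2) show that for such a $w$, the Cartier-Foata normal form of $u\cdot w$ determines $u$, by arguing that the ``filling cliques'' $\gamma_i$ of Lemma~\ref{prop:1} are all forced to be empty when one compares $u\cdot w$ with $v\cdot w$; (3) translate this rigidity, via the characterization of cylinder intersections in~\eqref{eq:38}, into the disjointness $\up(u\cdot w)\cap\up(v\cdot w)=\emptyset$. For step (3), I would argue by contradiction: if the intersection were non-empty, then $u\cdot w$ and $v\cdot w$ would be compatible, i.e.\ admit a common upper bound $z$, and applying Corollary~\ref{cor:1} together with Lemma~\ref{prop:1} to both $u\cdot w\leq z$ and $v\cdot w\leq z$ would force $u=v$, contradicting $u\neq v$. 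The hypothesis $|u|=|v|$ is what makes the lengths match up so that the comparison is between traces of equal height after appending $w$.

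The hard part will be step (1)--(2): exhibiting an explicit $w$ and proving the rigidity that the normal form of $u\cdot w$ recovers $u$. The difficulty is that $w$ must work \emph{uniformly} for all $u$ simultaneously, so I cannot tailor it to a particular $u$. The cleanest route is probably to show that it suffices to prevent any letter occurring in the top clique of $u$ from being parallel to an entire tail of $w$; since $\Sigma$ is finite and $(\Sigma,D)$ is connected, a $w$ containing each letter sufficiently many times (a high enough power of a fixed full-support trace) will dominate every letter in the dependence sense, so no letter of $u$ can slide up into $w$. I would formalize this using the $i(\alpha)$-type argument already appearing in the proof of Lemma~\ref{lem:1}, which shows exactly how connectedness of $(\Sigma,D)$ lets one block a given letter above a given clique. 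Once the existence of a blocking $w$ is established, the disjointness is a short deduction from the compatibility criterion and cancellativity of the monoid.
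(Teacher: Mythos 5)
Your scaffolding matches the paper's: use connectedness of $(\Sigma,D)$ to build one blocking trace $w$ containing every letter, then argue by contradiction from a common upper bound of $u\cdot w$ and $v\cdot w$ via the compatibility criterion and cancellativity. But the central mechanism you propose is not the one that makes the proof work, and as stated it has a genuine gap. The property you aim for in steps (1)--(2) --- that the Cartier--Foata normal form of $u\cdot w$ determines $u$ because no letter of $u$ can rise into $w$ --- is either trivial (by cancellativity $u\cdot w$ always determines $u$) or beside the point: the difficulty lies entirely in controlling an arbitrary common upper bound $z\geq u\cdot w,\ v\cdot w$, i.e.\ in controlling how letters of $z$ lying \emph{above} $w$ interleave downwards. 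Your plan to force all filling cliques $\gamma_i$ of Lemma~\ref{prop:1} to vanish cannot succeed for the whole decomposition of $u\cdot w$: for indices $i$ near the top, the tail $c_i,\ldots,c_n$ no longer contains every letter, and a genuine extension $z=u\cdot w\cdot\beta$ with $\beta$ parallel to the last clique really does have $\gamma_n\neq0$. The paper's construction supplies exactly what is missing: $w=\alpha_1\cdots\alpha_q\cdots\alpha_1$ is a \emph{palindrome} along a walk in $(\Sigma,D)$ visiting every letter, so that the heap of any $u\cdot w\cdot u'$ narrows to the single piece $\alpha_q$ --- everything coming from $u\cdot\alpha_1\cdots\alpha_{q-1}$ lies strictly below it and everything coming from $\alpha_{q-1}\cdots\alpha_1\cdot u'$ strictly above it. The Cartier--Foata decomposition of $z$ then splits at this pinch point, the two prefixes ending there have equal length because $|u|=|v|$, and cancellativity gives $u=v$. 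A ``high enough power of a full-support trace'' need not have any such single-piece waist (in the pentagon monoid $\M_2$, with $t=a_1\cdot a_3\cdot a_5\cdot a_2\cdot a_4$ every clique of $t^2$, and of every even power of $t$, has two letters), and you offer no substitute for the splitting step.

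A second, smaller misstep: the hypothesis $|u|=|v|$ does not make the heights match after appending $w$ --- in $\M_1$ take $u=a\cdot b$ and $v=a\cdot a$, where already $\height(u)=1\neq2=\height(v)$ and the gap persists in $u\cdot w$ versus $v\cdot w$ --- so the comparison via Corollary~\ref{cor:1} ``between traces of equal height'' is not available. The equal-length hypothesis is used only at the very end, to force the residues $u'$, $v'$ in $u\cdot w\cdot u'=v\cdot w\cdot v'$ (equivalently, the portions of $z$ below the pinch point) to have the same length before cancelling.
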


\begin{proof}
  Let $\M=\M(\Sigma,I)$, and let $D$ be the associated dependence
  relation.  Since $\M$ is assumed to be irreducible, we consider a
  sequence $(\alpha_i)_{1\leq i\leq q}$ with $\alpha_i\in\Sigma$ such
  that:
  \begin{inparaenum}[1)]
    \item every $\alpha\in\Sigma$ occurs at least once in the
      sequence; and
    \item $(\alpha_i,\alpha_{i+1})\in D$ for all $i\in\{1,\ldots,q-1\}$.
  \end{inparaenum}
We introduce the trace
\begin{equation*}
w=\alpha_1\cdot\alpha_2\cdot
\ldots\cdot\alpha_{q-1}\cdot\alpha_q\cdot\alpha_{q-1}\cdot\alpha_{q-2}\cdot
\ldots\cdot\alpha_1\,,
\end{equation*}
and we aim at showing that $w$ satisfies~\eqref{eq:29}.
\begin{itemize}
\item[\itshape Claim $(*)$] For all $u\in\M$, the \emph{first} $q$
  cliques in the Cartier-Foata normal form of $w\cdot u$ are
  $\alpha_1\to\alpha_2\to\ldots\to\alpha_q$\,, and the \emph{last} $q$
  cliques in the Cartier-Foata normal form of $u\cdot w$ are
  $\alpha_q\to\alpha_{q-1}\to\ldots\to\alpha_1$\,.
\end{itemize}

We prove the claim~$(*)$. By construction, the Cartier-Foata
decomposition of $w$ is
\begin{equation*}
  \alpha_1\to\alpha_2\to\ldots\to\alpha_{q-1}\to\alpha_q\to\alpha_{q-1}\to\ldots\to\alpha_2\to\alpha_1\,.
\end{equation*}

Consider the trace $w\cdot u$ for some $u\in\M$. Let $d_1\to\ldots\to
d_p$ be the Cartier-Foata decomposition of~$w\cdot u$\,.  Applying
Lemma~\ref{prop:1} to traces $w$ and $w\cdot u$, we conclude in
particular that $2q-1\leq p$, and for all $i\in\{1,\ldots,q\}$, we
have $d_i=\alpha_i\cdot\gamma_i$ for some clique $\gamma_i\in\C$ such
that
$\gamma_i\indep\alpha_i,\ldots,\alpha_q,\alpha_{q-1},\ldots,\alpha_2,\alpha_1$\,. Since
$\alpha_q,\ldots,\alpha_1$ range over all letters of~$\Sigma$, it
follows that $\gamma_i=0$. So we have proved that $d_1\to\ldots\to
d_q=\alpha_1\to\ldots\to\alpha_q$\,. 

Now for the second part of the claim~$(*)$, consider the trace $u\cdot
w$ for some $u\in\M$. We have $\rev(u\cdot
w)=\rev(w)\cdot\rev(u)=w\cdot\rev(u)$. According to the above, the
Cartier-Foata decomposition of $w\cdot\rev(u)$ starts with
$\alpha_1\to\ldots\to\alpha_q$\,. According to~\eqref{eq-upside}, the
$q$ last cliques $d_1\to\ldots\to d_q$ of $u\cdot w=\rev\bigl(w\cdot
\rev(u)\bigr)$ satisfy:
\begin{align*}
  d_q&\leq \alpha_1\,,&
d_{q-1}\cdot
d_q&\leq\alpha_2\cdot\alpha_1\,,&&\cdots&
d_1\cdot\ldots\cdot d_q&\leq \alpha_q\cdot\ldots\cdot\alpha_1\,.
\end{align*}
Since the $\alpha_i$ are minimal in~$\Cstar$, it follows that
$d_q=\alpha_1\,,\ d_{q-1}=\alpha_2\,,\ldots,d_1=\alpha_q$\,, which
completes the proof of the claim~$(*)$. 

We now come to the proof of~\eqref{eq:29} for~$w$. Let $u,v\in\M$ such
that $|u|=|v|$ and $\up (u\cdot w)\;\cap\up( v\cdot
w)\neq\emptyset$. According to \eqref{eq:38}, it follows that $u\cdot
w$ and $v\cdot w$ are compatible. Hence there are $u',v'\in\M$ such
that $u\cdot w\cdot u'=v\cdot w\cdot v'$\,. Set
$\wbar=\alpha_1\cdot\alpha_2\cdot\ldots\cdot\alpha_{q-1}$\,, so that
$w=\wbar\cdot\alpha_q\cdot\rev(\wbar)$. It follows from the
claim~$(*)$ that the Cartier-Foata decomposition of
$u\cdot\wbar\cdot\alpha_q$ is of the form $c_1\to\ldots\to
c_k\to\alpha_q$\,, and the Cartier-Foata decomposition of
$\alpha_q\cdot\rev(\wbar)\cdot u'$ is of the form $\alpha_q\to
d_1\to\ldots\to d_\ell$\,. Hence the Cartier-Foata decomposition of
$u\cdot w\cdot u'$ is obtained by concatenating the ones of $u\cdot w$
and of~$u'$\,. By the same argument, the Cartier-Foata decomposition
of $v\cdot w\cdot v'$ is obtained by concatenating the ones of $v\cdot
w$ and of~$v'$\,.

Hence, by uniqueness of the Cartier-Foata
decomposition of $u\cdot w\cdot u'=v\cdot w\cdot v'$\,, between the
decompositions of $u'$ and of~$v'$\,, one is a suffix of the other. On
the other hand, $u\cdot w\cdot u'=v\cdot w\cdot v'$ and $|u|=|v|$
imply $|u'|=|v'|$, and therefore $u'=v'$. Since $\M$ is cancellative,
we conclude that $u=v$, which completes the proof.
\end{proof}

\section{Proofs of the main results}
\label{part:proofs-results}

\subsection{From Bernoulli
  measures to Markov chains and \goodname\ valuations}
\label{sec:necess-cond-from}

In this section, we prove Proposition~\ref{prop:9} and
point~\ref{item:8} of Theorem~\ref{thr:2} and point~\ref{item:3} of
Theorem~\ref{thr:1}. The three results correspond to \emph{necessary}
conditions for a probability measure on the boundary of a trace monoid
to be Bernoulli. We start with the two latter points.

The setting is the following: we consider an irreducible trace
monoid $\M=\M(\Sigma,I)$, and we assume that $\pr$ is a Bernoulli
measure defined on $(\B\M,\FFF)$. We consider the valuation
$f:\M\to\RR_+^*$ associated with~$\pr$, defined by $f(u)=\pr(\up u)$
for $u\in\M$, and we let $h:\C\to\RR$ be the M\"obius transform
of~$f$.

We start by proving formula~\eqref{eq:13} in Theorem~\ref{thr:2},
which implies most of the other affirmations. Hence, let $n\geq1$ be
an integer and let $c_1\to\ldots\to c_n$ be $n$ non-empty
cliques. According to formula~\eqref{eq:9} in
Proposition~\ref{prop:3}, one has:
\begin{equation}
\label{eq:20}
\pr(C_1=c_1,\ldots,C_n=c_n)=f(u)-\delta\,,\quad\text{with
}\delta=\pr\Bigl(\bigcup_{\substack{c\in\C\,:\\c>c_n}}\up(v\cdot c)\Bigr)\,,
\end{equation}
where $v=c_1\cdot\ldots\cdot c_{n-1}$ and $u=c_1\cdot\ldots\cdot
c_n$\,. For any $\xi\in\B\M$, one has $\xi\in\up(v\cdot c)$ for some
clique $c>c_n$ if and only if there is a letter $\alpha\indep c_n$
such that $\xi\in\up(v\cdot c_n\cdot\alpha)$. Let
$\{\alpha_1,\ldots,\alpha_r\}$ be an enumeration of those letters
$\alpha\indep c_n$\,. Applying the inclusion-exclusion
principle, we obtain:
\begin{align*}
  \delta&=\sum_{k=1}^r(-1)^{k+1}\sum_{1\leq i_1<\cdots <i_k\leq
    r}\pr\bigl( \up(v\cdot c_n\cdot
  \alpha_{i_1})\cap\cdots\cap\up(v\cdot c_n\cdot\alpha_{i_k})\bigr)\,.
\end{align*}

For $i_1,\ldots,i_k$ indices as in the above sum, 
put
$\gamma=\{\alpha_{i_1},\ldots,\alpha_{i_k}\}$ and $\gamma'= c_n\cdot \gamma$\,. The related
intersection is then 
either empty if $\gamma$ is not a clique, or
equal to $\up(v\cdot c_n\cdot \gamma)= \up(v\cdot \gamma')$.
  if $\gamma$ is a clique, which
 is equivalent to $\gamma'=c_n\cdot\gamma$ being a clique. 
By construction, the cliques $\gamma'$ range over the cliques $c'\in\C$
such that $c'> c_n$\,, and thus, taking into account that $\pr(\up\cdot\,)=f(\,\cdot\,)$ is
multiplicative, we obtain:
\begin{align*}
  \delta&=\sum_{c'\in\C\tq c'>c_n} (-1)^{|c'|-|c|+1}\pr\bigl(\up(v\cdot
  c')\bigr) = f(v) \sum_{c'\in\C\tq c'>c_n} (-1)^{|c'|-|c|+1} f(c')\,.
\end{align*}

Injecting the above  in~\eqref{eq:20}, and
writing $f(u)=f(v)f(c_n)$, we get:
\begin{equation*}
  \pr(C_1=c_1,\ldots,C_n=c_n)=
f(v)\sum_{c'\in\C\tq c'\geq c_n} (-1)^{|c'|-|c|} f(c') 
= f(v)h(c_n)\,,
\end{equation*}
Since $f(v)h(c_n)=f(c_1)\cdots f(c_{n-1})h(c_n)$, we have the desired
result. 

As a particular case for $n=1$, it follows at once that $h\rest\Cstar$
coincides with the probability distribution of~$C_1$\,. Therefore, by
the total probability law, $\sum_{c\in\Cstar}h(c)=1$, and by
Corollary~\ref{cor:3}, $h(0)=0$. It remains only to prove that $h>0$
on $\Cstar$ to obtain that $f$ is a \goodname\ valuation.

For this, let $c\in\Cstar$, and let $c'$ be a maximal clique
in~$\Cstar$. Since $(\Cstar,\to)$ is connected according to
Lemma~\ref{lem:1}, there exists a sequence $c_1,\ldots,c_n$ of cliques such
that $c_1=c$, $c_n=c'$, and $c_i\to c_{i+1}$ holds for all
$i\in\{1,\ldots,n-1\}$. Since $c'$ is maximal, the definition of the
M\"obius transform yields $h(c')=f(c')$, and thus
$\pr(C_1=c_1,\ldots,C_n=c_n)=f(c_1)\cdots f(c_n)>0$. This implies that
$h(c)=\pr(C_1=c)>0$. We have proved that $f$ is a \goodname\
valuation, and completed the proof of point~\ref{item:3} in
Theorem~\ref{thr:1}.

Finally, it remains only to show that $(C_n)_{n\geq1}$ is an aperiodic
and irreducible Markov chain with the specified transition matrix,
since the law of $C_1$ has already been identified
as~$h\rest\Cstar$\,.

From the general formula  proved above, we
derive, if $c_1\to\ldots\to c_n$ holds:
\begin{align*}
  \pr(C_1=c_1,\ldots,C_n=c_n|C_1=c_1,\ldots,C_{n-1}=c_{n-1})&=\frac1{h(c_{n-1})}f(c_{n-1})h(c_n)\,.
\end{align*}

Since $h(0)=0$, it follows from Proposition~\ref{prop:4}, and using
the same notation~$g$, that
$h(c_{n-1})=f(c_{n-1})g(c_{n-1})$\,. Therefore:
\begin{align*}
  \pr(C_1=c_1,\ldots,C_n=c_n|C_1=c_1,\ldots,C_{n-1}=c_{n-1})&=\frac{h(c_n)}{g(c_{n-1})}\,.
\end{align*}

Since the latter quantity only depends on $(c_{n-1},c_n)$, it follows
that $(C_n)_{n\geq1}$ is a Markov chain with the transition matrix
described in the statement of point~\ref{item:8} of
Theorem~\ref{thr:2}. 

The chain is irreducible since $(\Cstar,\to)$ is connected, as already
observed. And it is aperiodic since $c\to c$ holds for any
$c\in\Cstar$. The proof is complete.

\paragraph{Proof of Proposition~\ref{prop:9}.}
\label{sec:proof-proposition-1}

Consider any measure of finite mass on~$\B\M$. Let $u\in\M$ be any
trace. The elementary cylinder $\up u$ writes as the union: $\up
u=\bigcup_{\alpha\in\Sigma}\up(u\cdot\alpha)$. Applying
inclusion-exclusion principle as above yields the expected
formula~\eqref{eq:26}.

\subsection{From \goodname\ valuations to Bernoulli measures, through
  Markov chains}
\label{sec:suff-cond-from}

In this section, we consider a trace monoid $\M$ equipped with a
\goodname\ valuation $f:\M\to\RR_+^*$\,, and we establish the
existence and uniqueness of a probability measure on $(\B\M,\FFF)$
such that $f(\,\cdot\,)=\pr(\up\,\cdot\,)$. This corresponds to the
proof of point~\ref{item:4} of Theorem~\ref{thr:1} and of
point~\ref{item:9} of Theorem~\ref{thr:2}.

It must be noted that we do not use the irreducibility of~$\M$ in this
part of the proof.

\medskip

The uniqueness of $\pr$ follows from the remark made in
\S~\ref{sec:finite-meas-bound} that elementary cylinders form a
$\pi$-system generating~$\FFF$.

For proving the existence of~$\pr$, we proceed by considering first
the Markov chain on the Cartier-Foata subshift which is necessarily
induced by~$\pr$, if it exists (even though it was only established
for an irreducible trace monoid). Let $h:\C\to\RR$ be the M\"obius
transform of the \goodname\ valuation~$f$. By assumption, $h(0)=0$,
and therefore, thanks to Corollary~\ref{cor:3},
$\sum_{c\in\Cstar}h(c)=1$. Since $h>0$ on~$\Cstar$ by assumption, it
follows that $h\rest\Cstar$ defines a probability distribution
on~$\Cstar$.

Furthermore, the normalization factor defined by
\begin{equation*}
  g(c)=\sum_{c'\in\Cstar\tq c\to c'}h(c')
\end{equation*}
is non-zero on~$\Cstar$. Hence the stochastic matrix
$P=(P_{c,c'})_{(c,c')\in\Cstar\times\Cstar}$ is well defined by
\begin{equation*}
  P_{c,c'}=
\begin{cases}
h(c')/g(c),&\text{if $c\to c'$} \\
  0,&\text{if $\neg(c\to c')$}
\end{cases} \:.
\end{equation*}

Let $\prq$ be the probability measure on the space
$(\Omega,\GGG)$, corresponding to the law of the
Markov chain on $\Cstar$ with $h\rest\Cstar$ as initial measure
and with $P$ as transition matrix. Let finally $\pr$ be the
probability measure on $(\B\M,\FFF)$ associated with~$\prq$. Then we
claim that $\pr(\up u)=f(u)$ holds for all traces $u\in\M$.

First, we observe that, for any integer $n\geq1$ and any sequence of
cliques $\delta_1\to\dots\to\delta_n$\,, the following identity holds:
\begin{equation}
  \label{eq:46}
  \pr(C_1=\delta_1,\ldots,C_n=\delta_n)=f(\delta_1)\cdots f(\delta_{n-1})h(\delta_n)\,.
\end{equation}

Indeed, $h(0)=0$ by assumption, and this implies $h=fg$ on $\C$
according to Proposition~\ref{prop:4}. Using the form of the
transition matrix $P$ and the definition of the initial law of the
chain $(C_n)_{n\geq1}$\,, we have thus:
\begin{equation*}
  \pr(C_1=\delta_1,\ldots,C_n=\delta_n)=
h(\delta_1)\frac{h(\delta_2)}{g(\delta_1)}\dots\frac{h(\delta_n)}{g(\delta_n)}=
f(\delta_1)\cdots f(\delta_{n-1})h(\delta_n)\,,
\end{equation*}
which proves~\eqref{eq:46}.  We recognize the generalized form of the
M\"obius transform introduced in~\eqref{eq:42} for the valuation~$f$,
and obtain thus:
\begin{equation}
  \label{eq:47}
  \pr(C_1=\delta_1,\ldots,C_n=\delta_n)=h(\delta_1\cdot\ldots\cdot\delta_n)\,.
\end{equation}

We now prove $\pr(\up u)=f(u)$ for $u\in\M$.  Trivially, $\pr(\up
0)=f(0)=1$. Let $u$ be a non-empty trace, and let $n=\height(u)$ be
the height of~$u$. It follows from~\eqref{eq:8}
stated in Proposition~\ref{prop:3} that we have:
\begin{equation}
\label{eq:21}
 \pr(\up u)=\pr(C_1\cdot\ldots\cdot C_n\geq u)\,.
\end{equation}

The random trace $C_1\cdot\ldots\cdot C_n$ ranges over traces of
height~$n$. Combining~\eqref{eq:47} and~\eqref{eq:21} yields thus:
\begin{equation*}
  \pr(\up u)=\sum_{u'\in\M\tq \height(u')=\height(u),\;u'\geq u}h(u')\,.
\end{equation*}

By Proposition~\ref{thr:3}, we deduce that $\pr(\up u)=f(u)$, as claimed.
This completes the proofs of point~\ref{item:4} of Theorem~\ref{thr:1}
and of point~\ref{item:9} of Theorem~\ref{thr:2}.

\subsection{Uniform measures: existence and uniqueness}
\label{sec:unif-meas-exist}

This section is devoted to the proof of Theorem~\ref{thr:6} and of
Proposition~\ref{prop:7}.

We consider an irreducible trace monoid $\M=\M(\Sigma,I)$, and we let
$p_0$ be the unique root of smallest modulus of~$\mu_\M$\,, which is
well defined according to Theorem~\ref{thr:5}. Let also
$f_0(u)=p_0^{|u|}$ be the uniform valuation associated to~$p_0$\,.

\paragraph{Existence of a uniform Bernoulli measure.}
\label{sec:existence}

We aim at applying Theorem~\ref{thr:1} to obtain the existence of a
probability measure $\pr$ on $\B\M$ such that $\pr(\up
\cdot\,)=f_0(\,\cdot\,)$ on~$\M$\,.

Accordingly, we only have to check that the uniform valuation $f_0$ is
a \goodname\ valuation. As already noted
in~\S~\ref{sec:uniform-measures}, if $h:\C\to\RR$ is the M\"obius
transform of~$f_0$\,, the condition $h(0)=0$ is equivalent to $p_0$
being a root of~$\mu_\M$\,, which is fulfilled. According to the
equivalence stated in Definition~\ref{def:3}, the condition $h>0$ on
$\Cstar$ amounts to check that $\muc(p_0)>0$ for all $c\in\Cstar$, and
this derives from Proposition~\ref{cor:5}, since $\M$ is assumed to be
irreducible. Hence $f_0$ is indeed a \goodname\ valuation, which
implies the existence of the desired probability measure.

\paragraph{Uniqueness of the uniform measure.}
\label{sec:uniqueness}

The uniqueness of uniform probability measures entails the uniqueness
of Bernoulli uniform measures, hence we restrict ourselves to proving
the following: if $(\gamma_n)_{n\geq0}$ is a sequence of real numbers
such that
\begin{equation}
\label{eq:3}
\forall n\geq0\quad  \forall u\in\M\quad|u|=n\implies
\pr(\up u)=\gamma_n\,,
\end{equation}
then $\gamma_n=p_0^n$ for all $n\geq0$.

Let $\lambda_n=\lambda_\M(n)$ denote the number of traces of length $n$ in $\M$ for
all integer $n\geq 0$. Consider the following two generating series:
\begin{align}\label{eq-StildeS}
G(X) &= \sum_{n\geq0} \lambda_n X^n\,,& S(X) &= \sum_{n\geq0} \gamma_n X^n \,.
\end{align}

According to Theorem~\ref{thr:5} point~\ref{item:5}, we have
$G(X)=1/\mu_\M(X)$ where $\mu_{\M}(X)$ is the M\"obius polynomial
of~$\M$. By developing  $G(X) \mu_{\M}(X)$, we obtain in
particular:
\begin{equation}\label{eq-rec1}
\forall n \geq \max_{c\in\C} |c| \qquad \sum_{c\in \C} (-1)^{|c|}
\lambda_{n - |c|} = 0 \:.
\end{equation}

Now let us turn our attention to~$S(X)$. According to
Proposition~\ref{prop:9}, we have: $\sum_{c\in \C}(-1)^{|c|}
\pr\bigl(\up( u\cdot c)\bigr) =0$ for
all $u\in \M$. Using~\eqref{eq:3}, it translates as:
\begin{equation}
\label{eq-rec2}
\forall n \geq 0 \qquad \sum_{c\in \C} (-1)^{|c|}
\gamma_{n + |c|} = 0 \:.
\end{equation}
In view of~\eqref{eq-rec1} and~\eqref{eq-rec2}, we are steered to
consider $G(X)$ and $S(X)$ as being sort of dual.  We are going to
build upon this.

\medskip

Equation~\eqref{eq-rec2} can be rewritten as $\gamma_n= \sum_{c\in
  \Cstar}(-1)^{|c|+1} \gamma_{n+|c|}$. By injecting this identity
in~$S(X)$, we get
\begin{align*}
S(X)  & \ = \ \sum_{n\geq0} \ \Bigl( \  \sum_{c\in \Cstar}
(-1)^{|c|+1} \gamma_{n+|c|} \Bigr) \ X^n \\
 & \ = \ \sum_{c\in \Cstar} \ (-1)^{|c|+1} X^{-|c|} \Bigl( S(X) -
 \sum_{i=0}^{|c|-1} \gamma_i \ X^i \Bigr) \:.
\end{align*} 
Collecting the different terms involving~$S(X)$, we recognize the
coefficients of the M\"obius polynomial $\mu_\M(X)$ and obtain:
\begin{equation*}
  S(X)\mu_\M(1/X)=\sum_{c\in\Cstar}(-1)^{|c|}X^{-|c|}\,\Bigl(\;\sum_{i=0}^{|c|-1}\gamma_iX^i\;\Bigr)\,.
\end{equation*}
Note that this proves already that $S(X)$ is rational.

Set $\ell=\max_{c\in \C} |c|$. Then $\mu_\M(X)$ is a polynomial of
degree~$\ell$. Let $p_0,\ldots,p_{\ell-1}$ be the roots of~$\mu_\M(X)$\,,
with $p_0<|p_1|\leq|p_2|\leq\ldots\leq|p_{\ell-1}|$\,. Denoting by
``$\propto$'' the proportionality relation, we have 
$\mu_\M(1/X)\propto X^{-\ell}(1-p_0X)\cdots(1-p_{\ell-1}X)$\,, which yields:
\begin{gather}
  \label{eq:35}
  S(X)\propto\frac{P(X)}{(1-p_0X)\cdots(1-p_{\ell-1}X)}\propto\frac{P(X)}{(X-1/{p_0})\cdots
    (X-1/{p_{\ell-1}})}\,,\\
\notag
P(X)=\sum_{c\in\Cstar}(-1)^{|c|}X^{\ell-|c|}\sum_{i=0}^{|c|-1}\gamma_iX^i\,.
\end{gather}
We observe that $P$ is a polynomial of degree at most $\ell-1$.

Let $w$ be a trace as in the hat
lemma~\ref{lem:7}, that is, satisfying~\eqref{eq:29}. 
Set $|w|=q$. Define, for all integers $n \geq q$, the set $\D_n
= \{ u\cdot w \mid u \in \M_{n-q} \}$ where $\M_k=\{u\in\M\tq |u|=k\}$
for all integers $k\geq0$.  Observe that $\D_n \subseteq \M_n$ and, by
cancellativity of the trace monoid~$\M$, that $\D_n$~is in bijection
with~$\M_{n-q}$\,. Hence:
\begin{equation}
\label{eq:36}
|\D_n| = |\M_{n-q}| \sim_{n\to
  \infty} C_1\,(1/p_0)^n\:,
\end{equation}
for some constant $C_1>0$, according to Theorem~\ref{thr:5}
point~\ref{item:7}.  The cylinders $\up u$ for $u$ ranging over $\D_n$
are disjoint by construction of~$w$, we have thus \mbox{$\sum_{u \in {\D}_n}
\pr( \up u) \leq 1$}. But, according to~\eqref{eq:3}, we have $\sum_{u \in \D_n}
\pr( \up u) = |\D_n| \gamma_n$\,. So we get $|{\mathcal D}_n| \cdot
\gamma_n \leq 1$. Using~\eqref{eq:36}, we obtain
\begin{equation}
\label{eq:33}
 \forall n \geq 0 \qquad  \gamma_n \leq C_2\,
p_0^n\:,
\end{equation}
for some constant $C_2>0$.

Returning to the expression~\eqref{eq:35} for~$S$, the roots of the
denominator are:
$1/|p_{\ell-1}|\leq1/|p_{\ell-2}|\leq\cdots\leq1/|p_1|<1/p_0$\,. Hence,
would any of the roots $1/p_j$ with $j>0$ not be a root of the
numerator~$P$, that would prevent~\eqref{eq:33} to hold. Since $P$ is
of degree at most $\ell-1$, we deduce that $1/p_{\ell-1},\ldots,1/p_1$
are exactly all the roots of~$P$, and~\eqref{eq:35} rewrites as $S(X)=
K/(1-p_0X)$ for some constant $K\neq0$. Evaluating both members at
$X=0$ yields $K=1$ since $\gamma_0=1$, and thus
$S(X)=1/(1-p_0X)$\,. Since $S(X)=\sum_{n\geq0}\gamma_nX^n$ by
definition, we obtain that $\gamma_n=p_0^n$ for all $n\geq0$, and the
proof is complete.

\paragraph{Proof of Proposition~\ref{prop:7}.}

Let $p_0$ be the unique root of smallest modulus of the M\"obius
polynomial. Start with the valuation $f$ defined by
$f(\alpha)=p_0$ for all $\alpha$ in~$\Sigma$. By Theorem~\ref{thr:6},
$f$~is a \goodname\ valuation.

Now consider a collection of reals
$\varepsilon=(\varepsilon_{\alpha})_{\alpha \in \Sigma}$ such that
$p_0+\varepsilon_{\alpha} \in (0,1)$ for all $\alpha\in \Sigma$.  Let
$f_{\varepsilon}$ be the valuation defined by:
$f_{\varepsilon}(\alpha) = p_0+\varepsilon_{\alpha}$ for all
$\alpha\in \Sigma$. Let $h_{\varepsilon}$ be the associated M\"obius
transform. The goal is to show that there exist a continuous family of
values for $(\varepsilon_{\alpha})_{\alpha \in \Sigma}$ such that
$f_{\varepsilon}$ is \goodname, that is:
\begin{align*}
(i) \ h_\varepsilon(0) &=0\,,& (ii) \ \forall
c\in\Cstar\quad h_{\varepsilon}(c)&>0 \,.
\end{align*}

First, observe that condition $(ii)$ is an open condition and that it
is satisfied for $\varepsilon=0$\,. So it is still satisfied if
$|\varepsilon_{\alpha}|$ is small enough, for all $\alpha \in\Sigma$.

Now let us concentrate on~$(i)$. Fix a letter $a\in \Sigma$. The
equation $h_\varepsilon(0) =0$ is an affine equation in
$\varepsilon_a$ if the values $\varepsilon_\alpha$ for $\alpha\neq a$
are fixed:
\begin{equation*}
  \label{eq:27}
  (p_0+\varepsilon_a) A_\varepsilon+B_\varepsilon=0\,,
\end{equation*}
with 
\begin{equation*}
A_\varepsilon=\sum_{c\in \C \tq a\in c} (-1)^{|c|}
   \prod_{\alpha \in c, \alpha\neq a}(p_0+\varepsilon_{\alpha})\,,\quad
B_\varepsilon=\sum_{c\in\C\tq a\notin c}(-1)^{|c|}\prod_{\alpha \in c}(p_0+\varepsilon_{\alpha})\,.
\end{equation*}
Observe that we have:
\begin{equation*}
A_0=\sum_{c\in\C\tq a\in c}(-1)^{|c|}p_0^{|c|-1}\,,\quad B_0=\sum_{c\in\C\tq a\notin c}(-1)^{|c|}p_0^{|c|}\,.
\end{equation*}
We recognize in $B_0$ the M\"obius polynomial of the
independence pair $(\Sigma',I')$, with $\Sigma'=\Sigma\setminus\{a\}$
and $I'=I\cap(\Sigma'\times\Sigma')$, evaluated at~$p_0$\,. But $\M$
is irreducible, and as already observed in the proof of
Proposition~\ref{cor:5}, the comparison of growth rates entails that
$p_0$ is strictly smaller in modulus than all the roots of the
polynomial~$\mu_{\M(\Sigma',I')}$\,. Hence $B_0\neq0$. Since
$p_0A_0+B_0=0$\,, we conclude that $A_0\neq0$ and thus
$A_\varepsilon\neq0$ for $\varepsilon$ small enough. Consequently, the
equation $(p_0+\varepsilon_a) A_\varepsilon+B_\varepsilon=0$ has a unique
solution in $\varepsilon_a$ if all $\varepsilon_\alpha$ are small
enough for $\alpha\neq a$. Since $|\Sigma|>1$ by assumption, there is
indeed an uncountable number of values for
$(\varepsilon_\alpha)_{\alpha\neq a}$ arbitrarily close to~$0$.

We have proved the existence of a continuous family of distinct
\goodname\ valuations, and we conclude by Theorem~\ref{thr:1}.

\section{Conclusion and perspectives}
\label{sec:persp-future-work}

The paper has introduced and characterized Bernoulli measures on
irreducible trace monoids, interpreted as a probabilistic model of
concurrent systems with a memoryless property. The combinatorics of
trace monoids plays a central role in the characterization of
Bernoulli measures, and in particular the notion of M\"obius transform
and of M\"obius polynomial. The existence and uniqueness of uniform
measures has been established, as well as the fact that they belong to
the class of Bernoulli measures. A realization result allows for
effective sampling of Bernoulli measures, paving the way for future
applications.

The extension to non-\pirreducible\ trace monoid works out nicely and
the complete description of Bernoulli measures is postponed to a
future work.

Further developments of this work can be expected.  First, it is
natural to adapt our construction to trace groups.  It would also be
interesting to generalize our approach to other monoids or
groups. Braid monoids and groups, and more generally Artin monoids and
groups of finite Coxeter type, are natural candidates.

Another extension consists in studying Markovian measures on infinite
traces instead of Bernoulli measures.  Applications to the
construction of Markovian measures for the executions of $1$-bounded
Petri nets are expected. In this case, the executions are described as
a regular trace language.

\printbibliography

\end{document}